\newcommand{\Cdb}{\mbox{$\mathbb{C}$}}
\newcommand{\Ndb}{\mbox{$\mathbb{N}$}}
\newcommand{\Rdb}{\mbox{$\mathbb{R}$}}
\newcommand{\A}{\mbox{${\mathcal A}$}}
\newcommand{\E}{\mbox{${\mathcal E}$}}
\newcommand{\F}{\mbox{${\mathcal F}$}}
\renewcommand{\H}{\mbox{${\mathcal H}$}}
\newcommand{\K}{\mbox{${\mathcal K}$}}
\renewcommand{\S}{\mbox{${\mathcal S}$}}
\newcommand{\Tr}{\text{Tr}}
\newcommand{\La}{\Lambda}
\newcommand{\Ran}{\text{Ran}}
\newcommand{\simp}{\underset{\geq 0}{\sim}}
\newtheorem{theorem}{Theorem}[section]
\newtheorem{lemma}[theorem]{Lemma}
\newtheorem{proposition}[theorem]{Proposition}
\newtheorem{definition}[theorem]{Definition}
\theoremstyle{remark}
\newtheorem{remark}[theorem]{\bf Remark}
\theoremstyle{definition}
\numberwithin{equation}{section}
\begin{document}

\title[]{Positive contractive projections in Schatten Spaces}

\author{Estelle Boffy}
\address{Laboratoire de Math\'ematiques, Universit\'e Marie et Louis Pasteur
, 25030 Besan\c con Cedex, France}
\email{estelle.boffy@math.cnrs.fr}

\date{\today}

\begin{abstract} We characterize the positively 1-complemented subspaces of $S^p$, for $1\leq p<\infty$, where $S^p$ denotes the Schatten spaces. Building on the work of Arazy and Friedman, who described the 1-complemented subspaces of $S^p$, for $1\leq p\neq 2  <\infty$, we establish that there are five mutually distinct types of indecomposable positively 1-complemented subspaces in $S^p$. Moreover, every positively 1-complemented subspace of $S^p$ can be expressed as a direct sum of some of these indecomposable subspaces. The proof first uses the Arazy-Friedman result to identify which 1-complemented subspaces of $S^p$, for $1\leq p\neq 2<\infty$, are positively 1-complemented. Then, in a second step, it treats the special case $p=2$ by relating it to the previously studied case $p\neq 2$.
\end{abstract}

\maketitle

{\it Keywords: Projections, complemented subspaces, Schatten spaces, positive maps.

 2020 Mathematics subject classification: Primary 47B10, 46B28.}

\section{Introduction}\label{Intro}
Historically, the study of contractive projections on Banach spaces and the structure of their ranges have been a central theme in the geometry of these spaces. For a comprehensive account of the existing literature on this topic, see \cite{Randri}. The study on classical (i.e., commutative) $L^p$ spaces was conducted by Douglas (1965),  Ando (1966) and later by Bernau and Lacey (1974); see \cite{Abra}, \cite{Ando}, \cite{Bernau}, and \cite{Douglas}. In particular, Ando provided a complete description of all contractive projections $P:L^p(\Omega,\mu)\to L^p(\Omega,\mu)$, where $(\Omega,\mu)$ is a probability space, and $1\leq p\neq 2<\infty$. Note that when $p=2$, contractive projections coincide with orthogonal projections. However, Ando proved that if one further assumes that a contractive projection $P:L^2(\Omega,\mu)\to L^2(\Omega,\mu)$ is \textit{positive}, then the description obtained for $p\neq 2$ remains valid.

In the setting of Schatten spaces, for $H$ and $K$ some separable Hilbert spaces, an explicit description of the structure of 1-complemented subspaces of $S^p(H,K)$ for $p\neq 2$ is given by Arazy and Friedman in 1978 and 1992 in \cite{AF2}, and \cite{AF3}. Recall that we denote by $S^p(H,K)$ (or simply $S^p$) the Schatten space of all bounded operator $x : H\to K$ such that $\Tr(|x|^p)<\infty$. A closed subspace $X$ of the space $S^p(H,K)$ is said to be 1-complemented if it is the range of a contractive projection $P: S^p(H,K)\to S^p(H,K)$. According to \cite[Proposition 1.2, p.8]{AF3}, for $1<p<\infty$, this projection is necessarily unique. Therefore, the characterization of contractive projections is equivalent to that of 1-complemented subspaces in this case. Building on the Arazy and Friedman description, Le Merdy, Ricard and Roydor proved in \cite{MRR} (2009) that, for $1\leq p\neq 2<\infty$, the completely 1-complemented subspaces of $S^p$ are exactly the subspaces that are completely isometric to an $\ell^p-$direct sum of subspaces of the form $S^p(H,K)$.

On general noncommutative $L^p$ spaces, defined from a von Neumann algebra equipped with a normal semifinite faithful (nsf) trace, there is no description of contractive projections. Nevertheless, Arhancet and Raynaud, in \cite{AR} (2024), established that the range of a 2-positive contractive projection on a noncommutative $L^p$ space is completely order and completely isometrically isomorphic to a noncommutative $L^p$ space. Their approach is different from the methods of Arazy and Friedman. The structure of positively 1-complemented subspaces of noncommutative $L^p$ spaces is investigated in \cite{A} by Arhancet. It is shown that, in a wide range of cases, the range of a positive contractive projection is isometric to a nonassociative $L^p$ space associated with a $JW^*$-algebra. The same author studied the range of contractively decomposable projections on noncommutative $L^p$ spaces in \cite{A_dec} (2024).

The goal of the present paper is to obtain an explicit description of all positively 1-complemented subspaces of $S^p(H):=S^p(H,H)$, that is, the subspaces of $S^p(H)$ that are the range of a contractive and positive projection $P : S^p(H)\to S^p(H)$. Although the Schatten spaces $S^p(H)$ form a particular class of noncommutative $L^p$ spaces, this description does not follow from \cite{A} and is specific to the space $S^p(H)$.

 All Hilbert spaces considered in this article are assumed to be separable. The main theorem of this paper is Theorem \ref{THM1}, which describes the positively 1-complemented subspaces of $S^p(H)$, for $1\leq p<\infty$. The proof proceeds in two steps. We first prove the theorem in the case $p\neq 2$, relying on the description of 1-complemented subspaces of $S^p(H)$ due to Arazy and Friedman, and adding the assumption that the projection is positive. For the case $p=2$, the idea is to reduce to the previous step. 

\subsection*{Structure of the paper}

\begin{itemize}
\item \textbf{Section \ref{sectionDescr 1compl}} gives a brief overview of the essential notions and notations required for the rest of the paper. The description of 1-complemented subspaces of $S^p$, for $p\neq 2$, is reviewed in Theorem \ref{thmAF}. We use the work of Le Merdy, Ricard, and Roydor \cite{MRR} (see Theorem 2.8 and Section 5), which clarifies the description originally due to Arazy and Friedman. A relevant notion of equivalent subspace introduced in \cite{MRR} is deepened. 

\item \textbf{Section 3} is devoted to $JC^*$-triples. We review basic properties of $JC^*$-triples, relying on \cite{H2} and on \cite{H}; and highlight important properties that will be used in the rest of the paper. 

\item \textbf{Section 4} begins with a review of the results on the support projection of a positive contractive projection that appear in \cite{AR}. Lemma \ref{decpos1compl} allows us to restrict the study of positively 1-complemented subspaces to the study of \textit{indecomposable} positively 1-complemented subspaces. Then, we state the main result of the paper: Theorem \ref{THM1}, which presents a classification of \textit{positively} 1-complemented subspaces of $S^p(H)$. 

\item\textbf{Section 5} gives the proof of the case $p\neq 2$ of Theorem \ref{THM1}, relying on the classification of contractive projections by Arazy and Friedman, and determining, for each type, the conditions to have a positively 1-complemented subspace. The case $p=1$ is discussed at the end of the section.

\item \textbf{Section 6} provides the proof of Theorem \ref{THM1} in the case $p=2$, based on the case $p\neq 2$ of the same theorem. Note that the case of $S^2(H)$ is particular. In this space, the contractive projections are the orthogonal projections, but one may consider the case of positively 1-complemented subspaces of $S^2(H)$, even though the Arazy-Friedman theorem no longer applies. To a positively 1-complemented subspace $X$ of $S^2(H)$, we associate some other positively 1-complemented subspace $X_q$ of $S^q(H)$, for $q\neq 2$. We use the result established in the case $q\neq 2$ to describe $X_q$, and we exploit the link between the subspaces $X_q$ and $X$ to obtain a description of the original subspace $X$.

\item \textbf{Section 7} is an appendix that proves the equivalence between certain 1-complemented subspaces appearing in the classification by Arazy and Friedman, and subspaces appearing in the framework of Le Merdy, Ricard, and Roydor.
\end{itemize}

\section{Description of 1-complemented subspaces and background}\label{sectionDescr 1compl}

To understand the description of 1-complemented subspaces, we need to define the notations and definitions used throughout the paper. 
We assume that the reader is familiar with Schatten spaces $S^p(H,K)$, for which we refer e.g. to \cite{McCarthy}.

We use the natural embedding for tensor products. Let $H_1,$ $K_1$, $H_2$ and $K_2$ be Hilbert spaces. Then for $1\leq p<\infty$, we have \[S^p(H_1,K_1)\otimes S^p(H_2,K_2) \subset S^p\left(H_1\overset{2}{\otimes}H_2,  K_1\overset{2}{\otimes}K_2\right)\] where $\overset{2}{\otimes}$ denotes the Hilbertian tensor product. The left-hand side of this inclusion is a dense subspace of the right-hand side. For any $a\in S^p(H_1,K_1),~b\in S^p(H_2,K_2)$, $\|a\otimes b\|_p = \|a\|_p\|b\|_p$. Note that if one of the spaces $S^p(H_i,K_i)$, $i=1,2$ is finite-dimensional, then the inclusion becomes an equality. Also observe that if  $a\in S^p(H)$ and $b\in S^p(K)$  are two positive operators, then $a\otimes b$ is positive. Finally, the space $\ell^2_I\overset{2}{\otimes}H_1$ can be naturally identified with $\ell^2_I(H_1)$.

For $1\leq p<\infty$, and a sequence of Banach spaces $(X_i)_i$, we define the $p-$direct sum \[\bigoplus^p_i X_i := \{(x_i)_i,\quad x_i\in X_i,~\sum_i \|x_i\|^p<\infty\},\] endowed with the norm \[\|(x_i)_i\| := \left(\sum_i \|x_i\|^p\right)^{\frac{1}{p}},\] it is a Banach space. If $p=\infty$, we set \(\bigoplus\limits^\infty_i X_i=\{(x_i)_i,\quad x_i\in X_i,~(\|x_i\|)_i \text{ bounded}\}\), endowed with the norm \[\|(x_i)_i\| := \text{sup}_i\|x_i\|.\]
For $(H_i)_i$, and $(K_i)_i$ families of Hilbert spaces, we have the embedding \[ \bigoplus^p_i S^p(H_i,K_i) \subset S^p\left(\bigoplus^2_i H_i, \bigoplus^2_i K_i\right),\] defining an element $x = (x_i)_i\in\bigoplus\limits^p_i S^p(H_i,K_i) $ as \[x((h_i)_i) := (x_i(h_i))_i,\quad \text{for }(h_i)_i\in \bigoplus\limits^2_i H_i.\] Observe that this embedding determines a positive cone on \(\bigoplus\limits^p_i S^p(H_i,K_i)\). We note that an element $(x_i)_i \in \bigoplus\limits^p_i S^p(H_i,K_i)$ is positive if and only if each $x_i$ is positive.

Throughout the article, we use $1$ or $\operatorname{Id}_X$ to denote the identity operator on a space $X$. When using the notation $1$, the space on which the identity operator acts will be clear from the context.

In section 4, results about duality of Schatten spaces will be used. For $1\leq p< \infty$, and $q$ such that $\dfrac{1}{p} + \dfrac{1}{q} = 1$, we have an isometric isomorphism $S^q(H) \simeq S^p(H)^*$ through the map \[x\in S^q(H)\mapsto \left[y\in S^p(H)\mapsto \Tr(xy)\right],\] and we use the notation \[\langle x , y\rangle:= \Tr(xy),\quad x\in S^q(H),~y\in S^p(H)\] to denote the duality brackets. Note that we use the convention $S^\infty(H,K):=B(H,K)$. When there is a possible ambiguity about the Hilbert, we will denote by $\Tr_H$ the usual trace on $S^1(H).$ 

For the following lemma, note that using the identification \(S^p(\ell^2_n)\otimes S^p(H) = S^p(\ell^2_n(H)),\) we have a positive cone on the tensor product $S^p(\ell^2_n)\otimes S^p(H)$. We use the notation $S^p_n := S^p(\ell^2_n)$.
\begin{lemma}\label{linear_form_cp}
Let $H$ be a Hilbert space, let $1\leq p< \infty$, and let $\varphi : S^p(H)\to \Cdb$ be a positive linear form. Then for every $n\in\Ndb$, \begin{enumerate}
    \item  The map $\operatorname{Id}_{S^p_n}\otimes \varphi$ is positive.
    \item $\|\operatorname{Id}_{S^p_n}\otimes \varphi : S^p(\ell^2_n)\otimes S^p(H)\to S^p(\ell^2_n)\|\leq \|\varphi\|.$
\end{enumerate}
\end{lemma}
\begin{proof}
This is a particular case of \cite[Proposition 2.3, p.19 and Theorem 2.19, p.18]{Arh_Krieg}.

\end{proof}
\begin{remark}\label{Totimesphi}
\begin{enumerate}
\item The previous lemma can also be proved by elementary means.
    \item In the setting of the previous lemma, the first point implies that for any Hilbert space $K$, the map $\operatorname{Id}_{S^p(K)}\otimes\varphi$ extends to a bounded map from $S^p\left(K\overset{2}{\otimes}H\right)$ to $S^p(K)$. We again denote by $\operatorname{Id}_{S^p(K)}\otimes\varphi$ its extension. This means that $\varphi$ is completely bounded (see \cite{Pisier} for more details). 
    The second point implies that the extended map $\operatorname{Id}_{S^p(K)}\otimes\varphi$ is also positive. This means that $\varphi$ is completely positive.
    \item The previous lemma implies that if $\varphi : S^p(H)\to \Cdb$ is a contractive and positive linear form, and $T : S^p(K)\to S^p(K)$ is a contractive and positive map, then the operator $T\otimes \varphi : S^p(K)\otimes S^p(H)\to S^p(K)$ extends to a positive contractive map from $S^p(H\overset{2}{\otimes}K)$ to $S^p(K)$. This follows from the identity \(T\otimes\varphi = T\circ (\operatorname{Id}_{S^p(K)}\otimes\varphi).\) 
\end{enumerate}

\end{remark}

In the following, we give the description of 1-complemented subspaces; see Arazy-Friedman \cite{AF3}, and Le Merdy, Ricard and Roydor \cite{MRR}. The following is a brief summary of \cite[Sections 2 and 5]{MRR}.
To state the description, we use the following equivalence relation, introduced in \cite{MRR}.
Let $H,H',K,K'$ be Hilbert spaces, and $1\leq p < \infty$.
We say that two closed subspaces $X\subset S^p(H,K)$ and $Y\subset S^p(H',K')$ are equivalent, denoted \[X\sim Y,\]
if there exist partial isometries $U:H'\to H$ and $V : K'\to K$ such that

\begin{equation}\label{defequiv}
    X = VYU^* \mbox{  and   } Y=V^*XU.
\end{equation}
We prove that it is an equivalence relation in Remark \ref{rel_equiv}.
It is straightforward to verify that if $X\subset S^p(H,K)$ and $Y\subset S^p(H',K')$ are equivalent, and $P : S^p(H,K)\to S^p(H,K)$ is a contractive projection with range equal to $X$, then the map \begin{equation}\label{proj_equiv_spaces}
    Q : S^p(H', K') \to S^p(H', K'), \quad Q(y) = V^* P(VyU^*) U
\end{equation} is a contractive projection with range equal to $Y$.

Let $x,y\in S^p(H,K)$. We say that $x$ and $y$ are \textit{disjoint} operators if \[x^*y = 0 \quad \mbox{ and } \quad xy^* = 0.\]
This terminology differs from the one used in \cite{AF2} and \cite{MRR} where such elements are called orthogonal. It has been changed here to avoid confusion when working on $S^2(H)$. Note that if $x,y\in S^p(H)$ are disjoint, then \begin{equation}\label{relation_norm_disjoint}
    \|x+y\|_p^p = \|x\|^p_p + \|y\|^p_p.
\end{equation}

Two subspaces $X_1,X_2$ of the space $S^p(H,K)$ are called \textit{operator-disjoint} if every pair $(x_1,x_2)\in X_1\times X_2$, consists of disjoint  operators. In this case, the space $X_1\oplus X_2 \subset S^p(H,K)$ coincide with the space $X_1\overset{p}{\oplus}X_2$. Here, we shall not distinguish between the internal and external direct sum. Note that for $1\leq p\neq 2<\infty$, if the identity (\ref{relation_norm_disjoint}) holds for every pair $(x,y)\in X_1\times X_2$, then the subspaces $X_1$ and $X_2$ are operator-disjoint. This follows the equality case in the Clarkson inequality (see \cite[Theorem 2.7, p.261]{McCarthy}).

A subspace $X$ of the space $S^p(H,K)$ is said to be \textit{indecomposable} if it cannot be written as the $p$-sum of two nontrivial operator-disjoint subspaces.

For \( 1 < p < \infty \), it follows from \cite{AF3} that every subspace \( X\) of the space \(S^p(H, K) \) admits a direct sum decomposition: \[X = \bigoplus^p_{\alpha} X_\alpha,\] where each \( X_\alpha \) is indecomposable and the subspaces \( X_\alpha \) are pairwise operator-disjoint. Note that such a decomposition is unique up to the ordering of the family $(X_\alpha)_\alpha$. Moreover, $X$ is 1-complemented if and only if every subspace $X_\alpha$ is 1-complemented.

\medskip

In their memoir \cite{AF3}, Arazy and Friedman give a description of indecomposable and 1-complemented subspaces of \( S^p(H, K) \) for \( 1 < p < \infty \). This gives rise to six distinct types of such subspaces, which we will define below.
The first three types are relatively straightforward to define.

\medskip

Let \( I \) and \( J \) be two index sets. We denote by $S^p_{I,J}$ the space $S^p(\ell^2_J,\ell^2_I)$, and by $S^p_I$ the space $S^p(\ell^2_I,\ell^2_I)$. Then, let $\top : S^p_{I,J} \to S^p_{J,I}$ denote the transpose map.
Regarding elements of $S^p_{I,J}$ as scalar matrices, \[([t_{ij}]_{i\in I, j\in J})^\top = [t_{ji}]_{j\in J,i\in I}.\]

Next, we define the spaces of symmetric and anti-symmetric matrices within \( S^p_I \), respectively, as follows \[\S^p_I := \{w\in S^p_I, \quad w^\top = w\} \quad \mbox{ and } \quad \A^p_I := \{w\in S^p_I, \quad w^\top = -w\}.\]

\begin{definition}\label{type123}

A space of symmetric matrices (resp. of anti-symmetric matrices) is a space equivalent to $\S^p_I\otimes a$, with $|I|\geq 1$ (resp. $\A^p_I\otimes a$, with $|I|\geq 5$), for some countable index set $I$, and some operator $a\in S^p(H_1).$

A space of rectangular matrices is a space equivalent to \[\{(w\otimes a_1, w^\top\otimes a_2),~w\in S^p_{I,J}\} \subset S^p\left(\ell^2_J(H_1),\ell^2_I(H_1)\right)\overset{p}{\oplus}S^p\left(\ell^2_I(H_2),\ell^2_J(H_2)\right),\]
 for some countable index sets $I$ and $J$, with $|I|, |J|\geq 2$, or $|I|=1$ and $|J|=\infty$, and for some operators $a_i \in S^p(H_i)$, $i=1,2.$

\end{definition}

To define the fourth type, we first introduce some terminology and notation related to Fock spaces. For more details, we refer to \cite{MRR}.

\medskip 
Let $N\geq 2$ be an integer, and let $\mathcal{H}=\ell^2_N$, with $(e_k)_{1\leq k\leq N}$ denoting the canonical basis. For any $m=0,\ldots ,N$, let $\H^{\wedge m}:= \H\wedge\cdots \wedge \H$ denote the $m$-fold anti-symmetric tensor product of $\H$. We define \( \Lambda_N \) as the Hilbertian direct sum of these tensor products:
\[
\Lambda_N := \bigoplus_{0\leq m\leq N}^{2} \H^{\wedge m}.
\]
This space is known as the Fock space associated with \( \H \). Note that for $0\leq m\leq N$, \[\text{dim}(\H^{\wedge m}) = \binom{N}{m},\quad \text{ and }\quad \text{dim}(\La_N) = 2^N.\]

For $1\leq k\leq N$, the creation operator $c_k$ is defined by
 \[ c_k: \left|\begin{array}{ccc}
    \La_N & \to & \La_N \\
    x & \mapsto & e_k\wedge x
\end{array}\right. .\]

We need to consider restrictions of these operators.  Denote by \(x_{k,m} : \H^{\wedge(m-1)}  \to  \H^{\wedge m}\) the restriction of the operator $c_k$ to the space $\H^{\wedge(m-1)}$, for $1\leq k,m\leq N$. Then, for each \( 1 \leq m \leq N \), we define the map \begin{equation}\label{defvarphi_m}
    \varphi_m : \left|\begin{array}{ccc}
    \H & \to & S^p(\H^{\wedge (m-1)}, \H^{\wedge m})\subset S^p(\La_N) \\
    (t_1,\ldots ,t_N) & \mapsto  & \frac{1}{\binom{N-1}{m-1}^{1/p}}\sum\limits_{k=1}^N t_k x_{k,m}.
\end{array} \right.
\end{equation} According to \cite[Proposition 2.4, p.18]{AF2} which is extended for $1<p\neq 2<\infty$ in \cite{AF3}, it is an isometry.

\begin{definition}\label{type4}
An $N$-dimensional AF-Hilbertian space of finite dimension $N\geq 2$ is a space equivalent to \[\{(\varphi_1(t)\otimes a_1,\ldots ,\varphi_N(t)\otimes a_N),~t\in \H\} \subset \bigoplus^{p}_{1\leq m\leq N} S^p(\Lambda_N\overset{2}{\otimes} H_m),\] for $H_1$,\ldots ,$H_N$ some Hilbert spaces and some operators \( a_i \in S^p(H_m) \), \( 1 \leq m \leq N \).
\end{definition}

\medskip


\begin{definition} Let \( N \in \mathbb N^* \), $H$ a Hilbert space and let \((w_1, \ldots, w_N)\) be a family of operators in $B(H)$. This family is a spin system in $B(H)$ if \begin{itemize}
    \item for any \(j\in \{1,\ldots,N\},~w_j\) is a self-adjoint unitary operator,
    \item for any \(1\leq i\neq j\leq N\), we have $w_iw_j + w_jw_i = 0.$
\end{itemize}
\end{definition}

We now define a particular spin system. Let \( N \in \mathbb N^* \), and for each \( 1 \leq k \leq N \), recall \( c_k \in B(\Lambda_N) \). We define
\begin{equation}\label{defs_j}
    s_k := c_k + c_k^* \quad \text{and} \quad s_{-k} := \frac{c_k - c_k^*}{i}.
\end{equation}
It is well known that the family \((c_k)_{1\leq k\leq N}\) satisfies the relations \begin{equation}\label{CAR}
    \left\{\begin{array}{cc}
        c_ic_j + c_jc_i = 0, &  1\leq i,j\leq N  \\
        c_i^*c_i + c_ic_i^* = 1, & 1\leq i\leq N \\
        c_i^*c_j + c_jc_i^* = 0, & 1\leq i\neq j\leq N.
    \end{array}\right.
\end{equation}
It follows that the family \( (s_1, s_2, \ldots, s_N, s_{-1}, \ldots, s_{-N}) \) forms a spin system in \( B(\Lambda_N) \). For each nonempty subset \( A \) of the set \(\{-N, \ldots, -1,1, \ldots,  N\} \), let $k=|A|$ denote its cardinality, and write \[ A = \{i_1 , i_2 , \ldots , i_k\}, \] with $i_1 < i_2 < \ldots < i_k$. We set
\[
s_A := s_{i_1}s_{i_2}\ldots s_{i_k}.
\] Then if \( A = \emptyset \) , we set $$s_A = 1.$$
The family \( \{s_A \mid A \subset \{-N, \ldots, -1,1, \ldots,  N\}\} \) forms a basis for the \( C^* \)-algebra \[C^*\langle s_1, \ldots, s_N, s_{-1}, \ldots, s_{-N} \rangle,\] generated by the spin system. Since both sides have the same dimension, we have
\[
B(\Lambda_N) = C^*\langle s_1, \ldots, s_N, s_{-1}, \ldots, s_{-N} \rangle.
\]
We define the space \(\mathcal{F}_N\) as
\begin{equation}\label{defFn}
    \mathcal{F}_N := \text{span}\left\{1, s_1, \ldots, s_N, s_{-1}, \ldots, s_{-N}, s_{-N}s_N \ldots s_{-1}s_1\right\} \subset B(\Lambda_N),
\end{equation}
and we define \(\mathcal{E}_{2N}\) as \begin{equation}\label{defE2N}
    \mathcal{E}_{2N} := \text{span}\{1,s_1,\ldots,s_N,s_{-1},\ldots,s_{-N}\}\subset B(\Lambda_N).
\end{equation}
Next, we introduce the linear map \(\sigma : \F_N\to\F_N\) defined by 
\begin{equation}\label{defsigma}
   \sigma(1)=1,\quad \sigma(s_j) = s_j,\quad \sigma(s_{-N}s_{N}\cdots s_{-1}s_1) = - s_{-N}s_{N}\cdots s_{-1}s_1
\end{equation}
 for $j\in\{-N,\ldots -1,1,\ldots, N\}.$

Observe that this map is an involution. Moreover, this is an isometry from $\F_N^p$ to itself, where $\F_N^p$ is the subspace $\F_N$ endowed with the $S^p-$norm, for $p\geq 1$. The authors of \cite{MRR} pointed out to me that the explanation given in \cite[Remark 5.5, p.20]{MRR} is not correct for even $N$. However, we may consider the linear map $\tau' : \F_N\to\F_N$ defined by \(\tau'(1)=1,~\tau'(s_j) = -s_j\), for any \(j\in\{-N,\ldots,-1,1,\ldots, N\}\), and \(\tau'( s_{-N}s_{N}\cdots s_{-1}s_1) = - s_{-N}s_{N}\cdots s_{-1}s_1.\) This is an isometry from $\F_N^p$ to itself. Moreover, for the $*$-isomorphism $\pi : B(\La_N)\to B(\La_N)$  taking $s_j$ to $-s_j$ for any $j\in\{-N,\ldots,-1,1,\ldots, N\},$ we have \(\pi\tau' = \tau.\) (see ArXiv version of \cite{MRR}).

\medskip

\begin{definition}\label{type56} A space of even dimension $2N\geq 4$ is called spinorial space if it is equivalent to \[\left\{(x\otimes a_1, \sigma(x)\otimes a_2),~~x\in\F_N^p\right\},\] for some operators $a_i\in S^p(H_i)$, $i=1,2$.

A space of odd dimension $2N+1\geq 5$ is called a spinorial space if it is equivalent to \[\left\{ x\otimes a,~~x\in\E_{2N}^p\right\},\] for an operator $a\in S^p(H_1)$.

\end{definition}

In their memoirs \cite{AF2} and \cite{AF3}, Arazy and Friedman provide a description of indecomposable 1-complemented subspaces for $1\leq p\neq 2<\infty$. Here, we reproduce the statement as it appears in \cite[Theorem 2.8, p.9]{MRR}.

\begin{theorem}[Arazy-Friedman]\label{thmAF}
Let $1\leq p \neq 2 <\infty$, let $H$, $K$ be Hilbert spaces, and let $X$ be an indecomposable subspace of the space $S^p(H,K)$. 
Then $X$ is 1-complemented in $S^p(H,K)$ if, and only if, $X$ is of one of the following types: 
\begin{enumerate}
    \item A space of symmetric matrices, i.e. $X$ is equivalent to the space $\mathcal{S}^p_I\otimes a_1$,
    \item A space of anti-symmetric matrices, i.e. $X$ is equivalent to the space $\mathcal{A}^p_I\otimes a_1$,
    \item A space of rectangular matrices, i.e. $X$ is equivalent to the space $\{(w\otimes a_1, w^\top\otimes a_2),~~ w\in S^p_{I,J}\}$,
    \item An N-dimensional AF-Hilbertian space, i.e. $X$ is equivalent to the space \(\{(\varphi_1(t)\otimes a_1,\ldots, \varphi_N(t)\otimes a_N),~~ t\in\ell^2_N\}\),
    \item A spinorial space of even dimension, i.e. $X$ is equivalent to the space $\{(x\otimes a_1, \sigma(x)\otimes a_2),\quad x\in\F_N^p\}$,
    \item A spinorial space of odd dimension, i.e. $X$ is equivalent to the space $\E_{2N}^p\otimes a_1$,
    
    with $a_i\in S^p(H_i)$ some operators on Hilbert spaces $H_i$, $I$ and $J$ some countable sets, and $N\geq 2$.
\end{enumerate}
\end{theorem}
From now on, type 1 spaces will refer to spaces of symmetric matrices, corresponding to item 1 of the above theorem; the same convention applies to the other numbered items.
\begin{remark}
 In the case of spinorial spaces, in \cite{AF3}, Arazy and Friedman introduced other spaces named \( AH(N) \) and \( DAH(N) \). However, the description given above is equivalent to that of Arazy and Friedman. For further details, see \cite{MRR}, and the Appendix.



\end{remark}

In the sequel, we will discuss some properties of the support of elements or sets. They play an important role in the subsequent work and facilitate the understanding of the key concepts. We will show in Remark \ref{rel_equiv} that the relation introduced in (\ref{defequiv}) is effectively an equivalence relation.
For an operator \( x \in B(H, K) \) on Hilbert spaces, let \( \text{s}_r(x) \) and \( \text{s}_\ell(x) \) denote the right and left supports of \( x \), respectively. By definition, \( \text{s}_r(x) \) (and similarly \( \text{s}_\ell(x) \)) is the smallest orthogonal projection \( e \) in \( B(H) \) (or \( B(K) \)) such that \( x e = x \) (or \( e x = x \)). If \( x \in B(H) \) is self-adjoint, then \( \text{s}_r(x) = \text{s}_\ell(x) \), and this projection is referred to as the support of \( x \), denoted by \( \operatorname{s}(x) \). When \( x \) has the polar decomposition \( x = u |x| \) in \( B(H,K) \), the following identities hold :
\[
\text{s}_r(x) = u^* u \quad \text{and} \quad \text{s}_\ell(x) = u u^*.
\]
More generally, for any subset \( X \) of \( B(H,K) \), we define the left and right supports of \( X \) as :
\[
\text{s}_r(X) := \sup \{ \text{s}_r(x) : x \in X \} \quad \text{and} \quad \text{s}_\ell(X) := \sup \{ \text{s}_\ell(x) : x \in X \}.
\]
If both of these projections are the identity, we say that \( X \) is non-degenerate. These definitions appear in the introduction of \cite{AF2}.

\begin{remark}
 If the subset $X$ is invariant under $*$, that is, $x^*\in X$ for all $x\in X$, then \(\text{s}_\ell(X) = \text{s}_r(X)\). In this case, we denote this projection by \(\operatorname{s}(X)\).
\end{remark}

\medskip
In the following, we will focus on the case where \( X = \text{Ran}(P) \), with \( P : \text{S}^p(H) \to \text{S}^p(H) \) a contractive projection for \( 1 \leq p < \infty \). We will denote the right and left supports of \( P \) by \( \text{s}_r(P) \) and \( \text{s}_\ell(P) \), respectively, which correspond to \( \text{s}_r(\text{Ran}(P)) \) and \( \text{s}_\ell(\text{Ran}(P)) \). It is clear that if the projection is an adjoint preserving map, that is, $P(x^*)=P(x)^*$ for all $x\in S^p(H)$, then Ran$(P)$ is invariant under $*$. Hence, we have \begin{equation}\label{defsP}
    \text{s}_\ell(P) = \text{s}_r(P) := \operatorname{s}(P).
\end{equation} Note that if $P$ is positive, then it is adjoint preserving. 

The definition of equivalent spaces (\ref{defequiv}) can be reformulated by imposing additional conditions on the partial isometries, with the help of the following lemma.

\begin{lemma}\label{equiv_unit}
Let $X\subset S^p(H,K)$, $Y\subset S^p(H',K')$ be closed subspaces. Then $X$ and $Y$ are equivalent if and only if there exist partial isometries $U : H'\to H$ and $V : K'\to K$ such that \begin{equation}\label{equiv_supp}\begin{array}{ccc}
    UU^* = \operatorname{s}_r(X), &\quad& VV^* = \operatorname{s}_\ell(X),  \\
    U^*U = \operatorname{s}_r(Y), &\quad& V^*V = \operatorname{s}_\ell(Y).
\end{array}
\end{equation}
and \[X= VYU^*,\quad Y= V^*XU.\]
\end{lemma}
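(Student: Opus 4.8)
The backward implication is immediate: if $U,V$ are partial isometries satisfying the displayed support conditions together with $X=VYU^*$ and $Y=V^*XU$, then in particular they realize the equivalence $X\sim Y$ in the sense of (\ref{defequiv}). So the entire content lies in the forward implication, which I would prove by starting from an arbitrary pair of partial isometries witnessing $X\sim Y$ and \emph{cutting them down} to their essential parts.

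Assume $X\sim Y$, and fix partial isometries $U_0:H'\to H$ and $V_0:K'\to K$ with $X=V_0YU_0^*$ and $Y=V_0^*XU_0$. The first step is to record support containments. Substituting one relation into the other gives the setwise identities $Y=V_0^*V_0\,Y\,U_0^*U_0$ and $X=V_0V_0^*\,X\,U_0U_0^*$; since the maps $y\mapsto V_0^*V_0\,y\,U_0^*U_0$ and $x\mapsto V_0V_0^*\,x\,U_0U_0^*$ are idempotent, they fix each element of $Y$ (resp.\ $X$). Reading off the minimality of supports from $y U_0^*U_0=y$, $V_0^*V_0\,y=y$ and the analogous identities for $x$, we obtain
\[
\text{s}_r(Y)\le U_0^*U_0,\quad \text{s}_\ell(Y)\le V_0^*V_0,\quad \text{s}_r(X)\le U_0U_0^*,\quad \text{s}_\ell(X)\le V_0V_0^*.
\]
Now set $U:=U_0\,\text{s}_r(Y)$ and $V:=V_0\,\text{s}_\ell(Y)$. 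Because $\text{s}_r(Y)\le U_0^*U_0$ and $\text{s}_\ell(Y)\le V_0^*V_0$, these are again partial isometries and $U^*U=\text{s}_r(Y)$, $V^*V=\text{s}_\ell(Y)$, which settles the two ``initial-space'' conditions at once.

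The heart of the proof is to identify the final supports, i.e.\ to show
\[
UU^*=U_0\,\text{s}_r(Y)\,U_0^*=\text{s}_r(X),\qquad VV^*=V_0\,\text{s}_\ell(Y)\,V_0^*=\text{s}_\ell(X).
\]
For the first equality I would establish two inequalities. Using $X=V_0YU_0^*$ and $\text{s}_r(Y)\le U_0^*U_0$, a direct computation gives $x\,(U_0\,\text{s}_r(Y)\,U_0^*)=x$ for every $x\in X$, whence $\text{s}_r(X)\le U_0\,\text{s}_r(Y)\,U_0^*$. For the reverse inequality I would use the \emph{other} relation $Y=V_0^*XU_0$: writing $y=V_0^*xU_0$ and invoking $\text{s}_r(X)\le U_0U_0^*$, one checks $y\,(U_0^*\,\text{s}_r(X)\,U_0)=y$, so $\text{s}_r(Y)\le U_0^*\,\text{s}_r(X)\,U_0$; conjugating by $U_0$ and using $U_0U_0^*\,\text{s}_r(X)\,U_0U_0^*=\text{s}_r(X)$ yields $U_0\,\text{s}_r(Y)\,U_0^*\le \text{s}_r(X)$. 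Combining the two gives the claimed equality, and the statement for $V$ follows by the identical argument applied to left supports (equivalently, to the adjoints).

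It remains to check that the relations survive the cut. For $y\in Y$ one has $\text{s}_\ell(Y)\,y\,\text{s}_r(Y)=y$, so $VyU^*=V_0\,\text{s}_\ell(Y)\,y\,\text{s}_r(Y)\,U_0^*=V_0yU_0^*$, giving $VYU^*=V_0YU_0^*=X$; and for $x\in X$, since $V_0^*xU_0\in Y$ is fixed by left multiplication by $\text{s}_\ell(Y)$ and right multiplication by $\text{s}_r(Y)$, we get $V^*xU=V_0^*xU_0$, giving $V^*XU=Y$. I expect the genuine obstacle to be the identification $UU^*=\text{s}_r(X)$: it is the only step that uses both equivalence relations simultaneously, and the subtlety to watch is precisely that each of the two inequalities relies on a different one of $X=V_0YU_0^*$ and $Y=V_0^*XU_0$, so neither alone suffices. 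Everything else—the support containments, the partial-isometry bookkeeping, and the preservation of the relations—is routine once these inequalities are in place.
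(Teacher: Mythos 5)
Your proposal is correct and follows essentially the same route as the paper: both cut the given witnesses down to $U_0\,\mathrm{s}_r(Y)$ and $V_0\,\mathrm{s}_\ell(Y)$, read off the initial-space conditions immediately, and then identify the final supports by a two-sided comparison that uses each of the relations $X=V_0YU_0^*$ and $Y=V_0^*XU_0$ once. The only (cosmetic) difference is that you prove the two operator inequalities $\mathrm{s}_r(X)\le U_0\,\mathrm{s}_r(Y)\,U_0^*$ and $U_0\,\mathrm{s}_r(Y)\,U_0^*\le \mathrm{s}_r(X)$ directly, whereas the paper squeezes $\mathrm{s}_r(Y)\le \mathcal{U}^*\mathrm{s}_r(X)\mathcal{U}\le \mathcal{U}^*\mathcal{U}$ and then concludes by comparing ranges.
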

\begin{proof}
We only need to prove that if $X\sim Y$, we may obtain conditions (\ref{equiv_supp}).
Let \(U : H'\to H\) and \(V : K'\to K\) be two partial isometries such that \[X = VYU^*,\quad Y=V^*XU.\] 
Denote \(r:= \text{s}_r(Y)\) and \(\ell := \text{s}_\ell(Y).\) Note that for each $y\in Y$, $yU^*U = y$, so $r\leq U^*U$. It implies that $\mathcal{U} :=Ur$ is a partial isometry, with $\mathcal{U}^*\mathcal{U} = rU^*Ur = r^2 = r$. Similarly, $\mathcal{V}:=V\ell$ is a partial isometry with $\mathcal{V}^*\mathcal{V} =\ell$, and we have \[X = \mathcal{V}Y\mathcal{U}^*,\quad Y = \mathcal{V}^*X\mathcal{U}.\]
Now, we show that \[\mathcal{V}\mathcal{V}^* = \text{s}_\ell(X),\quad \mathcal{U}\mathcal{U}^* = \text{s}_r(X).\]
Since these maps are orthogonal projections, to prove an equality, it suffices to show that they have the same range. The reasoning is detailed below for $\mathcal{U}$ only.
For every $x\in X$, $x\mathcal{U}\mathcal{U^*} = x$, so \(\text{s}_r(X) \leq \mathcal{U}\mathcal{U^*},\) and \(\text{s}_r(X)\mathcal{U}\) is a partial isometry. Then \(Y = \mathcal{V^*}X\text{s}_r(X)\mathcal{U}\), so \[r\leq (\text{s}_r(X)\mathcal{U})^*(\text{s}_r(X)\mathcal{U}) = \mathcal{U}^*\text{s}_r(X)\mathcal{U} \leq \mathcal{U}^*\mathcal{U}=r.\] Hence \[\mathcal{U}^*\text{s}_r(X)\mathcal{U}=\mathcal{U}^*\mathcal{U}.\] Since \(\Ran(\text{s}_r(X))\subset \Ran(\mathcal{U})\), then \[\mathcal{U}\mathcal{U}^*\text{s}_r(X)\mathcal{U} =\text{s}_r(X)\mathcal{U},\] and we deduce \[ \mathcal{U}=\text{s}_r(X)\mathcal{U}.\] Hence \(\Ran(\mathcal{U}) \subset \Ran(\text{s}_r(X)).\)

\end{proof}
As a consequence of Lemma \ref{equiv_unit}, we see that in the definition of equivalent spaces (\ref{defequiv}), we may assume that the partial isometries are unitaries if we have $X$ and $Y$ non-degenerate.

\begin{remark}\label{rel_equiv}
\begin{enumerate}
\item We now can prove that the relation $\sim$ defined in (\ref{defequiv}) is an equivalence relation. Let $X\subset S^p(H,K)$, $Y\subset S^p(H',K')$ and $Z\subset S^p(H'',K'')$ be closed subspaces such that \[X\sim Y\quad \text{ and }\quad Y\sim Z.\]
Then by Lemma \ref{equiv_unit}, we have partial isometries \[\begin{array}{cc}
    U_1 : H'\to H & U_2 : H''\to H' \\
    V_1 : K'\to K & V_2 : K'' \to K'
\end{array}\] such that \[\begin{array}{cc}
    X = V_1YU_1^*, & Y = V_1^*XU_1 \\
    Y = V_2ZU_2^* & Z = V_2^*YU_2.
\end{array}\] and \[\text{s}_r(X) = U_1U_1^*,\quad \text{s}_\ell(X) = V_1V_1^*,\quad \text{s}_r(Y) = U_1^*U_1 = U_2U_2^*,\]\[ \text{s}_\ell(Y) = V_1^*V_1 = V_2V_2^*,\quad \text{s}_r(Z) = U_2^*U_2,\quad \text{s}_\ell(Z) = V_2^*V_2.\]
Then \[X = V_1V_2Z(U_1U_2)^*,\quad Z = (V_1V_2)^*X(U_1U_2)\]
hence it suffices to check that $U_1U_2$ and $V_1V_2$ are partial isometries. We can compute \[(U_1U_2)(U_1U_2)^*(U_1U_2) = U_1U_2U_2^*U_1^*U_1U_2 = U_1U_1^*U_1U_1^*U_1U_2 = U_1U_2.\]
The computation is the same for \(V_1V_2\). This shows that the relation $\sim$ is transitive, the rest follows easily.

\item If \(X\) is a subspace of \(S^p(H,K)\), we define the subspaces $\H:= \Ran(\text{s}_r(X)) \subset H$, and $\K:= \Ran(\text{s}_\ell(X))\subset K$. It is clear that $X\subset S^p(\H,\K)$ is non-degenerate and equivalent to $X\subset S^p(H,K)$. Lemma \ref{equiv_unit} also allows us to establish that if we have two equivalent spaces $X$ and $Y$, $X$ is indecomposable if and only if $Y$ is.

\end{enumerate}
\end{remark}

The following lemma allows us to restrict, in Definition \ref{type123}, Definition \ref{type4} and Definition \ref{type56}, to the case where the operators $a$, $a_i$ are positive and injective, which slightly simplifies the description of Theorem \ref{thmAF}. For example, let $I$ be a countable index set, and let $a\in S^p(H_1)$. If a subspace $X$ of the space $S^p(H)$ is equivalent to \(\S_I^p\otimes a\), then there exists a positive and injective operator $\alpha \in S^p(K_1)$, such that $X$ is equivalent to \(\S_I^p\otimes \alpha\).

\begin{lemma}
Let $Z$ be a Banach space, $n\in\Ndb$ an integer, and for $i=1,\ldots ,n$, let $H_i$, $K_i$, $H_i'$ be Hilbert spaces. Assume that $\psi_i : Z\to S^p(H_i,K_i)$ is an isometry, and $a_i\in S^p(H_i')\setminus\{0\}$. Set \(H_i''= \operatorname{Ker}(a_i)^\perp\), and denote by $\alpha_i: H_i''\to H_i''$ the restriction of the operator $|a_i|$. Then the two subspaces \[X_1 :=\{(\psi_1(z)\otimes a_1,\psi_2(z)\otimes a_2,\ldots ,\psi_n(z)\otimes a_n),\quad z\in Z\}\subset \bigoplus^p_{i=1,\ldots ,n} S^p(H_i\overset{2}{\otimes} H_i', K_i\overset{2}{\otimes} H_i')\] and \[ X_2 :=\{(\psi_1(z)\otimes \alpha_1,\psi_2(z)\otimes \alpha_2,\ldots ,\psi_n(z)\otimes \alpha_n),~ z\in Z\}\subset \bigoplus^p_{i=1,\ldots ,n} S^p(H_i\overset{2}{\otimes} H_i'', K_i\overset{2}{\otimes} H_i'')\] are equivalent.
\end{lemma}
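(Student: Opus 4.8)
The plan is to construct the two partial isometries demanded by the definition (\ref{defequiv}) blockwise, out of the polar decompositions of the operators $a_i$. For each $i$, I would write the polar decomposition $a_i = u_i|a_i|$ in $S^p(H_i')$, so that $u_i$ is a partial isometry with $u_i^* u_i = s(|a_i|)$, the orthogonal projection onto $Ker(a_i)^\perp = H_i''$. Let $j_i : H_i'' \to H_i'$ be the inclusion; it is an isometry satisfying $j_i^* j_i = I_{H_i''}$ and $j_i j_i^* = s(|a_i|)$. Since $|a_i|$ vanishes on $(H_i'')^\perp$ and restricts to $\alpha_i$ on $H_i''$, one has $|a_i| = j_i \alpha_i j_i^*$, whence, setting $w_i := u_i j_i : H_i'' \to H_i'$,
\[
a_i = u_i|a_i| = w_i \alpha_i j_i^*.
\]
I would first confirm that $w_i$ is again an isometry, via $w_i^* w_i = j_i^*(u_i^* u_i)j_i = j_i^* s(|a_i|) j_i = j_i^* j_i = I_{H_i''}$, using $s(|a_i|)j_i = j_i$. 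Together with $j_i^* j_i = I_{H_i''}$ this yields the inverse factorization $\alpha_i = w_i^* a_i j_i$, which is the identity that recovers $X_2$ from $X_1$.

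Next I would assemble the global maps
\[
U := \bigoplus_i \bigl(I_{H_i} \overset{2}{\otimes} j_i\bigr), \qquad V := \bigoplus_i \bigl(I_{K_i} \overset{2}{\otimes} w_i\bigr),
\]
regarded as maps $U : \bigoplus^2_i (H_i \overset{2}{\otimes} H_i'') \to \bigoplus^2_i (H_i \overset{2}{\otimes} H_i')$ and $V : \bigoplus^2_i (K_i \overset{2}{\otimes} H_i'') \to \bigoplus^2_i (K_i \overset{2}{\otimes} H_i')$ between the Hilbert spaces underlying $X_2$ and $X_1$. Each block is a tensor product of an isometry with an identity, hence an isometry, so the block-diagonal operators $U$ and $V$ are partial isometries. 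It then remains to verify $X_1 = V X_2 U^*$ and $X_2 = V^* X_1 U$ by evaluating on generators and applying $(A \otimes B)(C \otimes D)(E \otimes F) = (ACE) \otimes (BDF)$ in each block:
\[
V\,(\psi_i(z)\otimes \alpha_i)_i\,U^* = \bigl(\psi_i(z)\otimes w_i \alpha_i j_i^*\bigr)_i = (\psi_i(z)\otimes a_i)_i,
\]
\[
V^*(\psi_i(z)\otimes a_i)_i\,U = \bigl(\psi_i(z)\otimes w_i^* a_i j_i\bigr)_i = (\psi_i(z)\otimes \alpha_i)_i,
\]
so that $X_1 = V X_2 U^*$ and $X_2 = V^* X_1 U$, which is exactly $X_1 \sim X_2$.

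There is no deep obstacle here; the argument is essentially bookkeeping around polar decomposition. The two points that demand care are, first, confirming that the support $s(|a_i|)$ is precisely the projection onto $H_i''$, so that the null part of $|a_i|$ does not interfere and the identities $j_i^* j_i = w_i^* w_i = I_{H_i''}$ hold — this is what makes the factorization $a_i = w_i \alpha_i j_i^*$ reversible through $\alpha_i = w_i^* a_i j_i$; and second, checking that $U$ and $V$ genuinely act as partial isometries on the $\ell^2$-direct sums and that the conjugation $y \mapsto V y U^*$ is compatible with the embedding $\bigoplus^p_i S^p(\cdot,\cdot) \subset S^p\bigl(\bigoplus^2_i \cdot, \bigoplus^2_i \cdot\bigr)$ recalled in Section \ref{sectionDescr 1compl}, which legitimizes the blockwise computation.
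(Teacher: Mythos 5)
Your proof is correct and follows essentially the same route as the paper: polar decomposition $a_i=u_i|a_i|$ to remove the phase, and compression to the support $H_i''$ to replace $|a_i|$ by $\alpha_i$. The only difference is presentational — the paper passes through the intermediate space $Y=\{(\psi_i(z)\otimes|a_i|)_i\}$ and invokes transitivity of $\sim$, whereas you compose the two partial isometries into $w_i=u_ij_i$ and verify the equivalence in one step.
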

\begin{proof}
We first prove the following equivalence relation, \[X_1\sim  Y:=\{(\psi_1(z)\otimes |a_1|,\psi_2(z)\otimes |a_2|,\ldots ,\psi_n(z)\otimes |a_n|),\quad z\in Z\}.\]
For $1\leq i\leq n$, denote by \(a_i = u_i|a_i|\) the polar decomposition of $a_i$. Then, we define a partial isometry as follows \[U:= (\operatorname{Id}_{K_1}\otimes u_1,\operatorname{Id}_{K_2}\otimes u_2,\ldots ,\operatorname{Id}_{K_n}\otimes u_n) \in B(\bigoplus^2_{1\leq i\leq n} K_i\overset{2}{\otimes} H_i').\]  To prove the equivalence relations, observe that \[UY = X_1,\quad  \text{ and }\quad U^*X_1 = Y.\]

Now, denote by $s_i : H_i'\to H_i''$ the orthogonal projection onto $H_i''$. Then $s_is_i^* = \operatorname{Id}_{H_i''}$, and $s_i^*s_i = \operatorname{s}(|a_i|)$. Using the identities \[\alpha_i = s_i|a_i|s_i^*,\quad |a_i| = s_i^*\alpha_is_i,\] and the partial isometries  \(V:= (\operatorname{Id}_{K_1}\otimes s_1,\ldots ,\operatorname{Id}_{K_n}\otimes s_n)\) and \( W:=(\operatorname{Id}_{H_1}\otimes s_1, \ldots ,\operatorname{Id}_{H_n}\otimes s_n),\) we obtain

\[ VYW^* = X_2,\quad \text{ and }\quad V^*X_2W = Y.\]
Hence \(Y\sim X_2\), and therefore \(X_1\sim X_2\).
\end{proof}


\section{Results on $JC^*$-triples}\label{sectionJC*triple}

The definition and basic properties of $JC^*$-triples are given by Harris in \cite{H2} and \cite{H}, and reviewed in this section.
A $JC^*$-triple is a closed subspace $J$ of $B(H)$, where $H$ is a Hilbert space, such that \(xx^*x\in J\) for each $x\in J$. Using polarisation identities, we obtain that a $JC^*$-triple $J\subset B(H)$ is closed under the triple product \begin{equation}\label{tripleproduct}
     \{x,y,z\} = \dfrac{xy^*z + zy^*x}{2}.
\end{equation}
For example, the space of symmetric matrices $\S_I$ in $B(\ell^2_I)$, or the space of anti-symmetric matrices $\A_I $ in $ B(\ell^2_I)$ are $JC^*$-triples. It is also important to see that the spaces $\F_N$ and $\E_{2N}$ introduced in (\ref{defFn}) are $JC^*$-triples. This result is a consequence of the following proposition.

\begin{proposition}
Let $(w_1,\ldots w_m)$ be a spin system in $B(H)$. Then the two subspaces \[\operatorname{span}\{w_1,\ldots ,w_m\},\quad \text{ and }\quad \operatorname{span}\{1,w_1,\ldots ,w_m\}\] are $JC^*$-triples. 
\end{proposition}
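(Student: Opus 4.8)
The plan is to reduce everything to a short computation with the triple product $\{x,y,z\}=\tfrac{1}{2}(xy^{*}z+zy^{*}x)$. Since $\{x,x,x\}=xx^{*}x$, any subspace closed under this product automatically satisfies the defining condition of a $JC^{*}$-triple, and both spans are finite-dimensional, hence closed subspaces of $B(H)$. The product is linear in its first and third arguments and conjugate-linear in the middle one, so for a complex subspace it is enough to verify that the triple product of any three \emph{generators} lands back in the span: indeed, for $x=\sum a_i w_i$, $y=\sum b_j w_j$, $z=\sum c_k w_k$ one has $\{x,y,z\}=\sum_{i,j,k} a_i\overline{b_j}c_k\,\{w_i,w_j,w_k\}$, so if each $\{w_i,w_j,w_k\}$ lies in the span, so does $\{x,y,z\}$, and in particular $xx^{*}x$.

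For $J=\text{Span}\{w_1,\ldots,w_m\}$ I would use $w_\ell^{*}=w_\ell$ to write $\{w_i,w_j,w_k\}=\tfrac{1}{2}(w_iw_jw_k+w_kw_jw_i)$, and then run a case analysis on the coincidences among $i,j,k$, invoking only the two spin relations $w_\ell^{2}=1$ and $w_aw_b=-w_bw_a$ for $a\neq b$. When $i,j,k$ are pairwise distinct, repeated anticommutation gives $w_kw_jw_i=-w_iw_jw_k$, so the two terms cancel and $\{w_i,w_j,w_k\}=0$. When exactly two indices agree one obtains a single generator up to sign: the representative case $i=k\neq j$ gives $\{w_i,w_j,w_i\}=w_iw_jw_i=-w_j$, while $i=j\neq k$ and $j=k\neq i$ give $+w_k$ and $+w_i$ respectively, and $i=j=k$ gives $w_i$. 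In every case the output lies in $J$.

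For $\tilde J=\text{Span}\{1,w_1,\ldots,w_m\}$ I would set $w_0:=1$, a central self-adjoint unitary, and note that triples with all indices in $\{1,\ldots,m\}$ are already covered by the previous step. If at least one index equals $0$, the corresponding factor is the identity and simply drops out, reducing $\{w_\mu,w_\nu,w_\rho\}$ to a product of at most two generators; this product is $1$, a single $w_\ell$, or the anticommutator of two distinct generators, which vanishes. All of these lie in $\tilde J$.

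The computation is essentially routine, so there is no serious obstacle; the two points that warrant care are the reduction step, which genuinely relies on the conjugate-linearity of the middle slot together with $J$ and $\tilde J$ being complex subspaces, and the vanishing in the pairwise-distinct case, which is precisely where the spin relations are needed. I would display the $i=k$ computation explicitly, since it is the one step producing a sign and hence the most error-prone.
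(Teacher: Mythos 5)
Your proof is correct. For $\mathrm{Span}\{w_1,\ldots,w_m\}$ it is essentially identical to the paper's: reduce by (conjugate-)linearity to triple products of generators, then run the case analysis on coincidences among $i,j,k$, with the pairwise-distinct case vanishing by anticommutation and the $i=k\neq j$ case producing the sign $-w_j$; all your signs check out. For the unital span the two arguments diverge in mechanics. The paper works directly with the cube map $x\mapsto xx^*x$: after a scaling reduction it expands $(1+y)(1+y)^*(1+y)$, observes that $1+2y+y^*+yy^*y$ already lies in the span, and kills the remaining quadratic part $y^*y+yy^*+y^2$ by symmetrizing the coefficients against the relation $w_kw_l+w_lw_k=2\delta_{kl}\cdot 1$. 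You instead adjoin $w_0:=1$ as an extra generator and extend the triple-product case analysis, letting the identity factor drop out and reducing to $1$, a single $w_\ell$, or a vanishing anticommutator. Your route is arguably more uniform (one case analysis covers both spaces) and avoids the scaling step needed to pass from $\lambda 1+y$ to $1+y$; the paper's route has the minor virtue of verifying the defining condition $xx^*x\in J$ directly rather than the (polarization-equivalent) closure under the full triple product. Either way the content is the same and both are complete.
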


\begin{proof}
First of all, these two subspaces are of finite dimension, so they are closed. Let \(X = \text{span}\{w_1,\ldots ,w_m\},\) we prove that $X$ is closed under the triple product defined above. By linearity, it suffices to prove that for every \(i,j,k\in\{1,\ldots,m\},\) the element \(2\left\{w_i,w_j,w_k\right\} = w_iw_jw_k + w_kw_jw_i\in X.\) We use the relations \(w_iw_j = -w_jw_i\) for $1\leq i\neq j \leq m$, and $w_j^* = w_j = w_j^{-1}$, for each $1\leq j \leq m$.

Let \(i,j,k\in\{1,\ldots,m\},\) and let $x = w_iw_jw_k + w_kw_jw_i$. Then \begin{enumerate}
    \item if $i,j,k$ are all distinct, \(x = w_k(w_iw_j+w_jw_i) = 0\in X.\)
    \item If $i=j$, \(x = 2w_k \in X.\)
    \item If $j=k$, \(x = 2w_i\in X.\)
    \item If $i=k$, \(x = \left|\begin{array}{cc}
        -2w_j \in X & \text{ if }j\neq i, \\
        2w_j \in X & \text{otherwise.}
    \end{array}\right.\)
\end{enumerate}
This shows that $X$ is a $JC^*$-triple. 

For the second one, we prove that it is closed under the mapping $x\mapsto xx^*x$. It is sufficient to show that for all $y\in \text{span}\{w_1,\ldots ,w_m\},$ \[(1+y)(1+y)^*(1+y) \in \text{span}\{1,w_1,\ldots ,w_m\}.\]
Let $y\in \text{span}\{w_1,\ldots ,w_m\}$, we have \[(1+y)(1+y)^*(1+y) = \underbrace{1+2y+y^*+yy^*y}_{\in  \text{span}\{1,w_1,\ldots ,w_m\}} + y^*y+yy^*+y^2.\]
We may write $ y = \sum\limits_{k=1}^m \alpha_k w_k \in Y$, then \[y^*y+yy^*+y^2 = \sum_{k,l=1}^m \underbrace{(\overline{\alpha}_k\alpha_l + \alpha_k\overline{\alpha}_l + \alpha_k\alpha_l)}_{\text{symmetric in $k$ and $l$}}w_kw_l,\]
hence \[2(y^*y+yy^*+y^2) = \sum_{k,l=1}^m (\overline{\alpha}_k\alpha_l + \alpha_k\overline{\alpha}_l + \alpha_k\alpha_l)\underbrace{( w_kw_l + w_lw_k)}_{=2\delta_{kl}.1} = 2\sum_{k=1}^m (2|\alpha_k|^2 + \alpha_k^2).1, \] which belongs to \(\text{span}\{1,w_1,\ldots ,w_m\}.\)
\end{proof}

Note that for the spin system $(s_1,\ldots s_N,s_{-1},\ldots s_{-N})$ defined in (\ref{defs_j}), the family \[(s_1,\ldots s_N,s_{-1},\ldots s_{-N}, i^ns_{-N}s_N\cdots s_{-1}s_1)\] is also a spin system. It implies that the spaces $\F_N$ and $\E_{2N}$ are $JC^*$-triples.

\begin{definition}
Let $J\subset B(H_1)$, $L\subset B(H_2)$ be $JC^*$-triples. A bounded linear  map $\varphi : J\to L$ that verifies \[\varphi(xx^*x) = \varphi(x)\varphi(x)^*\varphi(x),\quad x\in J,\] is called a triple-homomorphism. A bijective triple-homomorphism is called a triple isomorphism.
\end{definition}
Note that such a map preserves the triple product defined in (\ref{tripleproduct}). Moreover, a triple-homomorphism is contractive, and a triple-isomorphism is isometric. 
A subspace of $B(H)$ is called unital if it contains $1=\operatorname{Id}_H.$

\begin{lemma}\label{JCautoadj}
Let $J\subset B(H)$ be a unital $JC^*$-triple, then \begin{enumerate}
    \item $J$ is invariant under $*$, i.e. for every $x\in J$, $x^*\in J$.
    \item Let $J'$ be another unital $JC^*$-triple, and let $\varphi : J\to J'$ be a triple-homomorphism. If $\varphi$ is unital, that is $\varphi(1)=1$, then $\varphi$ is adjoint preserving: for every $x\in J,$ $\varphi(x^*) = \varphi(x)^*.$
\end{enumerate}
\end{lemma}
\begin{proof}
The two results immediately follow from the identity \[x^* = \{1,x,1\}.\]
\end{proof}

\begin{proposition} Let $J\subset B(H)$ be a unital $JC^*$-triple, and let $\varphi : J\to J$ be a unital triple-homomorphism.
Then, for any unitary element $u\in J$, $\varphi(u) \in J$ is also unitary.
\end{proposition}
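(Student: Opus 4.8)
The plan is to encode the unitarity of $u$ as a single triple-product identity, push it through $\varphi$, and then upgrade the resulting algebraic identity to genuine unitarity by exploiting the contractivity of $\varphi$.

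First I would record what unitarity means in this setting. Since $J$ is unital we have $1\in J$, and $u$ being unitary means $u^*u=uu^*=1$ in $B(H)$. The key observation is that this data is captured by the triple product (\ref{tripleproduct}): a direct computation gives $\{u,u,1\}=\tfrac12(uu^*+u^*u)=\tfrac12(1+1)=1$. The point of writing $1$ in this form is that, unlike the bare statement ``$u$ is unitary'', it is an expression to which the structure-preserving properties of $\varphi$ apply.

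Next I would apply $\varphi$. A triple-homomorphism preserves the triple product, and $\varphi$ is unital so $\varphi(1)=1$; writing $v:=\varphi(u)$ we obtain $1=\varphi(\{u,u,1\})=\{v,v,1\}=\tfrac12(vv^*+v^*v)$, which yields the operator identity $vv^*+v^*v=2\cdot 1$ in $B(H)$. Note no appeal to the self-adjointness of $\varphi$ (Lemma \ref{JCautoadj}) is needed, since the factors $\varphi(u)^*$ arise directly from the definition of the triple product on the operators $v,v,1$.

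The identity $vv^*+v^*v=2\cdot 1$ alone does not suffice, as two positive operators can sum to $2\cdot 1$ without either equalling $1$, so the main (and only delicate) step is to bring in the metric information. Because a triple-homomorphism is contractive and $\|u\|=1$, we have $\|v\|\leq 1$, whence $v^*v\leq 1$ and $vv^*\leq 1$ as positive operators. Combining this with $v^*v=2\cdot 1-vv^*\geq 2\cdot 1-1=1$ forces $v^*v=1$, and symmetrically $vv^*=1$; thus $v=\varphi(u)$ is unitary. I expect this order-theoretic pinching to be the crux: the contractivity bound caps each summand at $1$, while the triple-product identity forces their sum to be $2\cdot 1$, leaving no slack for either to deviate from $1$.
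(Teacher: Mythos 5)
Your proof is correct, but it takes a genuinely different route from the paper. The paper disposes of this proposition in one line by citing an external result, namely point (a) of Proposition 2.1 in Harris's article \cite{H2} (applied with $A=1$), whereas you give a short self-contained argument entirely from the two facts the paper has already recorded about triple-homomorphisms: that they preserve the triple product $\{x,y,z\}=\tfrac12(xy^*z+zy^*x)$ and that they are contractive. Each step of yours checks out: $\{u,u,1\}=\tfrac12(uu^*+u^*u)=1$, so applying $\varphi$ and using $\varphi(1)=1$ gives $vv^*+v^*v=2\cdot 1$ for $v=\varphi(u)$; contractivity gives $v^*v\leq 1$ and $vv^*\leq 1$, and the pinching $v^*v=2\cdot 1-vv^*\geq 1$ then forces $v^*v=vv^*=1$. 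You are also right that no appeal to self-adjointness of $\varphi$ (Lemma \ref{JCautoadj}) is needed, since the adjoints appear only through the triple product applied to the images. What your approach buys is independence from \cite{H2}: the reader need not unpack Harris's more general statement about $J^*$-algebras. What the paper's citation buys is brevity and a pointer to the general principle of which this is a special case. Your observation that the algebraic identity $vv^*+v^*v=2\cdot 1$ alone is insufficient, and that the metric (contractivity) input is the genuine crux, is exactly the right diagnosis.
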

\begin{proof}This results follows \cite[(a) of Proposition 2.1, p.335]{H2}, applied  with $A=1$. \end{proof} 

\begin{proposition}\label{propfunctiononJC}
Let $J\subset B(H)$, $J'\subset B(K)$ be $JC^*$-triples, and let $\varphi : J\to J'$ be a triple-homomorphism. For $x\in J$, let $x = u|x|$ and $\varphi(x) = w|\varphi(x)|$ denote the polar decompositions of $x$ and $\varphi(x)$, respectively. Let $f : [0,\|x\|]\to \Cdb$ be a continuous function with $f(0)=0$. Then:
\begin{enumerate}
    \item  \(uf(|x|)\in J,\) where $f(|x|)$ is defined by the continuous functional calculus,
    \item  \(\varphi\left(uf(|x|)\right) = wf(|\varphi(x)|).\)
\end{enumerate}
\end{proposition}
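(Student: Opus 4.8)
The plan is to reduce everything to the case where $f$ is an odd polynomial and then pass to the general case by uniform approximation, using that $J$ and $J'$ are norm-closed and that $\varphi$ is bounded. The engine is the notion of \emph{odd triple powers}: for $y$ in a $JC^*$-triple set $y^{[1]}:=y$ and $y^{[2n+1]}:=\{y,y^{[2n-1]},y\}$ for $n\geq 1$; these all lie in the triple since it is closed under the product (\ref{tripleproduct}).

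First I would compute these powers explicitly for $x=u|x|$. Using $u^*u=\text{s}_r(x)=s(|x|)$ and hence $u^*u|x|=|x|$, a direct induction from the definition of the triple product gives $\{x,u|x|^{2n-1},x\}=x\,|x|^{2n-1}u^*\,x=u|x|^{2n+1}$, so that $x^{[2n+1]}=u|x|^{2n+1}\in J$. By linearity, $u\,p(|x|)\in J$ for every odd polynomial $p(t)=\sum_k a_k t^{2k+1}$. Since $\varphi$ preserves the triple product, $\varphi(x^{[2n+1]})=\varphi(x)^{[2n+1]}$, and the same computation applied to $\varphi(x)=w|\varphi(x)|$ yields $\varphi(u|x|^{2n+1})=w|\varphi(x)|^{2n+1}$; hence $\varphi(u\,p(|x|))=w\,p(|\varphi(x)|)$ for every odd polynomial $p$. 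This settles both assertions on the polynomial level.

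Next I would establish the approximation statement: every continuous $f:[0,\|x\|]\to\Cdb$ with $f(0)=0$ is a uniform limit of odd polynomials. The key is that $\{t\,q(t^2):q\text{ polynomial}\}$ is dense in $\{g\in C([0,\|x\|]):g(0)=0\}$. By Stone--Weierstrass the polynomials in $t^2$ are dense in $C([0,\|x\|])$ (the map $t\mapsto t^2$ is injective there), and multiplication by $t$ is a continuous operator, so $\{t\,q(t^2)\}$ is dense in $\{t\,g(t):g\in C([0,\|x\|])\}$; a one-variable interpolation argument (replacing $f$ near $0$ by the segment through the origin) then shows $\{t\,g(t)\}$ is itself dense in the functions vanishing at $0$. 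Finally I would pass to the limit: choosing odd polynomials $p_m\to f$ uniformly on $[0,\|x\|]$, the estimate $\|u\,p_m(|x|)-u\,f(|x|)\|\leq\sup_{t\in\sigma(|x|)}|p_m(t)-f(t)|\to 0$ and the closedness of $J$ give $u\,f(|x|)\in J$, proving (1). Since $\varphi$ is contractive, $\|\varphi(x)\|\leq\|x\|$, so $\sigma(|\varphi(x)|)\subset[0,\|x\|]$ and $w\,p_m(|\varphi(x)|)\to w\,f(|\varphi(x)|)$ by the same estimate; combining this with $\varphi(u\,p_m(|x|))\to\varphi(u\,f(|x|))$ (boundedness of $\varphi$) and the polynomial identity $\varphi(u\,p_m(|x|))=w\,p_m(|\varphi(x)|)$, uniqueness of limits yields (2).

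I expect the approximation lemma to be the main obstacle. One cannot simply factor $f(t)=t\cdot(f(t)/t)$, since $f(t)/t$ need not extend continuously to $0$ (for instance $f(t)=\sqrt{t}$), so the vanishing condition $f(0)=0$ must be exploited through the interpolation step rather than a naive division; everything else is either the induction on triple powers or a routine passage to the limit.
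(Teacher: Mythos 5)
Your proof is correct and follows essentially the same route as the paper: establish $u|x|^{2n+1}\in J$ and $\varphi(u|x|^{2n+1})=w|\varphi(x)|^{2n+1}$ by induction (your triple-power formulation $\{x,u|x|^{2n-1},x\}$ is just a repackaging of the paper's $x(xx^*x)^*x$ computation), then pass from odd polynomials to general $f$ with $f(0)=0$ by uniform approximation. The only difference is that you spell out the density of odd polynomials in $\{f\in C([0,\|x\|]):f(0)=0\}$, which the paper asserts without proof; your Stone--Weierstrass plus interpolation argument for that step is correct.
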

\begin{proof}
Note that \(xx^*x = u|x|^3 \in J \text{ and } x(xx^*x)^*x = u|x|^5 \in J.\)
By induction we obtain \[ u|x|^{2n+1} \in J,\quad  n\geq 0.\]
Thus, \[uP(|x|)\in J,\] for every odd polynomial $P\in\Cdb[X]$.
Since any continuous function $f$ on the spectrum $[0,\|x\|]$, with $f(0)=0$ can be uniformly approximated by odd polynomials, we obtain the first stated result.

We also have \[\varphi(u|x|^3) = \varphi(x)\varphi(x)^*\varphi(x) = w|\varphi(x)|^3,\] and by indcution, \[\varphi(u|x|^{2n+1}) = w|\varphi(x)|^{2n+1},\quad n\geq 0.\] We obtain \[\varphi(uP(|x|)) = wP(|\varphi(x)|),\] for $P\in \Cdb[X]$ an odd-polynomial, and we can deduce the desired result.
\end{proof}
 
\section{Description of positively 1-complemented subspaces of $S^p$}\label{sectionDescriposit}

We begin this section with a discussion on the support of a positive projection.

\begin{remark}\label{support_positive_proj} Let $1\leq p<\infty$, let $H$ be a Hilbert space, and let $P : S^p(H)\to S^p(H)$ be a positive contractive projection. Recall that $\text{s}(P)$ is defined as follows: \[\text{s}(P) = \sup\{\operatorname{s}_r(x),\quad x\in \Ran(P)\} = \sup\{\operatorname{s}_\ell(x),\quad x\in \Ran(P)\}.\]
Noting that $\Ran(P)$ is generated by its positive elements, it is easy to show that \begin{equation*}
    \operatorname{s}(P) = \sup\{\operatorname{s}(x),\quad x\geq 0,~x\in \Ran(P)\}.\end{equation*}
One should also observe that although a contractive projection $Q$ may be adjoint preserving, with $\text{s}_r(Q)  = \text{s}_\ell(Q)$, this support projection is not necessarily equal to $\sup\{\operatorname{s}(x),~x\geq 0,~x\in \Ran(Q)\}$. For example, the set of all anti-symmetric complex matrices of size $n\in\Ndb$ (or more generally $\A_I$) is invariant under $*$ and non-degenerate, yet contains no non-zero positive elements.
\end{remark}

In fact, Arhancet and Raynaud in \cite[Section 6]{AR} proved a more precise result, which captures a fundamental structural property of positive contractive projections.

\begin{proposition}\label{propAR}
Let $1\leq p <\infty$, let $H$ be Hilbert space, and let $P : S^p(H)\to S^p(H)$ be a positive contractive projection. Then there exists a positive element $x$ of $\operatorname{Ran}(P)$ such that \[\operatorname{s}(x) = \operatorname{s}(P).\]
\end{proposition}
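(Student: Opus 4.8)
The plan is to realize $s(P)$ as the support of a single, explicitly constructed positive element of $\mathrm{Ran}(P)$, obtained as a rapidly convergent positive combination of a countable dense family. By Remark~\ref{support_positive_proj} we may write
\[
s(P) = \sup\{s(y) : y \geq 0,\ y \in \mathrm{Ran}(P)\}.
\]
Since $H$ is separable, $S^p(H)$ and hence the closed subspace $\mathrm{Ran}(P)$ are separable, so the positive cone of $\mathrm{Ran}(P)$ admits a countable dense subset $(y_n)_{n\geq 1}$, which we may assume consists of nonzero elements (if that cone is trivial, take $x=0$). First I would check that nothing is lost in passing to this countable family, i.e. that $q := \sup_n s(y_n)$ already equals $s(P)$. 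The only nontrivial inequality is $s(y)\leq q$ for every positive $y\in\mathrm{Ran}(P)$: writing $y=\lim_k y_{n_k}$ in $S^p$, hence in operator norm, and using $s(y_{n_k})\leq q$ (so that $q\,y_{n_k}\,q=y_{n_k}$), one passes to the operator-norm limit to get $qyq=y$, and therefore $s(y)\leq q$.

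Next I would set
\[
x := \sum_{n \geq 1} 2^{-n}\,\frac{y_n}{\|y_n\|_p}.
\]
This series converges absolutely in $S^p(H)$ because its norms sum to $1$; its limit therefore lies in the closed subspace $\mathrm{Ran}(P)$ and is positive, since the positive cone is closed and the partial sums are positive. It remains to prove $s(x)=q$, which I regard as the heart of the argument. For $s(x)\leq q$: if $h\in\ker(q)$ then $s(y_n)h=0$, hence $y_nh=0$ for every $n$, and passing through the partial sums gives $xh=0$, so $\ker(q)\subseteq\ker(x)$. For the reverse inequality $q\leq s(x)$: if $xh=0$, then since $x\geq 0$,
\[
0 = (xh\mid h) = \sum_{n\geq 1}\frac{2^{-n}}{\|y_n\|_p}\,(y_nh\mid h),
\]
and because every summand is nonnegative, each $(y_nh\mid h)$ vanishes; positivity of $y_n$ then forces $y_nh=0$ and hence $s(y_n)h=0$ for all $n$, so $h\in\ker(q)$. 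Combining the two inclusions yields $s(x)=q=s(P)$, completing the proof.

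The step I expect to be delicate is precisely this last identity $s(x)=\sup_n s(y_n)$, as it is exactly here that positivity is indispensable. For general (non-positive) operators the support of a convergent series need not be the supremum of the individual supports, since cancellation can shrink it; the termwise nonnegativity of $(xh\mid h)=\sum\cdots$, which rules out cancellation, is what makes the argument go through and explains why the statement is special to positive maps. The reduction to a countable family via separability is the other point requiring a little care, but it is handled cleanly by the operator-norm continuity of $z\mapsto qzq$ invoked above.
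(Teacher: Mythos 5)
Your proof is correct, and it is worth noting that the paper itself offers no proof of Proposition \ref{propAR}: it is imported wholesale from Arhancet--Raynaud \cite[Section 6]{AR}, where the result is established in the broader setting of noncommutative $L^p$-spaces over von Neumann algebras, with correspondingly heavier machinery. Your argument is a self-contained, elementary alternative that exploits the standing separability assumption of the paper: a countable dense family $(y_n)$ in the positive cone of $\Ran(P)$, summed with weights $2^{-n}/\|y_n\|_p$, produces a single positive element whose support is $\sup_n s(y_n)$. Each step checks out. The reduction $\sup_n s(y_n)=s(P)$ via operator-norm continuity of $z\mapsto qzq$ (using $\|\cdot\|_\infty\le\|\cdot\|_p$) is sound; the inclusion $\ker(q)\subseteq\ker(x)$ is immediate from $s(y_n)\le q$; and the converse inclusion correctly isolates where positivity enters, namely that $(xh\mid h)=\sum_n c_n(y_nh\mid h)$ is a sum of nonnegative terms (justified by weak convergence of the partial sums), so $xh=0$ forces $y_nh=0$ for every $n$ and hence $qh=0$. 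What your approach buys is transparency and independence from \cite{AR} at the cost of separability, which the paper assumes throughout anyway; what the citation to \cite{AR} buys is validity without that hypothesis. One could fold your argument into the paper as a short direct proof with no loss.
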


The next lemma is well-known (and is implicitly used in the proof of \cite[Proposition 6.2, p.873]{MRR}).
\begin{lemma}\label{proj_sur_blocs_diago}
Let $N\geq 2$ let $H_1,\ldots, H_N$ be Hilbert spaces. We denote $H:= H_1\overset{2}{\oplus}H_2\overset{2}{\oplus}\cdots\overset{2}{\oplus} H_N$. The subspace \(\underset{i=1,\ldots, N}{\overset{p}{\bigoplus}} S^p(H_i)\) of the space $S^p(H)$ is 1-complemented. 
\end{lemma}
\begin{proof} 
Any $x\in S^p(H)$ may be written as $x = \left[ x_{kl}\right]_{1\leq k,l\leq N}$, with $x_{kl} \in S^p(H_l,H_k)$. Note that if $s_k:H\to H_k$ denote the orthogonal projection onto $H_k$ for any $1\leq k\leq N$, then $x_{kl} = s_kxs_l^*.$ For any $\theta \in \Rdb$, we consider the diagonal matrix $$D_\theta := \begin{bmatrix}
e^{i\theta} id_{H_1} & & & (0) \\
& e^{2i\theta} id_{H_2} & \\
& & \ddots & \\
(0)& & & e^{iN\theta} id_{H_N}
\end{bmatrix} \in \underset{i=1,\ldots, N}{\overset{\infty}{\bigoplus}} B(H_i).$$ We have \[D_\theta x D_\theta^* = \left[e^{i(k-l)\theta}x_{kl}\right],\] then define \[P(x) := \begin{bmatrix}
x_{11} & & & (0) \\
& x_{22} & \\
& & \ddots & \\
(0)& & & x_{NN}
\end{bmatrix}.\]The map $P:S^p(H)\to S^p(H)$ is a projection onto $\underset{i=1,\ldots, N}{\overset{p}{\bigoplus}} S^p(H_i)$. Since we can write \[P(x) = \frac{1}{2\pi}\int_{-\pi}^\pi D_\theta x D_\theta^*~ d\theta,\] we deduce \[\|P(x)\|_p = \left\|\frac{1}{2\pi}\int_{-\pi}^\pi D_\theta x D_\theta^* ~d\theta\right\|_p \leq \frac{1}{2\pi}\int_{-\pi}^\pi \|D_\theta xD_\theta^*\|_p ~d\theta = \frac{1}{2\pi}\int_{-\pi}^\pi \|x\|_p ~d\theta = \|x\|_p.\] We deduce that $P$ is contractive.
\end{proof}

In the study of positively 1-complemented subspaces, it is natural to focus on indecomposable components. The following lemma formalizes this idea.

\begin{lemma}\label{decpos1compl}
Let $1< p<\infty$, $H$ a Hilbert space, and let $X$ be a subspace of $S^p(H)$. Consider any decomposition \[X = \bigoplus^p_\alpha X_\alpha,\] with a family of indecomposable pairwise operator-disjoint subspaces $X_\alpha \subset S^p(H)$.
Then $X$ is positively 1-complemented if, and only if, every $X_\alpha$ is positively 1-complemented.
\end{lemma}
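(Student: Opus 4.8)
The plan is to prove both implications by explicitly constructing the relevant projections, with the whole argument resting on the elementary fact that for an orthogonal projection $e \in B(H)$ the compression $\Psi_e : x \mapsto exe$ is a \emph{positive contraction} on $S^p(H)$. Throughout I would write $p_\alpha := \text{s}_r(X_\alpha)$ and $q_\alpha := \text{s}_\ell(X_\alpha)$. Since the $X_\alpha$ are pairwise operator-disjoint, the relations $x^*y=0$, $xy^*=0$ force $(p_\alpha)_\alpha$ and $(q_\alpha)_\alpha$ to each be families of pairwise orthogonal projections, with $p_\alpha, q_\alpha \leq \text{s}_r(X)$, $\text{s}_\ell(X)$ respectively; this orthogonality is what will make the various corner computations collapse to a single diagonal term.

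For the implication ``$X$ positively $1$-complemented $\Rightarrow$ each $X_\alpha$ positively $1$-complemented'', let $P$ be a positive contractive projection with $\Ran(P)=X$. Being positive, $P$ is self-adjoint, so by (\ref{defsP}) we have $\text{s}_r(X)=\text{s}_\ell(X)=s(P)$, and Proposition \ref{propAR} supplies a positive $x_0 \in X$ with $s(x_0)=s(P)=:s$. The \textbf{key step}, which I expect to be the main obstacle, is to prove that $p_\alpha = q_\alpha$ for every $\alpha$. Decomposing $x_0 = \sum_\beta x_{0,\beta}$ along the given decomposition and using orthogonality of the supports, one obtains $q_\alpha x_0 = x_{0,\alpha} = x_0 p_\alpha$; taking adjoints (recall $x_0=x_0^*$) also gives $p_\alpha x_0 = x_0 q_\alpha$. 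Because $x_0 \geq 0$ is injective on $\Ran(s)$ and $p_\alpha,q_\alpha \leq s$, combining the two relations shows that $p_\alpha$ commutes with $x_0^2$, hence with $x_0$; feeding this back gives $x_0 p_\alpha = q_\alpha x_0 = x_0 q_\alpha$, and injectivity yields $p_\alpha = q_\alpha =: e_\alpha$. Once this is known I set $P_\alpha := \Psi_{e_\alpha}\circ P$, a composition of positive contractions. Using $e_\alpha z_\beta e_\alpha = \delta_{\alpha\beta}\, z_\beta$ for $z_\beta \in X_\beta$ (which is exactly where $e_\alpha e_\beta = 0$ and $e_\alpha = p_\alpha = q_\alpha$ are needed), a direct check shows $P_\alpha$ fixes $X_\alpha$ pointwise and has range inside $X_\alpha$, so $P_\alpha$ is a positive contractive projection onto $X_\alpha$.

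For the converse, suppose each $X_\alpha = \Ran(P_\alpha)$ with $P_\alpha$ positive and contractive. Positivity forces $X_\alpha$ self-adjoint, so $e_\alpha := \text{s}_r(X_\alpha) = \text{s}_\ell(X_\alpha)$, the $e_\alpha$ are pairwise orthogonal, and $X_\alpha \subseteq e_\alpha S^p(H) e_\alpha$. I would first replace $P_\alpha$ by the localized map $\tilde P_\alpha := P_\alpha \circ \Psi_{e_\alpha}$, which one checks is again a positive contractive projection onto $X_\alpha$, now satisfying $\tilde P_\alpha = \tilde P_\alpha\circ\Psi_{e_\alpha}$. Then I define $P := \sum_\alpha \tilde P_\alpha$. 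Since the $\tilde P_\alpha(x)$ lie in the operator-disjoint spaces $X_\alpha$, the norm identity (\ref{relation_norm_disjoint}) together with the estimate $\sum_\alpha \|\tilde P_\alpha(x)\|_p^p \leq \sum_\alpha \|e_\alpha x e_\alpha\|_p^p \leq \|x\|_p^p$ shows the series converges in $S^p(H)$ and $P$ is a contraction. Evaluating on $x = \sum_\beta z_\beta \in X$ gives $\tilde P_\alpha(x)=z_\alpha$, so $P$ is the identity on $X$ with range $X$, hence a projection onto $X$. Finally, for $x \geq 0$ each $\tilde P_\alpha(x) = P_\alpha(e_\alpha x e_\alpha)$ is positive, and these positive elements sit in mutually orthogonal corners, so their sum $P(x)$ is positive; thus $P$ is a positive contractive projection onto $X$.

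In summary, I expect the support-coincidence $p_\alpha = q_\alpha$ to be the only genuinely delicate point: it is precisely what permits replacing the non-positive ``off-diagonal'' compression $x \mapsto q_\alpha x p_\alpha$ by the positive compression $\Psi_{e_\alpha}$, and it is where positivity of $P$ enters essentially, through Proposition \ref{propAR}. Everything else is routine bookkeeping with orthogonal corners and the additivity (\ref{relation_norm_disjoint}) of the $S^p$-norm across operator-disjoint summands.
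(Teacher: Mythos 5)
Your proof is correct and follows essentially the same strategy as the paper: both directions hinge on Proposition \ref{propAR} producing a full-support positive element of $X$, which forces $\text{s}_r(X_\alpha)=\text{s}_\ell(X_\alpha)=:e_\alpha$ and then lets one compress $P$ to the corners $e_\alpha\,\cdot\,e_\alpha$ (forward direction), respectively sum the localized projections $P_\alpha\circ\Psi_{e_\alpha}$ (converse). The only, harmless, divergence is the route to $p_\alpha=q_\alpha$: you derive it from the commutation of $p_\alpha$ with $x_0^2$, whereas the paper shows directly that each component $x_{0,\alpha}$ of the decomposition of $x_0$ equals $|x_{0,\alpha}|$, hence is positive with coinciding left and right supports.
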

\begin{proof}
Assume that the subspace $X$ of the space $S^p(H)$ is positively 1-complemented, then every $X_\alpha$ is 1-complemented. We need to show that it is positively 1-complemented. Set \[H':= \Ran(\operatorname{s}(X)) \quad r_\alpha := \text{s}_r(X_\alpha), \quad l_\alpha := \text{s}_\ell(X_\alpha), \quad H_\alpha := \Ran(r_\alpha),\quad K_\alpha :=\Ran(l_\alpha).\] 
By Proposition \ref{propAR}, there exists a positive element $x\in X$, such that \(\operatorname{s}(x) = \operatorname{s}(X).\)
Then for each $\alpha$, there exists $x_\alpha \in X_\alpha$ such that \[x = \sum_\alpha x_\alpha.\]
Then, we have \(x_\alpha = l_\alpha x.\) Since $K_\alpha \subset H'$ and $l_\alpha$ is the orthogonal projection onto $K_\alpha$, it follows that 
\[\overline{\Ran(x_\alpha)} = \overline{l_\alpha(H')} = K_\alpha.\]
Thus, we obtain that \[\text{s}_\ell(x_\alpha) = l_\alpha.\]
Similarly, \[\text{s}_r(x_\alpha) = r_\alpha.\]
Furthermore, since the elements $x_\alpha$ and $x_\beta$ are disjoint for $\alpha \neq \beta$, we have \[\sum_\alpha x_\alpha = x = |x| = \sum_\alpha |x_\alpha|,\]
which implies $x_\alpha = |x_\alpha|$. Hence each \(x_\alpha\) is positive, and \[r_\alpha = l_\alpha := s_\alpha.\]
It follows that \(X_\alpha = s_\alpha X s_\alpha\), so this subspace is positively 1-complemented.

For the converse, suppose that
\[
X = \bigoplus^p_\alpha X_\alpha,
\]
where each \(X_\alpha \subset S^p(H_\alpha)\) is a positively 1-complemented subspace, with \(H_\alpha = \operatorname{Ran}(\operatorname{s}(X_\alpha))\), and let
\[
P_\alpha : S^p(H_\alpha) \to S^p(H_\alpha)
\]
be the corresponding contractive positive projection onto \(X_\alpha\). Let \(s_\alpha : H \to H_\alpha\) denote the orthogonal projection onto $H_\alpha$. Then \[P : x\in S^p(H) \mapsto \sum_\alpha P_\alpha(s_\alpha x s_\alpha^*)\] is the contractive projection onto $X$, and the latter is positive. Indeed, for any $x\in S^p(H)$, and any finite set of indices $A$, denote $S_A := \sum\limits_{\alpha\in A} s_\alpha$, the orthogonal projection onto the space $$\bigoplus^2_{\alpha\in A} H_\alpha \subset H.$$ Since the elements $P_\alpha(s_\alpha xs_\alpha^*)$ and $P_\beta(s_\beta xs_\beta ^*)$ (resp. $s_\alpha xs_\alpha^*$ and $s_\beta xs_\beta ^*$) are disjoint for $\alpha\neq \beta$, and since each $P_\alpha$ is contractive, we have \[\left\|\sum_{\alpha\in A}P_\alpha(s_\alpha x s_\alpha^*)\right\|_p^p = \sum_{\alpha \in A}\|P_\alpha(s_\alpha x s_\alpha^*)\|_p^p \leq \sum_{\alpha \in A}\|s_\alpha x s_\alpha^*\|_p^p = \left\|\sum_{\alpha \in A}s_\alpha x s_\alpha^*\right\|_p^p.\] Using Lemma \ref{proj_sur_blocs_diago} with the embedding $\underset{\alpha\in A}{\overset{p}{\bigoplus}} S^p(H_\alpha) \subset S^p\left(\underset{\alpha\in A}{\overset{2}{\bigoplus}} H_\alpha\right)$, we obtain \[\left\|\sum_{\alpha \in A}s_\alpha x s_\alpha^*\right\|_p^p \leq \left\|S_A xS_A^*\right\|_p^p \leq \|x\|_p^p.\] We deduce, using Cauchy's criteria, that for any $x\in S^p(H)$, the sum $\sum\limits_\alpha P_\alpha(s_\alpha x s_\alpha^*)$ converges in $S^p(H)$ and that the resulting map $P : S^p(H)\to S^p(H)$ is contractive. The positivity of $P$ follows that fact that each $P_\alpha$ and each $x\mapsto s_\alpha x s_\alpha ^*$ is positive.

\end{proof}


\medskip

Let $H$ and $H'$ be Hilbert spaces, and let $1\leq p<\infty$. We say that two closed subspaces $X\subset S^p(H)$ and $Y\subset S^p(H')$, which are invariant under $*$, are \textit{positively} equivalent, denoted by \begin{equation*}
    X\simp Y,
\end{equation*} if there exists a partial isometry $U : H'\to H$ such that \begin{equation}
    X = UYU^* \quad \mbox{ and }\quad Y = U^*XU.
\end{equation}

In this case, $X$ is positively 1-complemented if, and only if $Y$ is. To see this, just apply (\ref{proj_equiv_spaces}). As for the relation $\sim$ (see Lemma \ref{equiv_unit}), we can choose the map $U$ such that \[UU^* = \text{s}_r(X) = \text{s}_\ell(X),\quad U^*U = \text{s}_r(Y) = \text{s}_\ell(Y).\] As in Remark \ref{rel_equiv}, we can show that \(\simp\) is an equivalence relation.

\medskip

We now turn our attention to the classification of positively 1-complemented, indecomposable subspaces. Theorem \ref{THM1} below will be proved in the next two sections. This will be achieved through a blend of analytical and synthetic methods. For each of the six types of indecomposable 1-complemented subspaces, we examine which ones are also positively 1-complemented. An important part of Theorem \ref{THM1} is that there is no AF-Hilbertian positively 1-complemented subspace (see type 4 in Theorem \ref{thmAF}).

\begin{theorem}\label{THM1}
Let $1\leq p <\infty$, let $H$ be a Hilbert space, and let $X$ be an indecomposable subspace of the space $S^p(H)$ . Then $X$ is positively 1-complemented in $S^p(H)$ if, and only if, $X$ is positively equivalent to one of the following spaces : 

- The space $O\S^p_I \otimes a$, where $O\in \S_I$ is a symmetric unitary operator,

- The space $O\A^p_I \otimes a$, where $O\in \A_I$ is an anti-symmetric unitary operator,

- The space $\{(w\otimes a_1,w^\top\otimes a_2),\quad w\in S^p_I\}$,

- The space $\{(vx\otimes a_1, \sigma(v)\sigma(x)\otimes a_2),\quad x\in \F_N^p\}$, where $v\in\F_N$ is a unitary operator,

- The space $v\E_{2N}^p\otimes a$, where $v\in\E_{2N}$ is a unitary operator,

with $a,a_1\in S^p(H_1)$, $a_2\in S^p(H_2)$ some positive injective operators on Hilbert spaces $H_1,~H_2$, $I$ a countable set, and $N\geq 2$.
\end{theorem}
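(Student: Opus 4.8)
The plan is to follow the split suggested by the organization into Sections 5 and 6: prove the statement first for $p\neq 2$ using the Arazy--Friedman classification, and then deduce the case $p=2$ from it. In both cases the two implications must be addressed. The common starting remarks are that a positive contraction is self-adjoint in the sense of (\ref{defsP}), so that $X=\Ran(P)$ is a self-adjoint subspace, and that by Proposition \ref{propAR} there is a positive $x_0\in X$ with $s(x_0)=s(P)$. After compressing $H$ to $\Ran(s(P))$ I may assume $X$ non-degenerate, so by Lemma \ref{equiv_unit} every equivalence $\sim$ is implemented by unitaries and every positive equivalence $\simp$ by a single conjugation $U\cdot U^{*}$.

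\emph{Forward implication, $p\neq 2$.} Since $X$ is $1$-complemented and indecomposable, Theorem \ref{thmAF} gives a model $Y$ among the six Arazy--Friedman types with $X\sim Y$; after the compression above and Lemma \ref{equiv_unit} I may take $U,V$ unitary so that $X=VYU^{*}$. The task is, type by type, to upgrade this two-sided equivalence into a one-sided positive equivalence and to extract the unitary twist. The mechanism is uniform: self-adjointness of $X$ together with that of the model forces a compatibility between $U$ and $V$, so that the unitary $O:=U^{*}V$ on the model space normalizes $Y$; being forced to respect the transpose-symmetry (types $1,2,3$) or the spin structure (types $5,6$) defining $Y$, it lies in $\S_I$, $\A_I$, $\F_N$ or $\E_{2N}$ respectively, and absorbing it produces the twisted shapes $O\S^p_I\otimes a$, $O\A^p_I\otimes a$, and so on. For the two paired types (rectangular and even spinorial) one uses in addition that, for a block-diagonal subspace of a single $S^p(H)$ to be self-adjoint, the two blocks must be exchanged by the adjoint; this both rigidifies the shape (forcing $|I|=|J|$ in the rectangular case, whence the square model $S^p_I$) and dictates how the twist distributes over the two blocks through $\tau$ and $\sigma$. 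The delicate point is type $4$: I expect to show that no self-adjoint representative of an AF-Hilbertian space of dimension $N\geq 2$ can carry a full-support positive element, because the operators $\varphi_m(t)$ shift strictly between the distinct graded pieces $\H^{\wedge(m-1)}$ and $\H^{\wedge m}$ of the Fock space and are thus purely off-diagonal; this contradicts Proposition \ref{propAR} and rules type $4$ out entirely.

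\emph{Reverse implication.} Here I would write down explicit positive contractive projections. For the (anti)symmetric families the map $R(z)=\tfrac12\bigl(z+O\tau(z)O^{*}\bigr)$ has range $O\S^p_I$ (resp. $O\A^p_I$) whenever $O$ is a symmetric (resp. anti-symmetric) unitary; since $\tau$ is an isometry sending positive operators to positive operators and conjugation by the unitary $O$ also preserves positivity, $R$ is a positive contraction, and these twisted spaces genuinely contain positive elements (this is precisely what the twist achieves, even for the anti-symmetric case, where $\A^p_I$ itself has none). Analogous averaging projections built from $\tau$ and from the structural map $\sigma$ of Section \ref{sectionDescr 1compl} handle the rectangular and spinorial families. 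To incorporate the tensor factors $a,a_1,a_2$ I would invoke Remark \ref{Totimesphi}(2): tensoring such a projection with a positive contractive linear form normalized from the positive operators $a_i$ keeps the map a positive contraction. The $JC^*$-triple results of Section \ref{sectionJC*triple}, in particular that unital triple-homomorphisms send unitaries to unitaries and are self-adjoint (Lemma \ref{JCautoadj}), are used to confirm that the twisted models are self-adjoint.

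\emph{Case $p=2$.} Given a positive contractive (hence orthogonal) projection $P$ on $S^2(H)$ with indecomposable range $X$, I would transfer the problem to some $S^p$, $p\neq 2$, using the full-support positive element $x_0$ of Proposition \ref{propAR} to perform an Ando-type change of density: $x_0$ furnishes a weight implementing an isometry between $X\subset S^2$ and the corresponding $p$-version of $X$, under which $P$ becomes a positive contractive projection to which the already established $p\neq 2$ classification applies. Since the five model families are defined uniformly in $p$, this identifies $X$ as one of them. I expect the genuine difficulties of the whole argument to be concentrated in two places: excluding the AF-Hilbertian type at $p\neq 2$, and making the $p=2$ transfer rigorous, since the $S^2$ and $S^p$ norms differ and one must verify that the density produced by $x_0$ preserves both contractivity and positivity of the transferred projection.
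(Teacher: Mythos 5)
Your overall architecture matches the paper's: reduce to the non-degenerate case, use Theorem \ref{thmAF} together with Proposition \ref{propAR} to extract a unitary twist type by type for $p\neq 2$, build explicit positive projections for the converse, and transfer $p=2$ to $p\neq 2$ via a change of density built from a full-support positive element. However, your argument for excluding type $4$ has a genuine flaw. You claim the contradiction with Proposition \ref{propAR} comes from the maps $\varphi_m(t)$ being ``purely off-diagonal'' between the graded pieces $\H^{\wedge(m-1)}$ and $\H^{\wedge m}$. That reasoning proves too much: the model spaces $\A^p_I$, $\F_N^p$ in its paired form, and $AH(N)^p$ are all ``off-diagonal'' or contain no positive elements, yet conjugating the two sides by \emph{different} unitaries $U\neq V$ (which the equivalence $\sim$ permits) can and does produce subspaces of $S^p(H)$ containing full-support positive elements --- this is exactly the mechanism behind the twists $O\A^p_I$ and $v\E^p_{2N}$ in the statement. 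To rule out type $4$ one must show that no choice of $U,V$ works, and the paper does this by observing that a positive injective $x=V_m(\varphi_m(t_0)\otimes a_m)U_m^*$ forces $\varphi_m(t_0)$ to be a \emph{bijection} between the finite-dimensional spaces $\H^{\wedge(m-1)}$ and $\H^{\wedge m}$; this requires $\binom{N}{m-1}=\binom{N}{m}$, i.e.\ $N$ odd and $m=\tfrac{N+1}{2}$, and even then the rank of $\varphi_m(t_0)^*\varphi_m(t_0)$ is at most $\binom{N-1}{m-1}<\binom{N}{m-1}$, a contradiction. Without this dimension-and-rank argument the exclusion of type $4$ does not go through.

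Two further points are only gestured at. First, for $p=2$ you describe the change of density as ``an isometry between $X\subset S^2$ and the corresponding $p$-version of $X$''; in the paper the map $V_p(h^{1/p}xh^{1/p})=h^{1/p-1/2}P(h^{1/2}xh^{1/2})h^{1/p-1/2}$ is not an isometry on $X$, and its contractivity on all of $S^p(H)$ is obtained by first proving the $p=1$ case (via positivity, $P(h)=h$ and $P^*=P$) and then Stein interpolating between $p=1$ and $p=2$; one must also separately prove that the transferred range $X_q$ is non-degenerate and \emph{indecomposable} (Proposition \ref{XpXq}), which is not automatic. Second, identifying $X$ from the classification of $X_q$ requires the duality map $N_q$ and the $JC^*$-triple functional calculus of Proposition \ref{propfunctiononJC} (as in Propositions \ref{fincas126} and \ref{fincas35}); the five model families are not literally ``defined uniformly in $p$'' under this correspondence --- the operators $a$ get replaced by $a^{q-1}$ and $a^{q/2}$ --- so this step needs the explicit computation.
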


\section{Proof of Theorem \ref{THM1}, for $p\neq 2$.}\label{sectionProofpneq2}

This section is mostly devoted to prove Theorem \ref{THM1} for $1<p\neq 2<\infty$. The case $p=1$ will be treated at the end of this section. We rely on Theorem \ref{thmAF} (see also the two lines following it) and Lemma \ref{linear_form_cp} that we will use repeatedly.

\begin{remark}\label{rem_before_proof}
\begin{enumerate}
    \item Let $1< p\neq 2<\infty$, and $X$ be an indecomposable positively 1-complemented subspace of $S^p(H)$. Let $H'$ be the space $\Ran(\operatorname{s}(X))$, and let $s\in B(H,H')$ be the map induced by $\operatorname{s}(X).$ Then $X$ is positively equivalent to $sXs^* \subset S^p(H')$, and this space is non-degenerate. This shows that it is sufficient to describe the indecomposable \textit{non-degenerate} positively 1-complemented subspace of $S^p(H)$.
    \item It is also important to observe that, according to Lemma \ref{equiv_unit}, if $X\subset S^p(H)$, and $Y\subset S^p(K)$ are non-degenerate and equivalent subspaces, then there exist \textit{unitaries} $U,V : K\to H$ such that \(X = VYU^*.\)
\end{enumerate}
\end{remark}

Here is the strategy of the proof. We consider $X$ an indecomposable non-degenerate and positively 1-complemented subspace of the space $S^p(H)$, for $1< p\neq 2<\infty$. According to Theorem \ref{thmAF}, this subspace $X$ is equivalent to a subspace $Z$, where $Z$ is in one of the 6 distinct forms described in Theorem \ref{thmAF}. For each case, we exploit the positivity of the contractive projection to obtain a more precise description of $X$.  
 
\subsection{Type 1.} 
Here, we consider the case where $X\subset S^p(H)$ is of type 1 (see Definition \ref{type123}), and non-degenerate. Assume that $X$ is positively 1-complemented. Note that if $I$ is a countable set, the space $\S_I^p \subset S^p_I$ is non-degenerate; moreover, if $a\in S^p(H_1)$ is a positive injective operator, then the space $\S_I^p\otimes a\subset S^p(\ell^2_I(H_1))$ is also non-degenerate. Using Remark \ref{rem_before_proof} (2), we can describe $X$ as follows.
There exist $a\in S^p(H_1)$ a positive injective operator on a Hilbert space $H_1$, $I$ a countable index set, and two unitary operators $U$ and $V$ :  $\ell^2_I(H_1)\to H$, such that \begin{equation}
    X = \left\{V(w\otimes a)U^*,\quad w\in \S^p_I\right\}.
\end{equation}
By Proposition \ref{propAR}, there is a positive element $x\in X$, such that $\operatorname{s}(x) = \operatorname{s}(P) = 1$. We write \(x\) as \[ x = V(w_0\otimes a)U^*,\] where $w_0\in \S^p_I$. Then $w_0$ is an injective symmetric operator with a dense range.
Using the identity \(x = |x|,\) and writing \[x^*x = U(w_0^*\otimes a)V^*V(w_0\otimes a)U^* = U(w_0^*w_0\otimes a^2)U^* = (U(|w_0|\otimes a)U^*)^2,\]
we obtain that \[|x| = U(|w_0|\otimes a)U^*,\] and hence that \[V(w_0\otimes a)U^* = U(|w_0|\otimes a)U^*.\]
Let $w_0 = O|w_0|$ be the polar decomposition of $w_0$, where $O\in B(\ell_I^2)$ is a unitary operator. Then \[V(O\otimes 1)(|w_0|\otimes a) = U(|w_0|\otimes a).\] By the uniqueness of polar decomposition of \(U(|w_0|\otimes a)\), we deduce that \[V(O\otimes 1) = U.\]
Furthermore, we can compute that $|w_0^\top| = (O^\top)^*|w_0|^\top O^\top$, since both side are positive and have the same square. Since $w_0\in\S^p_I$,\begin{equation}\label{decpoltranspose}
    w_0 = w_0^\top = |w_0|^\top O^\top= O^\top(O^\top)^*|w_0|^\top O^\top = O^\top|w_0^\top| = O^\top|w_0|^\top,
\end{equation} hence \[w_0 = O^\top|w_0|\] is the polar decomposition of $w_0$. We deduce, by the uniqueness of polar decomposition of $w_0$, that $O$ is a symmetric operator. Moreover, since $\top$ is an anti-$*$-isomorphism, $O^*$ is also symmetric. We may therefore replace $O$ by $O^*$.

To summarize, there is a symmetric and unitary operator $O \in B(\ell^2_I)$, such that \begin{equation}
    X = \left\{U(z\otimes a)U^*,\quad z\in O\S^p_I\right\}
\end{equation}
In other words, $X$ is positively equivalent to the space $O\S^p_I\otimes a$.

Conversely, suppose that $X \subset S^p(H)$ is positively equivalent to the space $O\S^p_I\otimes a$, where $I$ is a countable set, $a\in S^p(H_1)$ is a positive injective operator on a Hilbert space $H_1$, and $O\in \S_I$ is a symmetric unitary operator. We may assume that $\|a\|_p=1$. Since \( X \) is positively 1-complemented if and only if \( O \mathcal{S}_I^p \otimes a \) is, we may focus on determining the contractive projection onto the latter space.
We define $P_{O} : S^p_I\to S^p_I$ as the contractive projection onto the subspace $O\S^p_I$. Since the contractive projection onto $\S^p_I$ is given by $x\mapsto \dfrac{x + x^\top}{2}$, the map $P_{O}$ can be expressed as

\[P_{O} : x\in S^p_I\mapsto \dfrac{x+ Ox^\top O^*}{2},\]
which is a positive and contractive map.
Let $\varphi_a : S^p(H_1)\to \Cdb$ denote the norm one positive linear functional defined by \begin{equation}\label{defvarphia}
    \varphi_a(x) = \Tr(a^{p-1}x).
\end{equation}
According to Lemma \ref{linear_form_cp} and Remark \ref{Totimesphi} (2), the map $P_{O}\otimes\varphi_a : S^p(\ell^2_I(H_1))\to S^p_I$ is positive and contractive. Finally, the contractive projection onto the subspace $O\S^p_I\otimes a$ is given by the map \[P : x\mapsto (P_{O}\otimes\varphi_a)(x)\otimes a,\]
hence is a positive map.

\subsection{Type 2.}
We now consider the case where $X$ is a non-degenerate subspace of type 2 of $S^p(H)$ (see Definition \ref{type123}). Assume that $X$ is positively 1-complemented.  As before, the subspace $\A_I^p$ of $S^p_I$ is non-degenerate, and we have the following description:
There exist $a\in S^p(H_1)$ a positive injective operator on a Hilbert space, and unitary operators $U$ and $V$ :  $\ell^2_I(H_1)\to H$, such that \begin{equation}
    X = \left\{V(w\otimes a)U^*,\quad w\in \A^p_I\right\}.
\end{equation}
As in the previous case, there exists a unitary operator $O\in B(\ell^2_I)$ such that \[X = \left\{U(z\otimes a)U^*,\quad z\in O^*\A^p_I\right\}.\]

We now show that $O\in\A_I.$
Indeed, by construction, $O$ is the unitary operator in the polar decomposition of an element $w_0\in \A^p_I$, that is, $w_0 = O|w_0|$.
By the same calculations as before (see (\ref{decpoltranspose})), the polar decomposition of $w_0^\top$ is \begin{equation}\label{polardecsigma}
    w_0^\top = O^\top|w_0|,
\end{equation}
and since \[w_0^\top = -w_0,\]
it follows that $O$ is anti-symmetric. As in type 1, we may replace $O$ by $O^*$. Therefore, we obtain \[X\simp O\A_I^p\otimes a,\]
with a unitary $O\in \A_I$.

Conversely, consider \(X\simp O\A_I^p\otimes a\), with $I$ a countable set, $O\in \A_I$ unitary, and $a\in S^p(H_1)$ a positive injective operator. We may assume \(\|a\|_p=1\). Using the same positive linear map $\varphi_a$ as defined in (\ref{defvarphia}), and letting $P_{O} : S^p_I\to S^p_I$ denote the contractive projection onto $O\A^p_I$, the contractive projection onto $O\A^p_I\otimes a$ is given by \[P : x\mapsto (P_{O}\otimes\varphi_a)(x)\otimes a.\]
Since the contractive projection onto $\A^p_I$ is given by $x\mapsto \dfrac{x - x^\top}{2}$, and noting that $O^\top=-O$, we have the explicit form
\[P_{O} : x\in S^p_I\mapsto \dfrac{x+ Ox^\top O^*}{2},\]
which is a positive map, and therefore \(P\) is positive as well.

\subsection{Type 3.}
Assume that $X$ is a non-degenerate subspace of type 3 of $S^p(H)$. Then there exist countable index sets $I$ and $J$, Hilbert spaces $H_1$ and $H_2$ and injective and positive operators $a_1\in S^p(H_1)$, $a_2\in S^p(H_2)$ such that \begin{equation}\label{type3sansred}
    X\sim \underbrace{\left\{(w\otimes a_1, w^\top\otimes a_2),\quad w\in S^p_{I,J}\right\}}_{:=Y}.
\end{equation}
Assume, in addition, that $X$ is positively 1-complemented. We suppose that $a_1\neq 0$ and $a_2\neq 0$. The case when $a_1=0$ or $a_2=0$ is similar and will be omitted.
Since $S^p_{I,J}$ and $S^p_{J,I}$ are non-degenerate, the subspace $Y$ of the space $S^p\left(\ell^2_J(H_1)\overset{2}{\oplus}\ell^2_I(H_2),\ell^2_I(H_1)\overset{2}{\oplus}\ell^2_J(H_2)\right)$ is non-degenerate. So we have two unitary operators \[U : \ell_J^2(H_1)\overset{2}{\oplus}\ell^2_I(H_2)\to H,\quad V : \ell_I^2(H_1)\overset{2}{\oplus}\ell^2_J(H_2)\to H,\]
such that \[X = \{V(w\otimes a_1, w^\top\otimes a_2)U^*,\quad w\in S^p_{I,J}\}.\]
For $i=1,2$, we set \[\begin{array}{cc}
    U_1 = U(\cdot,0) : \ell^2_J(H_1)\to H,\quad & V_1 = V(\cdot,0) : \ell^2_I(H_1)\to H  \\
    U_2 = U(0,\cdot) : \ell^2_I(H_2)\to H,\quad & V_2 = V(0,\cdot) : \ell^2_J(H_2)\to H
\end{array}\]
Then, if we let \[\Ran(U_1) = H',\quad \Ran(U_2) = H'',\quad \Ran(V_1)=K',\quad \Ran(V_2)=K'',\]
we have \[H = H'\overset{\perp}{\oplus}H'' = K'\overset{\perp}{\oplus}K'',\]
and \begin{equation}\label{type3avecred}
    X = \{V_1(w\otimes a_1)U_1^* + V_2(w^\top\otimes a_2)U_2^*,\quad w\in S^p_{I,J}\}.
\end{equation}
By Proposition \ref{propAR}, there exists an element $x\in X, x\geq 0$, such that $\operatorname{s}(x) = 1$. For some $w_0\in S^p_{I,J}$, we have \[x =V_1(w_0\otimes a_1)U_1^* + V_2(w_0^\top\otimes a_2)U_2^*.\] Moreover, since $x$ is injective with dense range, both $w_0\otimes a_1$ and $w_0^\top\otimes a_2$ must be injective with a dense range. Indeed, consider $\xi\in \ell^2_J(H_1)$ such that $(w_0\otimes a_1)(\xi) = 0$. Since $U_1 : \ell^2_J(H_1)\to H$ is an isometry of range $H'$, the map $U_1^*$ is surjective and there exists $\eta\in H'$ such that $\xi = U_1^*(\eta)$. It follows that \[V_1(w_0\otimes a_1)U_1^*(\eta) = 0.\] Since the range of $U_2$ is $H''$, we have $\operatorname{Ker}(U_2^*) = H''^\perp = H'.$ We deduce that \[x(\eta) = V_1(w_0\otimes a_1)U_1^*(\eta) + 0 = 0.\] It follows that $\eta=0$, so $\xi = U_1^*(\eta) = 0$, and we obtain that $w_0\otimes a_1$ is injective. To show that $w_0\otimes a_1$ has a dense range, it suffices to apply the same proof to the adjoint operator $x^* = x = U_1(w_0^*\otimes a_1)V_1^* + U_2((w_0^\top)^*\otimes a_1)V_2^*$. We obtain that $w_0^*\otimes a_1$ is injective, hence $w_0\otimes a_1$ has a dense range. The proof for $w_0^\top\otimes a_2$ injective with a dense range is similar.
It follows that both $w_0$ and $w_0^\top$ are injective with dense range. Since the ranges of $V_1$ and $V_2$ are orthogonal, we have \(V_1^*V_2 = 0\). Then we may compute \begin{equation*}
\begin{split}
    x^*x &= (U_1(w_0^*\otimes a_1)V_1^* + U_2((w_0^\top)^*\otimes a_2)V_2^*)(V_1(w_0\otimes a_1)U_1^* + V_2(w_0^\top\otimes a_2)U_2^*)\\ &= U_1(w_0^*w_0\otimes a_1^2)U_1^* + U_2((w_0^\top)^* w_0^\top\otimes a^2_2)U_2^*.
\end{split}
\end{equation*} Hence, by orthogonality of the ranges of $U_1$ and $U_2$, \[|x| = U_1(|w_0|\otimes a_1)U_1^* + U_2(|w_0^\top|\otimes a_2)U_2^*.\]
Using the identities $x = |x|$, and $U_i^*U_i = 1$ for $i=1,2$, we deduce that \[U_1(|w_0|\otimes a_1) = V_1(w_0\otimes a_1),\quad U_2(|w_0^\top|\otimes a_2) = V_2(w_0^\top\otimes a_2).\]
Then, if we denote by $w_0= O|w_0|$ the polar decomposition of $w_0$, with $O\in B(\ell^2_J,\ell^2_I)$ a unitary operator, we have $w_0^\top = O^\top|w_0^\top|$ (using the same computation as in (\ref{decpoltranspose})). We deduce \[U_1 = V_1(O\otimes 1), \quad U_2 = V_2(O^\top\otimes 1).\] Note that necessarily $|I| = |J|$. 
Finally, we obtain \[X = \left\{U_1(O^*w\otimes a_1)U_1^* + U_2((O^\top)^*w^\top\otimes a_2)U_2^*,\quad w\in S^p_{I,J}\right\}.\]
By making the change of variables $z = O^*w$, and introducing the unitary operator $W_2 := U_2((O^*)^\top\otimes 1) : \ell_J^2(H_2)\to H''$, we obtain \[X = \left\{U_1(z\otimes a_1)U_1^* + W_2(z^\top\otimes a_2)W_2^*,\quad z\in S^p_J\right\}.\]
We can conclude saying that \[X\simp \left\{(z\otimes a_1,z^\top\otimes a_2),\quad z\in S^p_J\right\}.\]

Conversely, let us describe the contractive projection onto \[\left\{(w\otimes a_1,w^\top\otimes a_2),\quad w\in S^p_J\right\}\subset S^p\left(\ell^2_J(H_1)\right)\overset{p}{\oplus}S^p\left(\ell^2_J(H_2)\right) \subset S^p\left(\ell^2_J(H_1)\overset{2}{\oplus}\ell^2_J(H_2)\right),\] where $a_i \in S^p(H_i)$ is a non-zero positive and injective operator on a Hilbert space $H_i$, for $i=1,2$, and $J$ is a countable index set. We may assume that $\|a_1\|^p_p + \|a_2\|^p_p = 1$. We let $\alpha = \|a_1\|^p_p$; it follows that $1-\alpha = \|a_2\|_p^p$.
Arguing as in the second part of the proof of Lemma \ref{decpos1compl}, the contractive projection $$D : S^p\left(\ell^2_J(H_1)\overset{2}{\oplus}\ell^2_J(H_2)\right)\to S^p\left(\ell^2_J(H_1)\overset{2}{\oplus}\ell^2_J(H_2)\right)$$ onto $S^p\left(\ell^2_J(H_1)\right)\overset{p}{\oplus}S^p\left(\ell^2_J(H_2)\right)$ is positive.
For $i=1,2$, let $\varphi_i : S^p(H_i)\to \Cdb$ denote the positive linear form defined by : \[\varphi_i(y) = \dfrac{1}{\|a_i\|_p^p}\Tr(a_i^{p-1}y).\]
Then by Lemma \ref{linear_form_cp}, the map $\nu_i : S^p(\ell^2_J(H_i))\to S^p(\ell^2_J) $ defined by \[\nu_i(x) = (\operatorname{Id}_{S_J^p}\otimes\varphi_i)(x),\]
is a positive map. The map \(\nu : S^p\left(\ell^2_J(H_1)\right)\overset{p}{\oplus}S^p\left(\ell^2_J(H_2)\right)\to S^p\left(\ell^2_J(H_1)\right)\overset{p}{\oplus}S^p\left(\ell^2_J(H_2)\right)\) defined by \[\nu(x,y)= (\nu_1(x)\otimes a_1, \nu_2(y)\otimes a_2)\] is the positive and contractive projection onto the space $[S^p_J\otimes a_1] \overset{p}{\oplus}[S^p_J\otimes a_2]$. Then consider the map $Q : [S^p_J\otimes a_1] \overset{p}{\oplus}[S^p_J\otimes a_2]\to [S^p_J\otimes a_1] \overset{p}{\oplus}[S^p_J\otimes a_2]$ defined by \[Q(z_1\otimes a_1, z_2\otimes a_2) = ((\alpha z_1 + (1-\alpha)z_2^\top)\otimes a_1, (\alpha z_1^\top + (1-\alpha) z_2)\otimes a_2).\] It is straightforward to check that $Q$ is contractive and positive. According to \cite[Section 2]{MRR}, the contractive projection onto $\left\{(w\otimes a_1,w^\top\otimes a_2),\quad w\in S^p_J\right\}$ is $Q\circ \nu \circ D$. Finally, the subspace \[\left\{(w\otimes a_1,w^\top\otimes a_2),~ w\in S^p_J\right\}\] is positively 1-complemented.

Note that in the previous paragraph, if $a_1$ or $a_2$ is zero, the reasoning can be adapted. If $X\simp \{w \otimes a,\quad w\in S^p_J\}$, with $a\in S^p(H_1)$ positive injective and $J$ a countable set, then it is positively 1-complemented.

\subsection{Type 4.} We now consider the case where $X$ is a non-degenerate subspace of type 4 of the space $S^p(H)$, with dimension $N\geq 2$. The goal is to show that there is no positively 1-complemented subspace of this type. Assume that $X$ is positively 1-complemented; Proposition \ref{propAR} will lead to a contradiction. Using definitions and notations (\ref{type4}), for any integer $m\in\{1,\ldots N\}$, there exist a Hilbert space $H_m$, and $a_m\in S^p(H_m)$ some positive and injective operators (or a zero operator), such that $X$ is equivalent to \[Z:=\{(\varphi_1(t)\otimes a_1,\ldots ,\varphi_N(t)\otimes a_N),~t\in \H\} \subset \bigoplus^{p}_{1\leq m\leq N} S^p(\Lambda_N\overset{2}{\otimes} H_m),\] where $\varphi_m : \H \to S^p(\H^{\wedge(m-1)}, \H^{\wedge m})$ is the map defined in (\ref{defvarphi_m}).

We first consider the case where there is only one $a_m$ that is non-zero. Since $a_m$ is injective, the subspace $Z=\{\varphi_m(t)\otimes a_m,\quad t\in\H\}$ of the space $S^p(\H^{\wedge (m-1)}\otimes H_m, \H^{\wedge m}\overset{2}{\otimes} H_m)$ is non-degenerate.
To check that, it is sufficient to prove that \[Y_m:=\{\varphi_m(t),~t\in\H\}\subset S^p(\H^{\wedge (m-1)}, \H^{\wedge m})\] is non-degenerate.
Recall that $Y_m = \text{span}\{x_{k,m},\quad 1\leq k\leq N\}$ (see (\ref{defvarphi_m})), and let us show that \begin{equation}\label{sl4}
    \text{span}\left\{\bigcup_{1\leq k\leq N}\Ran(x_{k,m})\right\}=\H^{\wedge m}.
\end{equation}
Let $(e_k)_{1\leq k\leq N}$ be the canonical basis of $\H$, then the family \[\{e_{i_1}\wedge\cdots \wedge e_{i_m},\quad 1\leq i_1 < i_2 < \cdots  < i_m \leq N\}\] is an orthonormal basis of $\H^{\wedge m}$. For any $v = e_{i_1}\wedge\cdots \wedge e_{i_m} $ in this family, we have \[v = x_{i_1,m}(e_{i_2}\wedge\cdots \wedge e_{i_m}) \in \Ran(x_{i_1,m}).\]
The equality (\ref{sl4}) follows, and we deduce that \[\text{s}_\ell(Y_m) = 1.\]
For the right support, we can use a similar argument. 
Let $u = e_{i_1}\wedge\cdots \wedge e_{i_{m-1}}$ be a vector in the standard orthonormal basis of $\H^{\wedge (m-1)}$. Then for $1\leq i_m\leq N$ such that $i_m \notin \{i_1,\ldots , i_{m-1}\}$, \[u = x_{i_m, m}^*(e_{i_m}\wedge u)\in \Ran(x^*_{i_m,m}) = \operatorname{Ker}(x_{i_m,m})^\perp.\]
Consequently, \[\text{span}\left\{\bigcup_{1\leq k\leq N} Ker(x_{k,m})^\perp\right\} = \H^{\wedge(m-1)},\] and hence \[\text{s}_r(Y_m) = 1.\]
By applying point (2) of Remark \ref{rem_before_proof}, we deduce that there exist unitary operators \[U_m : \H^{\wedge (m-1)}\overset{2}{\otimes} H_m\to H \text{ and } V : \H^{\wedge m}\overset{2}{\otimes} H_m\to H,\] such that \[X = \{V_m(\varphi_m(t)\otimes a_m)U_m^*,\quad t\in \H\}.\]
By Proposition \ref{propAR}, there exists a positive element $x\in X$, such that $\operatorname{s}(x) = 1$. This element can be written as $x = V_m(\varphi_m(t_0)\otimes a_m)U_m^*$, with some $t_0\in \H.$ It implies that $\varphi_m(t_0)\otimes a_m$ is injective with dense range. The proof of this fact is similar to the one appearing in Type 4.  Since $\H^{\wedge m}$ is finite-dimensional, we deduce that $\varphi_m(t_0)\in Y_m\subset S^p(\H^{\wedge (m-1)}, \H^{\wedge m})$ is a bijective map.

To reach a contradiction, we prove that there is no bijective element in $Y_m$. Assume that $y_m$ is a bijective element of $Y_m$. Then it can be expressed as \[y_m := \sum_{k=1}^N t_k x_{k,m} : \H^{\wedge (m-1)}\to \H^{\wedge m},\quad t =(t_k)_k\in\H.\]
Necessarily, \(\text{dim}(\H^{\wedge(m-1)}) = \text{dim}(\H^{\wedge m}),\) hence $N$ must be odd and $m=\frac{N+1}{2}.$
Define $z_m = y_m^*y_m$. Then $z_m$ is a bijection. This yields a contradiction since the rank of $z_m$ is equal to $\binom{N-1}{m-1} < \binom{N}{m-1} = \text{dim}(\H^{\wedge (m-1)})$. To check this, we follow the computations from the proof of \cite[Proposition 2.4, p.18]{AF2}. Define $y:= \sum_{k=1}^N t_k c_k \in B(\La_N)$. By relations (\ref{CAR}), we have \[y^*y+yy^* = \|t\|^2_2 \operatorname{Id}_{\La_N} \text{ and } yy^*y = \|t\|_2^2y.\] 
Restricting these equalities to $\H^{\wedge(m-1)}$ for $1\leq m\leq N$, we obtain \begin{equation}\label{tracey_m}
    y_1^*y_1 = \|t\|^2_2,~ y_m^*y_{m-1}+y_{m-1}y_m^* = \|t\|_2^2 \operatorname{Id}_{\mathcal{H}^{\wedge( m-1)}},~y_Ny_N^* = \|t\|_2^2 \operatorname{Id}_{\mathcal{H}^{\wedge N}}, ~~1<m<N,
\end{equation}
and \begin{equation}\label{diagozm}
    y_my_m^*y_m=\|t\|^2_2 y_m,\quad 1\leq m\leq N.
\end{equation}
By induction using (\ref{tracey_m}), we obtain \[\Tr(z_m)=\Tr(y_m^*y_m)= \|t\|_2^2 \binom{N-1}{m-1},\quad 1\leq m\leq N,\] and by (\ref{diagozm}), \[z_m^2 = \|t\|_2^2 z_m,\quad 1\leq m\leq N.\]
We deduce that the spectrum of $z_m$ consists of $\|t\|_2^2$ with multiplicity $\binom{N-1}{m-1}$ and of 0, which yields the value of the rank of $z_m$.

We now turn to the case of several blocks.
Assume that the operators $a_i$ are not all zero, and denote by $a_{i_1}$,\ldots , $a_{i_k}$ those that are non-zero. Then $X$ is equivalent to the space $Z$, defined as \[Z= \{(\varphi_{i_1}(t)\otimes a_{i_1},\ldots ,\varphi_{i_k}(t)\otimes a_{i_k}),~t\in \H\} \subset \bigoplus^{p}_{1\leq j\leq k} S^p(\H^{\wedge (i_j -1)}\overset{2}{\otimes} H_{i_j}, \H^{\wedge i_j}\overset{2}{\otimes} H_{i_j}).\]
By the first part of the proof, each \(\{\varphi_{i_j}(t)\otimes a_{i_j},~t\in\H \}\) is non-degenerate. This readily implies that $Z$ is non-degenerate.
Now, using a reasoning similar to the one at the beginning of Subsection 4.3, we obtain two orthogonal decompositions of $H$ : \[H = H_{i_1}'\overset{\perp}{\oplus}\cdots \overset{\perp}{\oplus}H'_{i_k} = K_{i_1}'\overset{\perp}{\oplus}\cdots \overset{\perp}{\oplus}K'_{i_k},\]
and unitaries \[U_{i_j} : \H^{\wedge (i_j -1)}\overset{2}{\otimes} H_{i_j}\to H'_{i_j} \quad \text{and}\quad V_{i_j} : \H^{\wedge i_j}\overset{2}{\otimes} H_{i_j} \to K'_{i_j},\quad 1\leq j\leq k,\]
such that \[X = \left\{\sum_{j=1}^k V_{i_j}(\varphi_{i_j}(t)\otimes a_{i_j})U_{i_j}^*,\quad t\in \H\right\}.\]
Now, using Proposition \ref{propAR}, we have a positive element $x\in X$ such that $\operatorname{s}(x) =1$. We may write this element as \[x = \sum_{j=1}^k V_{i_j}(\varphi_{i_j}(t_0)\otimes a_{i_j})U_{i_j}^*,\]
 with $t_0\in \H$. Since $x$ is positive and injective, and since the operators $U_{i_j}$ and $V_{i_j}$ are unitaries, we deduce that the operator $\varphi_{i_j}(t_0)\otimes a_{i_j}$ is injective with dense range, for each $j$. It implies that $\varphi_{i_j}(t_0)$ is bijective, leading to a contradiction, as seen before. We conclude that there is no positively 1-complemented subspace of type 4.

\subsection{Type 5.} We now turn to the spinorial type, of even dimension. Let $X$ be a non-degenerate spinorial space of dimension $2N\geq 4$ (see Definition \ref{type56}). Assume that it is positively 1-complemented. 
Since $1\in \F_N^p$, the subspace \(\F_N^p\) of \(S^p(\La_N)\) is non-degenerate. Recall that the notation $\F_N^p$ refers to the space $\F_N \subset B(\La_N)$ equipped with the norm of $S^p(\La_N)$. Arguing as in type 3, we can describe $X$ as follows.
The space $H$ can be decomposed as \[H = H'\overset{\perp}{\oplus}H'' = K'\overset{\perp}{\oplus}K'',\]
there exist two positive and injective operators on Hilbert spaces, denoted by $a_1\in S^p(H_1)$ and $a_2\in S^p(H_2)$ that verify \[\|a_1\|_p^p + \|a_2\|_p^p =1,\] and unitaries \[U_1 : \La_N\overset{2}{\otimes} H_1 \to H',~V_1 : \La_N\overset{2}{\otimes} H_1 \to K',~U_2 : \La_N\overset{2}{\otimes} H_2\to H'',~V_2 : \La_N\overset{2}{\otimes} H_2\to K''\]
such that \[X = \left\{V_1(z\otimes a_1)U_1^* + V_2(\sigma(z)\otimes a_2)U_2^*,\quad z\in \F_N^p\right\}.\]
The definition of the map $\sigma : \F_N \to \F_N$ is given in (\ref{defsigma}).
In the sequel, we assume that $a_1$ and $a_2$ are non-zero. If one of them is zero, all that follows adapts accordingly.
By Proposition \ref{propAR}, there is a positive element $x\in X$, such that $\operatorname{s}(x)=1$. This element can be written as \[x = V_1(z_0\otimes a_1)U_1^* + V_2(\sigma(z_0)\otimes a_2)U_2^*,\]
where $z_0\in\F^p_N.$
As in type 3, the injectivity of $x$ implies that the elements $z_0$ and $\sigma(z_0)$ are injective, so they are bijective elements of $\F_N$.
Using the identity \(x = |x|\), we obtain \begin{equation}\label{etapecalculcas5}
    V_1(z_0\otimes a_1) = U_1(|z_0|\otimes a_1)\quad \text{ and }\quad V_2(\sigma(z_0)\otimes a_2) = U_2(|\sigma(z_0)|\otimes a_2).
\end{equation}
We can deduce that $U_1$ and $V_1$ have the same range, that is, $H'=K'$. Similarly, $H''=K''$.
Let $z_0 = u|z_0|$ and $\sigma(z_0) = w|\sigma(z_0)|$, be the polar decompositions of $z_0$ and $\sigma(z_0)$ respectively, where $u,w\in B(\La_N)$ are unitary operators. Recall from Section \ref{sectionJC*triple} that the space $\F_N$ is a $JC^*$-triple and note that the map $\sigma$ is a triple-homomorphism on $\F_N$.  This implies that the space \[\{(x,\sigma(x)),~x\in\F_N\}\subset B(\La_N)\overset{\infty}{\oplus}B(\La_N)\] is a $JC^*$-triple. Consider the bijective element $(z_0,\sigma(z_0))$ with its polar decomposition \( (z_0,\sigma(z_0)) = (u,w)(|z_0|,|\sigma(z_0)|)\). We may apply Proposition \ref{propfunctiononJC} on it, with a continuous function equal to 1 on the spectrum of this element and equal to 0 at 0. We obtain \[(u,w)\in \{(x,\sigma(x)),~x\in\F_N\},\] i.e. $w=\sigma(u)$. Now, using identities (\ref{etapecalculcas5}) again, we obtain \[V_1^*U_1 = u\otimes 1,\quad \text{ and }\quad V_2^*U_2 = \sigma(u)\otimes 1.\]
Finally, we can give the following description for $X$,
\begin{equation*}
    \begin{split}
        X &= \left\{U_1(u^*x\otimes a_1)U_1^*  + U_2(\sigma(u)^*\sigma(x)\otimes a_2)U_2^*,~x\in\F_N^p\right\},
    \end{split}
\end{equation*}
Note that $\F_N$ is a unital $JC^*$-triple, and $\sigma$ is a unital triple-homomorphism, so we may apply Lemma \ref{JCautoadj}. Since $u\in \F_N$, we have $u^*\in\F_N$, and $\sigma$ is an adjoint preserving map. By setting $v=u^*$, we conclude that there is a unitary element $v\in \F_N$ such that \[X\simp \left\{(vx\otimes a_1, \sigma(v)\sigma(x)\otimes a_2),\quad x\in \F_N^p \right\}.\]

Conversely, we describe the contractive projection onto \[Y_v :=\left\{(vx\otimes a_1, \sigma(v)\sigma(x)\otimes a_2),\quad x\in \F_N^p \right\}\subset S^p\left((\La_N\overset{2}{\otimes} H_1)\overset{2}{\oplus}(\La_N\overset{2}{\otimes} H_2)\right),\] where $v\in\F_N$ is a unitary operator and $a_i$ is a positive operator on a Hilbert space $H_i, ~i=1,2$. Let us show that this projection is positive. We may assume, without loss of generality, that $\|a_1\|_p^p + \|a_2\|_p^p = 1$. We also assume that both $a_1$ and $a_2$ are non-zero, otherwise the following argument adapts readily.
Similarly to the converse case of type 3, the contractive projection $D$ from $S^p\left((\La_N\overset{2}{\otimes} H_1)\overset{2}{\oplus}(\La_N\overset{2}{\otimes} H_2)\right)$ onto $S^p(\La_N\overset{2}{\otimes} H_1)\overset{p}{\oplus} S^p(\La_N\overset{2}{\otimes} H_2)$ is positive. Then, using the same notations as in the converse case of type 3, we consider the positive linear functionals $\varphi_i : S^p(H_i)\to \Cdb$, and the positive maps $\nu_i = \operatorname{Id}_{S^p(\La_N)}\otimes \varphi_i : S^p(\La_N\overset{2}{\otimes} H_i)\to S^p(\La_N)$. The map $\nu : S^p(\La_N\overset{2}{\otimes} H_1)\overset{p}{\oplus} S^p(\La_N\overset{2}{\otimes} H_2)\to S^p(\La_N\overset{2}{\otimes} H_1)\overset{p}{\oplus} S^p(\La_N\overset{2}{\otimes} H_2)$ defined by \[\nu(x,y) = (\nu_1(x)\otimes a_1, \nu_2(y)\otimes a_2)\] is the positive and contractive projection onto the space $S^p(\La_N)\otimes a_1 \overset{p}{\oplus} S^p(\La_N)\otimes a_2$.
Denote by $Q : S^p(\La_N)\to S^p(\La_N)$ the contractive projection onto $\F_N^p$. By \cite[Lemma 5.3, p.867]{MRR}, it is a unital and positive projection. Note that this lemma shows that for every $1\leq q\leq \infty$, \[\|Q : S^q(\La_N)\to S^q(\La_N)\|=1.\] We use it with $q=p$ and $q=\infty$. The proof of \cite[Proposition 5.7, p.869]{MRR} provides the expression of the projection $R$ from $[S^p(\La_N)\otimes a_1]\overset{p}{\oplus} S^p(\La_N)\otimes a_2]$ onto the space \(\{(x\otimes a_1, \sigma(x)\otimes a_2),\quad x\in\F_N^p\}\). Denote \(\alpha = \|a_1\|^p_p\). Then \[R(z_1\otimes a_1, z_2\otimes a_2) = \left((\alpha Q(z_1) + (1-\alpha) \sigma(Q(z_2)))\otimes a_1, (\alpha \sigma(Q(z_1)) + (1- \alpha) Q(z_2) )\otimes a_2 \right).\]
Now, using the identity \[Y_v = \left\{(vx\otimes a_1, \sigma(v)\sigma(x)\otimes a_2),\quad x\in \F_N^p \right\} = (v\otimes 1,\sigma(v)\otimes 1)\{(x\otimes a_1, \sigma(x)\otimes a_2),\quad x\in\F_N^p\},\]
where $(v\otimes 1,\sigma(v)\otimes 1) \in B((\La_N\overset{2}{\otimes} H_1)\overset{2}{\oplus}(\La_N\overset{2}{\otimes} H_2))$ is a unitary operator, we can write the expression of the contractive projection $R_v$ onto \(Y_v \subset S^p((\La_N\overset{2}{\otimes} H_1)\overset{2}{\oplus}(\La_N\overset{2}{\otimes} H_2))\) : 
\begin{equation*}
    \begin{split}
        R_v(z_1\otimes a_1, z_2\otimes a_2) &= (v\otimes 1,\sigma(v)\otimes 1)R(v^*z_1\otimes a_1, \sigma(v)^*z_2\otimes a_2)\\
        &= (v(\alpha Q(v^*z_1) + (1-\alpha) \sigma(Q(\sigma(v)^*z_2)))\otimes a_1 ,\\&\hspace{3cm} \sigma(v)(\alpha \sigma(Q(v^*z_1)) + (1-\alpha) Q(\sigma(v)^*z_2))\otimes a_2).
    \end{split}
\end{equation*}
To see why $R_v$ is a positive map, we use that the element \((z_1\otimes a_1, z_2\otimes a_2)\) is positive if, and only if $(z_1,z_2)$ is positive. Then it suffices to show that the map \(\Tilde{R}_v : B(\La_N)\overset{\infty}{\oplus}B(\La_N)  \rightarrow  B(\La_N)\overset{\infty}{\oplus}B(\La_N)\) defined as \[
    \Tilde{R}_v(z_1,z_2) = \left(v(\alpha Q(v^*z_1) + (1-\alpha) \sigma(Q(\sigma(v)^*z_2))), \sigma(v)(\alpha \sigma(Q(v^*z_1)) + (1-\alpha) Q(\sigma(v)^*z_2)) \right)\] is positive. Note the map $Q : B(\La_N)\to B(\La_N)$ is contractive and $\sigma : B(\La_N)\to B(\La_N)$ is isometric. It implies that $\Tilde{R}_v$ is unital and contractive, and therefore positive. To check that $\Tilde{R}_v$ is unital, the two key points are: (1) $v\in \F_N$, so $v^*\in\F_N$, and $Q(v^*) = v^*$; (2) $\sigma(\F_N)\subset \F_N,$ and $\sigma$ is adjoint preserving.
The contractive projection onto \(Y_v\) is the composition of positive and contractive projections $D$, $\nu$ and $R_v$, so this subspace is positively 1-complemented.

\subsection{Type 6.} Consider the last type, which is very similar to the previous one. Let $X\subset S^p(H)$ be a non-degenerate spinorial space of odd dimension $2N+1\geq 5$. Assume that it is positively 1-complemented. Note that $1\in \E_{2N}^p$, so the subspace $\E_{2N}^p$ of $S^p(\La_N)$  is non-degenerate. We can describe the space $X$ as 
\[X = \{V(z\otimes a)U^*,\quad z\in\E_{2N}^p\},\]
where $a\in S^p(H_1)$ is a positive and injective operator on a Hilbert space $H_1$, and $U,V : \La_N\overset{2}{\otimes} H_1\to H$ are unitary operators.
Using Proposition \ref{propAR}, we have a positive element $x\in X$, such that $\operatorname{s}(x)=1$, that can be written as \(x = V(z_0\otimes a)U^*\), with \(z_0\in\E_{2N}^p\) a bijective operator.
We denote by \(z_0 = u|z_0|\) its polar decomposition, with $u\in B(\La_N)$ a unitary operator. Since $\E_{2N}$ is a $JC^*$-triple, and $z$ is bijective, we obtain that \(u\in\E_{2N}\), by Proposition \ref{propfunctiononJC} as in the type 5 case.
Now, using the identity \(x_0 = |x_0|\), we obtain the following equality \[U(|z_0|\otimes a)U^* = V(z_0\otimes a)U^* = V(u|z_0|\otimes a)U^*,\] from which we deduce \[V(u\otimes 1) = U.\]
Consequently, \[X = \{U(u^*y\otimes a)U^*,\quad y\in \E_{2N}^p\}\simp \{x\otimes a,\quad x\in u^*\E_{2N}^p\}.\] Moreover, $v=u^*\in \E_{2N}$.

Conversely, assume that $X$ is positively equivalent to \(\{x\otimes a,\quad x\in v\E_{2N}^p\}\), with $v\in\E_{2N}$ a unitary operator, and $a\in S^p(H_1)$ a positive and injective operator. We may assume that $\|a\|_p=1$. We write the expression of the projection onto this subspace.
Denote by $R : S^p(\La_N)\to S^p(\La_N)$ the contractive projection onto $\E_{2N}^p$. By \cite[Lemma  5.2, p.865]{MRR}, this map has the following property \[\|R : S^q(\La_N)\to S^q(\La_N)\| = 1,\] for any $1\leq q\leq \infty$. We will apply it with $q=p$ and $q=\infty$. 
Then, denote $R_v : S^p(\La_N)\to S^p(\La_N)$ the contractive projection onto $v\E_{2N}^p$. Then $R_v$ is given by \[R_v(x) = vR(v^*x).\] Since $v$ is a unitary operator of $\E_{2N}$, we also have \[\|R_v : B(\La_N)\to B(\La_N)\| = 1,\] and \(R_v(1) = vR(v^*) = vv^* =1.\)
We deduce that $R_v$ is a positive map. Then, define $\varphi_a : S^p(H_1)\to \Cdb$ as follows \[\varphi_a(x) = \Tr(a^{p-1}x).\] The contractive projection onto \(\{x\otimes a,\quad x\in v\E_{2N}^p\}\subset S^p(\La_N\overset{2}{\otimes}H_1)\) is the map \[ P: x \mapsto (R_v\otimes \varphi_a)(x)\otimes a.\]
Since the maps $\varphi_a$ and $R_v$ are positive and contractive, it follows from Remark \ref{Totimesphi} (2) that $R_v\otimes\varphi_a$ is a positive map, and therefore $P$ is positive as well.

\subsection{The case $p=1$.}

For the case $p=1$, the difference comes from the fact that for a 1-complemented subspace $X$ of $S^1(H)$, the contractive projection onto $X$ is not necessarly unique. However, according to \cite[the remark after Theorem 2.16, p.36]{AF2}, there is a unique contractive projection $P$ onto $X$ satisfying \begin{equation}\label{unique_proj_S1}
    P = P\left(\text{s}_\ell(X) \cdot \text{s}_r(X)\right).
\end{equation} Hence, if $X$ is indecomposable and non-degenerate, there exists a unique contractive projection $P : S^1(H)\to S^1(H)$ onto $X$. It follows that the previous study of each type remains valid in the case $p=1$, and thus Theorem \ref{THM1} holds for $p=1$.



\begin{remark}
A subspace $X$ of the space $S^1(H)$ may be positively 1-complemented even though not all contractive projections onto it are positive. For example, let \(a = \left(\begin{array}{ccc}
     1& 0&0 \\
     0& 0&0\\
     0&0&0
\end{array}\right) \in S^1(\ell^2_3) = S^1_3,\) and \(X:= \text{span}(a).\) Let $v = \left(\begin{array}{ccc}
     1& 0&0 \\
     0&0&0\\
     0&0&0
\end{array}\right)$, then the map $ Q: S^1_3\to S^1_3 $ defined by \[Q(x) = \Tr(xv)a\] is a contractive and positive projection onto $X$.
However, let \[w = \left(\begin{array}{ccc}
     1&0&0 \\
     0& 0&0\\
     0&1&0
\end{array}\right),\] the map $P_{w} : S^1_3\to S^1_3 $ defined by \[P_{w}(x) = \Tr(xw)a\] is also a contractive projection onto $X$, but not positive.
\end{remark}

\section{Proof of Theorem \ref{THM1}, for $p=2$.}\label{section}

In this section, we prove Theorem \ref{THM1} in the case $p=2$. Note that only the direct implication needs to be established, as the converse was already addressed in Section \ref{sectionProofpneq2} and remains valid for $p=2$. Throughout this section, $X$ denotes a non-degenerate, positively 1-complemented, and indecomposable subspace of $S^2(H)$. We denote by $P : S^2(H)\to S^2(H)$ the contractive positive projection onto $X$.
The strategy to obtain the desired result is to associate to $P$ certain positive and contractive projections $V_p$ on $S^p(H)$ for $p\neq 2$, then apply the case $p\neq 2$ of Theorem \ref{THM1} to determine their ranges, and finally return to the subspace $X$. The key point is to show that the map $V_p$ is contractive, this relies on the case $p=1$ and an interpolation argument.
\subsection{Construction of positive contractive projection on $S^p(H)$, $p\neq 2$}
Some preliminary results, stated in \cite[Lemma 2.3, p.10]{AR}, are reviewed here. We provide the proof of the second point of Lemma \ref{densite}, as it illustrates how to approach an element of $\text{s}(k)S^p(H)\text{s}(k)$ by elements of $k^{\frac{1}{2}}B(H)k^{\frac{1}{2}}$ in a concrete way.

\begin{lemma}\label{densite}
Let $1\leq p <\infty$, and let $k$ be a positive element of $S^p(H)$.

\begin{itemize}
    \item The map $\operatorname{s}(k)B(H)\operatorname{s}(k)\to S^p(H)$, $x \mapsto k^{\frac{1}{2}}xk^{\frac{1}{2}}$ is injective.
    \item The subspace $k^{\frac{1}{2}}B(H)k^{\frac{1}{2}}$ is dense in $\operatorname{s}(k)S^p(H)\operatorname{s}(k)$ for the topology of $S^p(H)$.
\end{itemize}
\end{lemma}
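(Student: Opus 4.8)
The plan is to reduce everything to the spectral decomposition of the positive compact operator $k$. Write $k=\sum_{n\geq 1}\lambda_n e_n$, where $\lambda_n>0$ and the $e_n$ are pairwise orthogonal finite-rank spectral projections with $\sum_n e_n=\text{s}(k)$, so that $k^{\frac12}=\sum_n \lambda_n^{\frac12}e_n$. The two elementary facts I would isolate first are that $\Ran(\text{s}(k))$ is the closure of $\Ran(k^{\frac12})$ and that $k^{\frac12}$ is injective on $\Ran(\text{s}(k))$.

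For the first point, suppose $x\in\text{s}(k)B(H)\text{s}(k)$ satisfies $k^{\frac12}xk^{\frac12}=0$. I would fix $h\in H$ and set $y:=xk^{\frac12}h$; since $x=\text{s}(k)x$, the vector $y$ lies in $\Ran(\text{s}(k))$, while $k^{\frac12}y=0$ forces $y\in\ker k^{\frac12}=\Ran(1-\text{s}(k))$. Hence $y=0$, so $xk^{\frac12}=0$. As $\Ran(k^{\frac12})$ is dense in $\Ran(\text{s}(k))$ and $x$ is bounded, this yields $x\,\text{s}(k)=0$; combined with $x=x\,\text{s}(k)$ we conclude $x=0$, proving injectivity.

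For the density statement I would construct an explicit approximating sequence. Let $p_N:=\sum_{n=1}^N e_n$, an increasing sequence of finite-rank projections with $p_N\to\text{s}(k)$ in the strong operator topology, and let $b_N:=\sum_{n=1}^N\lambda_n^{-\frac12}e_n\in B(H)$ be the truncated inverse of $k^{\frac12}$, so that $k^{\frac12}b_N=b_Nk^{\frac12}=p_N$. Given $z\in\text{s}(k)S^p(H)\text{s}(k)$, set $x_N:=b_Nzb_N$. Since $b_N=p_Nb_Np_N$ and $p_N\leq \text{s}(k)$, the operator $x_N$ lies in $\text{s}(k)B(H)\text{s}(k)$, and a direct computation gives $k^{\frac12}x_Nk^{\frac12}=p_Nzp_N\in k^{\frac12}B(H)k^{\frac12}$. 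It then remains to show $p_Nzp_N\to z$ in the $S^p$-norm.

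Here I would use $z=\text{s}(k)z\,\text{s}(k)$ to write $p_Nzp_N-z=(p_N-\text{s}(k))z\,p_N+\text{s}(k)z(p_N-\text{s}(k))$ and estimate each summand separately. The main (and only genuinely analytic) obstacle is the fact that multiplying a fixed element of $S^p(H)$ on either side by a uniformly bounded sequence converging to $0$ strongly produces a sequence converging to $0$ in $S^p$-norm; this is where $p<\infty$ enters, and it is established by approximating $z$ by finite-rank operators, on which strong convergence upgrades to norm convergence, together with the uniform boundedness of $(p_N)$. Granting this, both terms above tend to $0$ in $S^p(H)$, so $x_N$ furnishes the desired approximation of $z$ and the density follows.
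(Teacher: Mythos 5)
Your proof is correct and follows essentially the same route as the paper: diagonalize the positive compact operator $k$, form the truncated inverses $b_N$ of $k^{\frac12}$ supported on the first spectral projections, and realize $p_Nzp_N=k^{\frac12}b_Nzb_Nk^{\frac12}$ as the approximating element. You additionally write out the injectivity argument and the justification that $p_Nzp_N\to z$ in $S^p$-norm, both of which the paper leaves implicit, and both of your arguments are sound.
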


\begin{proof} By convenience, we assume that $k$ has an infinite rank (the finite rank case being easier).
The compact operator $k$ is adjoint preserving, so we can find an orthonormal basis $(e_n)_{n\in\mathbb N}$ of the space $\text{Ker}(k)^\perp = \overline{\text{Ran}(k)}$ consisting of eigenvectors of $k$. For every $n\in\Ndb$, let $\lambda_n\in\Rdb_+^*$ such that \(ke_n = \lambda_n e_n\). We may assume that the sequence $(\lambda_n)_n$ is non-increasing. Let $E_{nn}$ be the operator defined as \[E_{nn}(e_m) = \delta_{n,m}e_n,~~n,m\in\Ndb \quad \text{ and }\quad {E_{n,n}}_{|_{\text{Ker}(k)}} = 0,\]
then we can write $k$ as \[k = \sum_{n\in\mathbb N} \lambda_n E_{nn}.\] We denote by $p_N$ the orthogonal projection onto $\text{span}\{e_1,\ldots ,e_N\}$, and we define two operators as follows, \[k_N :=p_Nkp_N = \sum_{1\leq n\leq N} \lambda_n E_{nn}, \quad \text{ and }\quad g_N :=\sum_{1\leq n\leq N} \lambda_n^{-1} E_{nn}.\]
For any element $y\in \text{s}(k)S^p(H)\text{s}(k)$, we have \[p_Nyp_N\xrightarrow[N\to +\infty]{S^p}y,\]
and it is straightforward to check that \(k^{\frac{1}{2}}g_N^{\frac{1}{2}} = g_N^{\frac{1}{2}}k^{\frac{1}{2}} = p_N.\) Then we have \(p_Nyp_N = k^{\frac{1}{2}}\underbrace{g_N^{\frac{1}{2}} yg_N^{\frac{1}{2}}}_{\in B(H)}k^{\frac{1}{2}} \in k^{\frac{1}{2}}B(H)k^{\frac{1}{2}}.\) The result follows.

\end{proof}

From now on, we fix $h$ a positive and injective element of $X$, with $\|h\|_2 =1$. The existence of $h$ is ensured by Proposition \ref{propAR}. Note that here, $s(h)=1.$ We define the operator $V_1 : hB(H)h\to S^1(H)$ by \begin{equation}\label{defV1}
    V_1(hxh) = h^{\frac{1}{2}}P(h^{\frac{1}{2}}xh^{\frac{1}{2}})h^{\frac{1}{2}}.
\end{equation}
We see that the map $V_1$ is well defined, using Lemma \ref{densite} with $k=h^2 \in S^1(H)$. Indeed, we have $s(k)=s(h)=1$, and for $x,y\in B(H)$, we have \[hxh=hyh\underset{\text{Lemma }\ref{densite} }{\Rightarrow} x=y \Rightarrow h^{\frac{1}{2}}P(h^{\frac{1}{2}}xh^{\frac{1}{2}})h^{\frac{1}{2}} =h^{\frac{1}{2}}P(h^{\frac{1}{2}}yh^{\frac{1}{2}})h^{\frac{1}{2}}.\] Note that the implication \(hxh = hyh \Rightarrow x=y\) may be obtained directly. Since $h$ is injective, \(hxh=hyh\Rightarrow xh=yh,\) and $h$ has a dense range, hence \(xh=yh \Rightarrow x=y.\)
As in the previous lemma, we diagonalize the operator $h$, by writing it as \[h = \sum_{n\in\mathbb N}\alpha_n E_{nn},\]
with $(e_n)_n$ an orthonormal basis of $H$, and $(\alpha_n)_n\subset \Rdb^*_+$, such that $he_n = \alpha_n e_n.$ We denote by $p_N$ the orthogonal projection onto $\text{Span}\{e_1,\ldots ,e_N\}$. Then the proof of Lemma \ref{densite} shows that the subspace $\bigcup\limits_{N\in\mathbb N} p_NB(H)p_N$ of $hB(H)h$ is dense in $S^1(H)$.
We now use this result to extend $V_1$ to a bounded operator on $S^1(H)$. 

\begin{proposition}
The operator $V_1$ defined in (\ref{defV1}) extends to a contractive and positive projection from $S^1(H)$ into itself.
\end{proposition}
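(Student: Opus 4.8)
The plan is to exploit that, for $p=2$, the contractive projection $P$ onto $X$ is an orthogonal projection on the Hilbert space $S^2(H)$. Being a norm-one idempotent on a Hilbert space, $P$ is self-adjoint for the inner product $\langle a,b\rangle = tr(a^*b)$; combined with the fact that $P$ is $*$-preserving (being positive), this yields the symmetry $tr(P(c)d) = tr(cP(d))$ for all $c,d\in S^2(H)$. Together with $P(h)=h$ (since $h\in X = \Ran(P)$), this is the single structural input driving everything. Positivity of $V_1$ is then immediate: if $hxh\geq 0$ then, $h$ being positive injective with dense range, $x\geq 0$, hence $h^{\frac12}xh^{\frac12}\geq 0$, hence $P(h^{\frac12}xh^{\frac12})\geq 0$ by positivity of $P$, and finally $V_1(hxh)=h^{\frac12}P(h^{\frac12}xh^{\frac12})h^{\frac12}\geq 0$.

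Next I would show that $V_1$ preserves the trace on positive elements. For $x\geq 0$, a short cyclicity-and-symmetry computation gives $tr(V_1(hxh)) = tr\big(P(h^{\frac12}xh^{\frac12})\,h\big) = tr\big(h^{\frac12}xh^{\frac12}\,P(h)\big) = tr(h^{\frac12}xh^{\frac12}\,h) = tr(hxh)$, using $tr(P(c)d)=tr(cP(d))$ and $P(h)=h$. Since $V_1(hxh)$ and $hxh$ are both positive, this says $\|V_1(hxh)\|_1 = \|hxh\|_1$, i.e. $V_1$ is isometric on the positive cone of its domain. By the density statement in Lemma \ref{densite} (applied with $k=h^2$), positive elements of $S^1(H)$ are $S^1$-limits of positive elements $p_N(\cdot)p_N$ of the domain, so once $V_1$ is known to be bounded both positivity and trace preservation pass to the closure and hold on all of $S^1(H)$ (trace preservation extending from the cone to $S^1(H)$ by linearity).

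The main difficulty is upgrading ``isometric on positives'' to a genuine contraction on all of $S^1(H)$, since positivity gives no direct control on non-self-adjoint elements. I would proceed in two stages. First, to merely produce a bounded extension, I split an element $y$ of the finite-dimensional corner $M_N := p_N B(H) p_N$ (a $*$-subalgebra dense in $S^1(H)$ by Lemma \ref{densite}) into its self-adjoint and anti-self-adjoint parts, and each of those into positive and negative parts, all of which remain in $M_N\subset hB(H)h$; the triangle inequality together with the isometry-on-positives yields $\|V_1(y)\|_1\leq 2\|y\|_1$, so $V_1$ extends to a bounded operator on $S^1(H)$. Second, I would obtain the sharp constant through the Banach-space adjoint $V_1^*: B(H)\to B(H)$ (recall $S^1(H)^* = B(H)$). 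Since $V_1$ is positive, $V_1^*$ is positive; since $V_1$ preserves the trace, $\langle y,V_1^*(1)\rangle = tr(V_1(y)) = tr(y) = \langle y,1\rangle$ for every $y$, so $V_1^*(1)=1$. A positive unital map on the $C^*$-algebra $B(H)$ has norm $\|V_1^*(1)\| = 1$, whence $\|V_1\| = \|V_1^*\| = 1$.

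Finally, for idempotency I would record the intertwining relation between $V_1$ and $P$. Let $\Psi: S^2(H)\to S^1(H)$, $\Psi(z) = h^{\frac12}zh^{\frac12}$, which is bounded since $\|h^{\frac12}zh^{\frac12}\|_1 \leq \|h\|_2\,\|z\|_2 = \|z\|_2$. On the subspace $h^{\frac12}B(H)h^{\frac12}$, dense in $S^2(H)$ by Lemma \ref{densite} (with $k=h$), the definition of $V_1$ reads exactly $V_1\circ\Psi = \Psi\circ P$; as both sides are bounded from $S^2(H)$ to $S^1(H)$, this identity holds on all of $S^2(H)$. Then $V_1\circ V_1\circ\Psi = V_1\circ\Psi\circ P = \Psi\circ P\circ P = \Psi\circ P = V_1\circ\Psi$, so $V_1^2$ and $V_1$ agree on $\Ran(\Psi)\supseteq hB(H)h$, which is dense in $S^1(H)$; by continuity $V_1^2=V_1$. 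This completes the proof that the extension of $V_1$ is a positive contractive projection of $S^1(H)$ into itself.
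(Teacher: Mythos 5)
Your proof is correct and follows essentially the same route as the paper: isometry on the positive cone via $P(h)=h$ together with the trace-symmetry $tr(P(c)d)=tr(cP(d))$, a crude a priori bound obtained by decomposing elements of $p_NB(H)p_N$ into positive parts, and the adjoint/Russo--Dye argument to upgrade to norm one. The only genuine variations are your idempotency step via the bounded intertwiner $\Psi(z)=h^{\frac12}zh^{\frac12}$ satisfying $V_1\circ\Psi=\Psi\circ P$ on a dense subspace of $S^2(H)$ (a clean alternative to the paper's direct computation with the compressions $p_N(\cdot)p_N$), and your explicit justification of why $P$ is self-adjoint for the trace pairing (norm-one idempotent on a Hilbert space plus $*$-preservation), which the paper invokes without comment.
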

\begin{proof} We first show that $V_1$ is bounded on a dense subspace of the space $S^1(H)$ to obtain a bounded extension of $V_1$ on $S^1(H)$. The map $V_1$ is a positive projection since $P$ is. Then, to show that $V_1$ is a contraction, we show that its adjoint $V_1^* : B(H)\to B(H)$ is unital.

We assume that $h$ has an infinite rank. We first show that $V_1$ is bounded on $p_NB(H)p_N \subset S^1(H)$, uniformly in $N\in\Ndb$.
Note that if $x$ is in $B(H)_+$, the set of all positive elements of $B(H)$, then $V_1(hxh) =  h^{\frac{1}{2}}P(h^{\frac{1}{2}}xh^{\frac{1}{2}})h^{\frac{1}{2}}$ is positive, since the map $P$ is assumed to be positive. In this case, we can compute \begin{equation*}\begin{split}
    \|V_1(hxh)\|_1 &= \| h^{\frac{1}{2}}P(h^{\frac{1}{2}}xh^{\frac{1}{2}})h^{\frac{1}{2}}\|_1\\ &= \Tr( h^{\frac{1}{2}}P(h^{\frac{1}{2}}xh^{\frac{1}{2}})h^{\frac{1}{2}})\\ &= \Tr( hP(h^{\frac{1}{2}}xh^{\frac{1}{2}}))\\ &= \Tr(P(h)h^{\frac{1}{2}}xh^{\frac{1}{2}})\\ &= \Tr(hh^{\frac{1}{2}}xh^{\frac{1}{2}}) = \Tr(hxh) = \|hxh\|_1.
\end{split}
\end{equation*}
The crucial points here are the equality $P(h)=h$, and the fact that $P : S^2(H)\to S^2(H)$ is an orthogonal, hence self-adjoint and positive projection. Any operator $y \in p_NB(H)p_N$ may be decomposed as a combination of positive operators \[y = y_1 - y_2 + i(y_3-y_4),\] with $y_i \in p_NB(H)p_N$, $y_i\geq 0$ and $\|y_i\|_1\leq \|y\|$, for $i=1,\ldots ,4$.
As in the proof of Lemma \ref{densite}, we have \begin{equation}\label{etapecalc1}
    y_i = p_Ny_ip_N = h\underbrace{g_Ny_ig_N}_{:=x_i\in B(H)_+}h,
\end{equation} with $g_N = \sum\limits_{n=1}^N \alpha_n^{-1}E_{nn}$. We obtain \[\|V_1(y)\|_1 \leq \sum_{i=1}^4 \|V_1(y_i)\|_1 = \sum_{i=1}^4 \|y_i\|_1 \leq 4\|y\|_1.\]
This shows that $V_1$ is bounded on $\bigcup\limits_{N\in\mathbb N} p_NB(H)p_N $, so it extends to a bounded map on $S^1(H)$, with norm less than 4.
We can now show that $V_1$ is a positive projection. If $y\in S^1(H)_+$, then arguing as in (\ref{etapecalc1}), we see that $p_Nyp_N\subset hB(H)_+h$ for all $N\geq 1$. Then the sequence of positive elements $(V_1(p_Nyp_N))_N$ converges to $V_1(y)$, which is therefore positive.
To check the equality \(V_1^2 = V_1\), we only need to prove it on the subspace $hB(H)h$. For $x\in B(H)$, let \[y = V_1(hxh) = h^{\frac{1}{2}}P(h^{\frac{1}{2}}xh^{\frac{1}{2}})h^{\frac{1}{2}}.\]
On the one hand, $V_1(p_Nyp_N)\xrightarrow[N\to \infty]{S^1}V_1(y)$, on the other hand, we can compute \begin{equation*}
    \begin{split}
        V_1(p_Nyp_N) &=  h^{\frac{1}{2}}P(h^{\frac{1}{2}}g_Nyg_Nh^{\frac{1}{2}})h^{\frac{1}{2}} \\ &=h^{\frac{1}{2}}P(h^{\frac{1}{2}}g_Nh^{\frac{1}{2}}P(h^{\frac{1}{2}}xh^{\frac{1}{2}})h^{\frac{1}{2}}g_Nh^{\frac{1}{2}})h^{\frac{1}{2}} \\ & = h^{\frac{1}{2}}P(p_NP(h^{\frac{1}{2}}xh^{\frac{1}{2}})p_N)h^{\frac{1}{2}} \xrightarrow[N\to\infty]{S^1} h^{\frac{1}{2}}P(P(h^{\frac{1}{2}}xh^{\frac{1}{2}}))h^{\frac{1}{2}} = h^{\frac{1}{2}}P(h^{\frac{1}{2}}xh^{\frac{1}{2}})h^{\frac{1}{2}}.
    \end{split}
\end{equation*}
We deduce the equality \(V_1(y)=y\), and hence \(V_1^2 = V_1 \text{ on }hB(H)h,\) as expected.

To show that $V_1$ is a contraction, we use duality brackets between $S^1(H)$ and $B(H)$, denoted by $\langle \cdot , \cdot \rangle$. Let $U = V_1^* : B(H)\to B(H)$. This is a positive bounded map, hence \(\|V_1\| = \|U\| = \|U(1)\|\) by Russo-Dye Theorem, see \cite[Corollary 2.9, p.15]{Paulsen}. For every $x\in B(H)$, using again the identities \(P(h)=h\) and the fact that $P$ is self-adjoint, we have \begin{equation*}\begin{split}
    \langle U(1), hxh\rangle &= \langle 1, V_1(hxh)\rangle\\ &= \langle 1, h^{\frac{1}{2}}P(h^{\frac{1}{2}}xh^{\frac{1}{2}})h^{\frac{1}{2}}\rangle\\ &= \Tr(h^{\frac{1}{2}}P(h^{\frac{1}{2}}xh^{\frac{1}{2}})h^{\frac{1}{2}})\\ &= \Tr(hP(h^{\frac{1}{2}}xh^{\frac{1}{2}})))\\ &= \Tr(P(h)h^{\frac{1}{2}}xh^{\frac{1}{2}}) = \Tr(hxh) = \langle 1, hxh\rangle.
\end{split}
\end{equation*}
We conclude that \(U(1) = 1\), and hence that \(\|V_1\| = 1\). 

Note that if $h$ has finite rank (that is, $H$ is of finite dimension), we may directly define \[V_1(x) = h^{-\frac{1}{2}}P(h^{-\frac{1}{2}}xh^{\frac{1}{2}})h^{\frac{1}{2}}.\] Then to prove that $V_1$ is a contractive and positive projection on $S^1(H)$ we proceed in the same way, without approximation of $h^{-1}$.
\end{proof}
Using Lemma \ref{densite} again, we now define $V_p : h^{\frac{1}{p}}B(H)h^{\frac{1}{p}}\to S^p(H)$ for $1\leq p\leq 2$, by \begin{equation}\label{defVp}
    V_p(h^{\frac{1}{p}}xh^{\frac{1}{p}}) = h^{\frac{1}{p} - \frac{1}{2}}P(h^{ \frac{1}{2}}xh^{ \frac{1}{2}})h^{\frac{1}{p} - \frac{1}{2}}.
\end{equation}

\begin{proposition}
The operator $V_p$ defined in (\ref{defVp}), for $1\leq p\leq 2$, extends to a positive contractive projection from $S^p(H)$ into itself.
\end{proposition}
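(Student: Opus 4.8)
The plan is to deduce the contractivity of $V_p$ by complex (Stein) interpolation between the two endpoints $p=1$ and $p=2$, where it is already available: at $p=2$ one has $T_0=P$, which is contractive on $S^2(H)$, and at $p=1$ one has the projection $V_1$ constructed above, contractive on $S^1(H)$. First I would rewrite the defining formula in a symmetric form. Substituting $y=h^{1/p}xh^{1/p}$ and setting $c=\frac1p-\frac12\in[0,\frac12]$, the formula (\ref{defVp}) becomes $V_p(y)=h^{c}P(h^{-c}yh^{-c})h^{c}$, understood on the dense subspace $\mathcal D:=\bigcup_N p_NB(H)p_N$ from the proof of Lemma \ref{densite}, on which $h^{-c}$ acts as a bounded (finite-rank-supported) operator since $h$ is injective. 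On $\mathcal D$ I would then introduce, for $z$ in the closed strip $\{0\le\operatorname{Re}z\le1\}$, the family $T_z(y):=h^{z/2}P(h^{-z/2}yh^{-z/2})h^{z/2}$, so that $T_0=P$, $T_1=V_1$, and more generally $T_\theta=V_{p_\theta}$ with $\frac1{p_\theta}=\frac{1+\theta}2$, i.e. $p_\theta=\frac2{1+\theta}$; thus a given $p\in[1,2]$ corresponds to $\theta=\frac2p-1\in[0,1]$.

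Next I would establish the two endpoint bounds, in both cases reducing to an already-proved contractivity via conjugation by the unitary $w:=h^{it/2}$ (which satisfies $w^*=h^{-it/2}$ and commutes with every power of $h$). On the line $\operatorname{Re}z=0$ one gets $T_{it}(y)=wP(w^{*}yw^{*})w$, and since multiplication by unitaries preserves the $S^2$-norm and $P$ is contractive on $S^2$, this gives $\|T_{it}(y)\|_2\le\|y\|_2$. On the line $\operatorname{Re}z=1$, commuting $w$ past $h^{\pm1/2}$ and using $h^{(1+it)/2}h^{-1/2}=h^{it/2}$ together with the definition (\ref{defV1}) of $V_1$, a short computation gives $T_{1+it}(y)=wV_1(w^{*}yw^{*})w$, whence $\|T_{1+it}(y)\|_1\le\|w^{*}yw^{*}\|_1=\|y\|_1$ by the contractivity of $V_1$ on $S^1$. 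Since for finite-rank $y\in\mathcal D$ the entries of $h^{-z/2}yh^{-z/2}$ and of $h^{z/2}(\cdot)h^{z/2}$ are entire in $z$ and stay bounded on the strip (only finitely many, strictly positive, eigenvalues of $h$ intervene), the map $z\mapsto T_z(y)$ is an admissible analytic family, and the Stein interpolation theorem for the Schatten scale $[S^1,S^2]_\theta=S^{p_\theta}$ yields $\|T_\theta(y)\|_{p_\theta}\le\|y\|_{p_\theta}$ for all $y\in\mathcal D$. As $\mathcal D$ is dense in $S^{p}(H)$, the operator $V_p=T_\theta$ extends to a contraction on $S^p(H)$.

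Finally, positivity and the projection property would follow the template of the $V_1$ proof rather than the interpolation. For positivity, if $y\ge0$ then $p_Nyp_N\to y$ in $S^p$ and $p_Nyp_N=h^{1/p}(g_N'yg_N')h^{1/p}$ with $g_N'yg_N'\ge0$, where $g_N'=\sum_{n\le N}\alpha_n^{-1/p}E_{nn}$; hence $V_p(p_Nyp_N)=h^{1/p-1/2}P\!\left(h^{1/2}(g_N'yg_N')h^{1/2}\right)h^{1/p-1/2}\ge0$ because $P$ is positive and $h^{1/p-1/2}\ge0$, and $V_p(y)=\lim_N V_p(p_Nyp_N)\ge0$ by closedness of the cone. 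For the idempotence, I would check $V_p^2=V_p$ on $\mathcal D$ by the same telescoping of the $h^{\pm1/2}$ factors as for $V_1$, using $P^2=P$ and the $g_N$-approximation to handle the unbounded inverse, then extend by continuity. I expect the main obstacle to be the rigorous verification of the admissibility hypotheses of the Stein interpolation theorem (analyticity, boundary continuity, and the growth condition) in the presence of the unbounded $h^{-z/2}$; this is exactly what restricting to the dense domain $\mathcal D$ is designed to overcome, since there $h^{-z/2}$ is bounded and $T_z$ is manifestly entire with strip-bounded growth.
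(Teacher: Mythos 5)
Your proposal is essentially the paper's own proof: the same Stein interpolation between the $S^2$-contractivity of $P$ on the line $\operatorname{Re}z=0$ and the $S^1$-contractivity of $V_1$ on the line $\operatorname{Re}z=1$, with the same unitary conjugation by imaginary powers of $h$ to get the boundary bounds, and the same $p_N$-approximation arguments for positivity and idempotence. The one point to tighten is your admissibility claim: for $y\in p_NB(H)p_N$ the \emph{inner} factor $h^{-z/2}yh^{-z/2}$ indeed involves only finitely many eigenvalues of $h$, but the outer factor $h^{z/2}P(\cdot)h^{z/2}$ does not (since $P(h^{-z/2}yh^{-z/2})$ has no finite support, and $z\mapsto h^{z/2}$ is only strongly, not norm, continuous at $\operatorname{Re}z=0$); the paper sidesteps this by also compressing the output, showing that $x\mapsto p_mV_p(x)p_m$, written with the genuinely entire family $h_m^{z/2}(\cdot)h_m^{z/2}$ where $h_m=p_mhp_m$, is contractive for every $m$, and then letting $m\to\infty$.
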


\begin{proof}

To check that $V_p$ extends to a contraction on $S^p(H)$, we use Stein's interpolation theorem, refer e.g. to \cite{Stein}. It suffices to show that, for any $N,m\in\Ndb$, the map $T_{p,m,N} : (p_NB(H)p_N, \|.\|_p) \to  (p_mB(H)p_m, \|.\|_p)$ defined by \[T_{p,m,N}(x) = p_mV_p(x)p_m = h_m^{\frac{1}{p}-\frac{1}{2}}P(g_N^{\frac{1}{p}-\frac{1}{2}}xg_N^{\frac{1}{p}-\frac{1}{2}})h_m^{\frac{1}{p}-\frac{1}{2}}\] is a contraction. 

Fix $N,m\in \Ndb$, and let $H_N := p_N(H)$, $H_m := p_m(H)$. We have natural identifications \(B(H_N) \simeq p_NB(H)p_N\) and $B(H_m) \simeq p_mB(H)p_m$. Define the strip \(S = \{z\in\Cdb,~0\leq Re(z) \leq 1 \}\) and for any $z\in S$, define the map $T(z) : B(H_N)\to B(H_m)$ by \[[T(z)](x) = h_m^{\frac{z}{2}}P(g_N^{\frac{z}{2}} x g_N^{\frac{z}{2}}) h_m^{\frac{z}{2}}.\] The maps $z\mapsto h_m^{\frac{z}{2}}\in B(H_m)$ and $z\mapsto g_N^{\frac{z}{2}}\in B(H_N)$ are analytic on $\Cdb$. It implies the two following facts: \begin{enumerate}
    \item For all $x\in B(H_N)$, the map $z\mapsto [T(z)](x)$ is continuous on $S$ and analytic on $\overset{\circ}{S}$.
    \item For all $x\in B(H_N)$, the maps \[s\in\Rdb\mapsto [T(is)](x) \in S^2(H_m) \text{ and  } s\in\Rdb\mapsto [T(1+is)](x) \in S^1(H_m)\] are continuous.
    \end{enumerate}
Now, define \[M_0 := \sup_{s\in\mathbb R}\|T(is) : S^2(H_N)\to S^2(H_m)\|,\text{ and } M_1:= \sup_{s\in\mathbb R}\|T(1+is) : S^1(H_N)\to S^1(H_m)\|.\] We show that \(M_0\leq 1,\) and \(M_1\leq 1.\)
Let $s\in\Rdb$ and $x\in B(H_N)$. Since $P$ is contractive, we have \begin{equation*}
\begin{split}
    \|[T(is)](x)\|_2 &= \|h_m^{\frac{is}{2}}P(g_N^{\frac{is}{2}} x g_N^{\frac{is}{2}})h_m^{\frac{is}{2}}\|_2\\ &\leq \|h_m^{\frac{is}{2}}\|_\infty^2 \|P(g_N^{\frac{is}{2}} x g_N^{\frac{is}{2}})\|_2\\ &\leq \|g_N^{\frac{is}{2}} x g_N^{\frac{is}{2}}\|_2 \leq \|g_N^{\frac{is}{2}}\|_\infty^2 \|x\|_2 \leq \|x\|_2.
\end{split}
\end{equation*}
Since $x = p_Nxp_N$, and $p_N= h^{\frac{1}{2}}g_N^{\frac{1}{2}} =  g_N^{\frac{1}{2}}h^{\frac{1}{2}}$, using the definition of the map $V_1$, we have \begin{equation*}
    \begin{split}
        h_m^{\frac{1}{2}}P(g_N^{\frac{1+is}{2}} x g_N^{\frac{1+is}{2}})h_m^{\frac{1}{2}} &= p_m h^{\frac{1}{2}}P(h^{\frac{1}{2}}g_N^{1+\frac{is}{2}}xg_N^{1+\frac{is}{2}}h^{\frac{1}{2}})h^{\frac{1}{2}}p_m\\ &= p_mV_1(hg_N^{1+\frac{is}{2}}xg_N^{1+\frac{is}{2}}h)p_m\\ &= p_mV_1(g_N^{\frac{is}{2}}xg_N^{\frac{is}{2}})p_m.
    \end{split}
\end{equation*}
Then, using the fact that $V_1$ is a contraction on $S^1(H)$, we obtain  \begin{equation*}
\begin{split}
    \|[T(1+is)](x)\|_1 &= \|h_m^{\frac{1+is}{2}}P(g_N^{\frac{1+is}{2}} x g_N^{\frac{1+is}{2}})h_m^{\frac{1+is}{2}}\|_1\\ &\leq \|h_m^{\frac{is}{2}}\|_\infty^2 \|h_m^{\frac{1}{2}}P(g_N^{\frac{1+is}{2}} x g_N^{\frac{1+is}{2}})h_m^{\frac{1}{2}}\|_1\\  &\leq \|p_mV_1(g_N^{\frac{is}{2}}xg_N^{\frac{is}{2}})p_m\|_1\\ &\leq \|g_N^{\frac{is}{2}}xg_N^{\frac{is}{2}}\|_1\\ &\leq  \|g_N^{\frac{is}{2}}\|_\infty^2 \|x\|_1 \leq \|x\|_1.
\end{split}
\end{equation*}
Consequently, by Stein's interpolation theorem, we obtain that for every $1\leq p\leq 2$ with $\frac{1}{p} = \frac{1-\theta}{2} + \theta$, the map $T(\theta) = T_{p,m,N} : S^p(H_N)\to S^p(H_m)$ is a contraction. 
So for every $N,m\in \Ndb$, every $x\in p_NB(H)p_N$, \[\|p_mV_p(x)p_m\|_p \leq \|x\|_p.\]
Since $p_mV_p(x)p_m \xrightarrow[m\to\infty]{S^p}V_p(x)$, we deduce that $V_p$ is a contraction on $\bigcup\limits_{N\in\mathbb N} p_NB(H)p_N$ and hence extends to a contraction on $S^p(H)$. 

Moreover, using similar arguments than for $V_1$, we obtain that $V_p$ is a positive projection.

\end{proof}

\begin{remark}
The idea behind the construction of the map $V_p$ is inspired by \cite[Theorem 5.1 and Section 7]{AR}, where some contractive projections $\mathbb E$ and $\mathbb{E}_p$ are introduced. The maps $V_\infty$ and $V_p$ defined in this paper can be seen as an analogue of $\mathbb E$ and $\mathbb{E}_p$ respectively, although the construction is simpler in the setting of Schatten spaces.
\end{remark}

For $2\leq q\leq \infty$, define the positive and contractive projection $V_q$ as follows: \[V_q:= V_p^* : S^q(H)\to S^q(H),\text{ with }1\leq p\leq 2,~~\frac{1}{p}+ \frac{1}{q}=1.\] The rest of this subsection aims to link the subspaces $\Ran(V_p)$ and $\Ran(V_q)$, for $1<p<\infty$, $\frac{1}{p}+\frac{1}{q}=1$. The identities (\ref{defVp}) and (\ref{defVq}) enable us to obtain information on $X=\Ran(P)$.

Denote \(\alpha:= \frac{1}{2}-\frac{1}{q} = \frac{1}{p} - \frac{1}{2}\geq 0\). It is straightforward to check that $V_q$ is the unique bounded map on $S^q(H)$ satisfying the following identity: for every $x\in B(H)$,
\begin{equation}\label{defVq}
     h^\alpha V_q(h^{\frac{1}{q}}xh^\frac{1}{q})h^\alpha = P(h^\frac{1}{2}xh^\frac{1}{2}).
\end{equation}
The link between a contractive projection on Schatten spaces and its adjoint has been studied in \cite[Section 1 and Section 2]{AF3}. We recall here some important results from there.

\begin{definition}
Let $1<p,q<\infty$ with $\frac{1}{p}+\frac{1}{q}=1$. We define the map $N_p : S^p(H)\to S^q(H)$ by \(N_p(0)=0\), and for any non-zero operator with polar decomposition $x = u|x| \in S^p(H)$, \[N_p(x) = \dfrac{|x|^{p-1}u^*}{\|x\|^{p-2}_p}.\]
\end{definition}

For $x\in S^p(H)\setminus\{0\}$, $N_p(x)$ is the unique element of $S^q(H)$ for which \[\|N_p(x)\|_q = \|x\|_p,\quad \text{ and}\quad \langle x, N_p(x)\rangle = \|x\|^2_p.\] The map $N_p :S^p(H)\to S^q(H)$ is a bijection, with $N_q$ serving as its inverse.
Note that for $x\in S^p(H)$, we have \(\text{s}_\ell(x) = \text{s}_r(N_p(x))\), and \(\text{s}_r(x) = \text{s}_\ell(N_p(x))\). Combining with \cite[Proposition 1.1, p.8 and Proposition 2.1, p.18]{AF3},  we obtain the following proposition.

\begin{proposition}\label{link_P_P*}
Let $1<p,q<\infty$ with $\frac{1}{p}+\frac{1}{q}=1$, and $Q : S^p(H)\to S^p(H)$ a contractive projection.
Then $Q^* : S^q(H)\to S^q(H)$ is a contractive projection, and we have 
\begin{enumerate}
    \item \(\text{Ran}(Q) = N_p(\text{Ran}(Q^*))\).
    \item $\Ran(Q)$ is indecomposable if, and only if, $\Ran(Q^*)$ is.
    \item \(\operatorname{s}_\ell(Q) = \operatorname{s}_r(Q^*),\) and \(\operatorname{s}_r(Q) = \operatorname{s}_\ell(Q^*)\).
\end{enumerate}
\end{proposition}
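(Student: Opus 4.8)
The plan is to handle the three assertions in turn, noting that the first and third are short duality computations, while the equivalence of indecomposability is where the genuine work—and the appeal to \cite{AF3}—lies. Since $1<p,q<\infty$, the spaces $S^p(H)$ and $S^q(H)$ are reflexive and mutually dual, so the Banach-space adjoint $Q^*$ is a well-defined map on $S^q(H)$. That $Q^*$ is a contractive projection is then immediate: $(Q^*)^2=(Q^2)^*=Q^*$ shows idempotency, and $\|Q^*\|=\|Q\|=1$.

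For the range identity in (1), I would use the characterization of $N_p$ recalled just above the statement: for $x\neq 0$, the functional $N_p(x)$ is the \emph{unique} element of $S^q(H)$ with $\|N_p(x)\|_q=\|x\|_p$ and $\langle x,N_p(x)\rangle=\|x\|_p^2$. Fix $x\in\Ran(Q)\setminus\{0\}$ and put $f=N_p(x)$. Since $Qx=x$, the adjoint relation gives $\langle Q^*f,x\rangle=\langle f,Qx\rangle=\langle f,x\rangle=\|x\|_p^2$, while contractivity gives $\|Q^*f\|_q\le\|f\|_q=\|x\|_p$. The chain $\|x\|_p^2=|\langle Q^*f,x\rangle|\le\|Q^*f\|_q\,\|x\|_p\le\|x\|_p^2$ must then consist of equalities, so $Q^*f$ satisfies both defining conditions of $N_p(x)$; by uniqueness $Q^*f=f$, i.e. $N_p(x)\in\Ran(Q^*)$. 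Applying the same argument to the contractive projection $Q^*$ on $S^q(H)$ (whose adjoint is $Q$ by reflexivity), together with $N_q=N_p^{-1}$, yields the reverse inclusion. Hence $N_p$ restricts to a bijection $\Ran(Q)\to\Ran(Q^*)$ with inverse $N_q$, which is exactly the content of (1).

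Assertion (3) then follows by taking suprema in the support identities $\text{s}_\ell(x)=\text{s}_r(N_p(x))$ and $\text{s}_r(x)=\text{s}_\ell(N_p(x))$ recorded before the statement. Indeed, since $N_p$ is a bijection between the two ranges,
\[
\text{s}_\ell(Q)=\sup_{x\in\Ran(Q)}\text{s}_\ell(x)=\sup_{x\in\Ran(Q)}\text{s}_r(N_p(x))=\text{s}_r(Q^*),
\]
the last equality reindexing the supremum over $\Ran(Q^*)$, and symmetrically $\text{s}_r(Q)=\text{s}_\ell(Q^*)$.

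The delicate point is (2), which I expect to be the main obstacle. The difficulty is that $N_p$ is \emph{nonlinear}, so a decomposition $\Ran(Q)=X_1\overset{p}{\oplus}X_2$ into operator-disjoint subspaces cannot simply be pushed forward by applying $N_p$ blockwise—the sets $N_p(X_i)$ need not even be linear. The route around this is to recall that operator-disjointness of $X_1,X_2$ is equivalent to the mutual orthogonality of their left supports and of their right supports; by (3) these orthogonal support projections correspond to orthogonal support projections on the $S^q(H)$ side, and one uses them to cut $\Ran(Q^*)$ into honest operator-disjoint subspaces, checking that these are the ranges of the corresponding compressions of $Q^*$. This is precisely the content of Proposition 2.1 of \cite{AF3}, which I would invoke to conclude that $\Ran(Q)$ admits a nontrivial operator-disjoint decomposition if and only if $\Ran(Q^*)$ does; equivalently, one is indecomposable exactly when the other is. Verifying the compatibility of the support projections with the compressions, rather than the duality bookkeeping in (1) and (3), is where the substance lies.
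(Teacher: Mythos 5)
Your proposal is correct, and it is in fact more self-contained than what the paper does: the paper offers no proof at all here, simply asserting that the proposition follows by combining the support identities for $N_p$ with Propositions 1.1 and 2.1 of \cite{AF3}. Your duality argument for (1) --- showing that $Q^*N_p(x)$ satisfies the two conditions characterizing $N_p(x)$ uniquely, hence $N_p(x)\in\Ran(Q^*)$, and then running the same argument for $Q^*$ using reflexivity and $N_q=N_p^{-1}$ --- is the standard extremality proof and is exactly what underlies the cited result; your derivation of (3) by taking suprema of the pointwise identities $\text{s}_\ell(x)=\text{s}_r(N_p(x))$ over the bijection is likewise correct. (Note in passing that the statement as printed, $\Ran(Q)=N_p(\Ran(Q^*))$, is a type mismatch since $N_p$ maps $S^p(H)$ to $S^q(H)$; your reading, that $N_p$ restricts to a bijection $\Ran(Q)\to\Ran(Q^*)$ with inverse $N_q$, is the intended one.) For (2) you correctly identify the nonlinearity of $N_p$ as the obstacle to pushing a decomposition forward blockwise, and your proposed remedy --- characterizing operator-disjointness via orthogonality of left and right support projections and transporting these through the pointwise support identities --- is the right idea; since you ultimately defer to Proposition 2.1 of \cite{AF3} for the details, exactly as the paper does, there is no gap relative to the paper's own standard of proof. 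One could add that for disjoint $x_1,x_2$ one has $N_p(x_1+x_2)=\bigl(\|x_1\|_p^{p-2}N_p(x_1)+\|x_2\|_p^{p-2}N_p(x_2)\bigr)/\|x_1+x_2\|_p^{p-2}$, which makes the transfer of the decomposition concrete, but this is not required given the citation.
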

In the sequel, we fix $1<p<2$, and $2<q<\infty$ with $\dfrac{1}{p}+\dfrac{1}{q}=1$. We denote \(X_p:=\Ran(V_p)\), \(X_q:= \Ran(V_q)\). Then using identities (\ref{defVp}) and (\ref{defVq}), we obtain \begin{equation}\label{link_Vp_X_Vq}
    X_p = \overline{h^\alpha X h^\alpha}^{\|\cdot\|_p},\quad\text{and}\quad X = \overline{h^\alpha X_q h^\alpha}^{\|\cdot\|_2}.
\end{equation}

\begin{proposition}\label{XpXq}
With the previous notations, we have \begin{itemize}
    \item $h^{\frac{2}{p}}\in X_p$, and $h^{\frac{2}{q}}\in X_q.$
    \item The subspaces $X_p$ and $X_q$ are non-degenerate.
    \item The subspaces $X_p$ and $X_q$ are indecomposable.
\end{itemize}
\end{proposition}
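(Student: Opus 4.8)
The plan is to dispatch the two membership facts and non-degeneracy by direct computation, and to concentrate the real work on indecomposability.

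For the first item I would exhibit $h^{2/p}$ and $h^{2/q}$ as fixed points of $V_p$ and $V_q$. Taking $x=1$ in the defining formula (\ref{defVp}) and using $P(h)=h$ (which holds since $h\in X=\Ran(P)$) gives $V_p(h^{2/p})=h^{\alpha}P(h)h^{\alpha}=h^{2\alpha+1}=h^{2/p}$, so $h^{2/p}\in X_p$. Likewise, taking $x=1$ in the characterization (\ref{defVq}) yields $h^{\alpha}V_q(h^{2/q})h^{\alpha}=P(h)=h=h^{\alpha}h^{2/q}h^{\alpha}$; since $h$ is injective with dense range, the map $y\mapsto h^{\alpha}yh^{\alpha}$ is injective, whence $V_q(h^{2/q})=h^{2/q}\in X_q$. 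For non-degeneracy, I note that $V_p,V_q$ are positive, hence self-adjoint, so $X_p,X_q$ are self-adjoint subspaces; as $h^{2/p}$ and $h^{2/q}$ are positive and injective with dense range, their supports equal $1$, and therefore $s(X_p)\geq s(h^{2/p})=1$ and $s(X_q)\geq s(h^{2/q})=1$.

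For indecomposability, by Proposition \ref{link_P_P*}(2) it suffices to treat $X_p$, since $X_q=\Ran(V_p^*)$. I would argue by contradiction, assuming $X_p=Y_1\oplus Y_2$ is a nontrivial decomposition into operator-disjoint subspaces, and then produce a forbidden decomposition of $X$. Writing $h^{2/p}=w_1+w_2$ with $w_i\in Y_i$, the $w_i$ are disjoint; repeating the argument of Lemma \ref{decpos1compl} (comparing $h^{2/p}=|h^{2/p}|$ with $|w_1|+|w_2|$ via (\ref{relation_norm_disjoint})) shows each $w_i$ is positive. Their supports $g_i:=s(w_i)$ are then orthogonal projections with $g_1+g_2=s(h^{2/p})=1$, and a support-counting argument (using $s(w_i)\leq s_\ell(Y_i)$, $s_\ell(Y_1)+s_\ell(Y_2)=s_\ell(X_p)=1$, and likewise for right supports) forces $s_\ell(Y_i)=s_r(Y_i)=g_i$ and $w_i=g_ih^{2/p}=h^{2/p}g_i$, so $g:=g_1$ commutes with $h$. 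It follows that every element of $X_p$ is block-diagonal for $g$, that is $g\xi(1-g)=(1-g)\xi g=0$ for all $\xi\in X_p$. Using $h^{\alpha}Xh^{\alpha}\subseteq X_p$ from (\ref{link_Vp_X_Vq}), the injectivity of $y\mapsto h^{\alpha}yh^{\alpha}$, and the fact that $g$ commutes with $h^{\alpha}$, this block-diagonality is then inherited by every element of $X$.

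The main obstacle is to upgrade this block-diagonality into the genuine invariance $gXg\subseteq X$ required to split $X$, and I would obtain it by showing that the compression $C_g:\xi\mapsto g\xi g$ commutes with $V_p$. Indeed, $C_gV_pC_g+C_{1-g}V_pC_{1-g}$ is a contractive projection onto $X_p$: it fixes $X_p$ and maps into it because $X_p$ is $g$-block-diagonal, and it is contractive because the pinching $\xi\mapsto g\xi g+(1-g)\xi(1-g)$ is a contraction on $S^p(H)$ while its two corner ranges are operator-disjoint, so the $p$-norms add by (\ref{relation_norm_disjoint}). By the uniqueness of the contractive projection onto a $1$-complemented subspace of $S^p(H)$ for $1<p<\infty$ (recalled in the Introduction), this map equals $V_p$, giving $C_gV_p=V_pC_g$. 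Feeding this into (\ref{defVp}), and using once more that $g$ commutes with every power of $h$ and that $y\mapsto h^{\alpha}yh^{\alpha}$ is injective, I would deduce $C_gP=PC_g$ on the dense subspace $h^{1/2}B(H)h^{1/2}$ of $S^2(H)$ (dense by Lemma \ref{densite} with $k=h$), hence on all of $S^2(H)$. Therefore $gXg=C_gP(S^2(H))=PC_g(S^2(H))\subseteq X$, and similarly $(1-g)X(1-g)\subseteq X$. Combined with the block-diagonality this gives $X=gXg\oplus(1-g)X(1-g)$, an operator-disjoint decomposition whose summands contain $gh\neq 0$ and $(1-g)h\neq 0$ and are thus both nontrivial, contradicting the indecomposability of $X$. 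Hence $X_p$, and with it $X_q$, is indecomposable.
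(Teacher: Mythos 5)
Your proposal is correct, and while the first two items are handled exactly as in the paper (evaluate \eqref{defVp} and \eqref{defVq} at $x=1$, then use that $h^{2/p}$ and $h^{2/q}$ have full support), your argument for indecomposability takes a genuinely different route. The paper works with $X_q$ and transfers a putative decomposition $X_q=Y\oplus Z$ directly down to $X$: writing $h^{2/q}=y+z$ with $y,z$ positive and disjoint, functional calculus gives $h^{\alpha}=y^{\beta}+z^{\beta}$ with $y^{\beta}Z=\{0\}$ and $z^{\beta}Y=\{0\}$, so that $h^{\alpha}X_qh^{\alpha}=y^{\beta}Yy^{\beta}+z^{\beta}Zz^{\beta}$ is an operator-disjoint splitting whose closure decomposes $X$ via \eqref{link_Vp_X_Vq} --- no uniqueness statement and no commutation argument are needed. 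You instead extract the support projection $g=s(w_1)$, show it commutes with $h$, prove that the pinching $C_gV_pC_g+C_{1-g}V_pC_{1-g}$ is a contractive projection onto $X_p$ and invoke the uniqueness of the contractive projection for $1<p<\infty$ to get $C_gV_p=V_pC_g$, and finally push this through \eqref{defVp} and Lemma \ref{densite} to obtain $C_gP=PC_g$ and hence $gXg\subseteq X$. This is longer and leans on the uniqueness result from \cite{AF2}, but it is sound (I checked the contractivity estimate via \eqref{relation_norm_disjoint} and the support-counting step $s_\ell(Y_i)=s_r(Y_i)=g_i$), and it has the merit of producing an explicit ``central'' projection commuting with $P$, which makes the obstruction to decomposability visible. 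One detail you share with the paper and could state explicitly: both $w_1$ and $w_2$ are nonzero, because if $w_2=0$ then $h^{2/p}\in Y_1$ has full support and disjointness forces $Y_2=\{0\}$, contradicting nontriviality of the decomposition.
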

\begin{proof}
The first point follows from the identities (\ref{defVp}), (\ref{defVq}) applied with $x=1$, and the second point from the first one. 
According to Proposition \ref{link_P_P*} (2), $X_p$ is indecomposable if and only if $X_q$ is indecomposable. So for the last point, it suffices to prove that $X_q$ is indecomposable.
Assume that $X_q$ is decomposable, that is, there are two non-trivial operator-disjoint subspaces $Y,Z$ of the space $S^q(H)$ such that \(X_q = Y\oplus Z.\) Let us show that it implies that the space $X$ is decomposable, which is absurd. Since $h^{\frac{2}{q}}\in X_q$ with $\text{s}(h^{\frac{2}{q}}) = \text{s}(X_q) =1$, there is $(y,z)\in Y\times Z$, such that $h^{\frac{2}{q}}=y+z$, and $y,z\neq 0$. Using the positivity of $h^{\frac{2}{q}}$, and the relations \(yz^*=y^*z=0\), we obtain \[h^{\frac{2}{q}} = |y|+|z| = y +z.\] We deduce that $y$ and $z$ are positive, and using continuous functional calculus, we have \[h^\alpha = y^\beta + z^\beta,\quad \text{with }\beta = \frac{\alpha q}{2}>0.\]
Furthermore, since $y$ is disjoint from every element of $Z$, we obtain \(y^nZ = \{0\}\) for every positive integer $n$, so we deduce \[y^\beta Z = \{0\}.\]
Similarly, we have \[z^\beta Y = \{0\}.\]
Hence \[h^\alpha X_q h^\alpha = y^\beta Y y^\beta + z^\beta Z z^\beta,\] and $y^\beta Y y^\beta$ and $z^\beta Z z^\beta$ are two operator-disjoint subspaces of $S^2(H)$.
Then, with relations (\ref{link_Vp_X_Vq}) and (\ref{relation_norm_disjoint}), we obtain \[ X = \overline{y^\beta Y y^\beta}^{\|.\|_2} \overset{2}{\oplus} \overline{z^\beta Z z^\beta}^{\|.\|_2}.\]
We deduce that $X$ is decomposable. A contradiction arises, so $X_q$ must be indecomposable.

\end{proof}


\subsection{Description of the positively 1-complemented subspace $X\subset S^2(H)$} We continue to use the notation defined in the previous subsection. Recall that $X$ is a non-degenerate indecomposable positively 1-complemented subspace of $S^2(H)$. We obtain a description of $X$ by applying the case $q\neq 2$ of Theorem \ref{THM1} to $X_q$, and then using the relationship between $X$ and $X_q$.


We begin with a proposition concerning $JC^*$-triples, which is useful for relating the types of $X_p$ and $X_q$. If $J\subset B(H)$ denotes a $JC^*$-triple, then for $1\leq p<\infty$, set $J_p = J\cap S^p(H)$. Note that if $J$ is weak*-closed, then $J_p\subset S^p(H)$ is closed.

\begin{proposition}\label{Np(JC*)}
Let $J\subset B(H)$ be a $JC^*$-triple, and let $1<p,q<\infty$, with $\frac{1}{p}+\frac{1}{q}=1$.

We have \[N_p(J_p) = \left\{x^*,\quad x\in J_q\right\}.\]
\end{proposition}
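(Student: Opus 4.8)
The plan is to prove the two inclusions separately; the forward inclusion does the real work and the reverse one then follows by an inverse/symmetry argument. The central tool is Proposition \ref{propfunctiononJC}(1): if $x=u|x|$ is the polar decomposition of an element of the $JC^*$-triple $J$ and $f$ is continuous on the spectrum of $|x|$ with $f(0)=0$, then $uf(|x|)\in J$. Applied with $f(t)=t^{p-1}$ (legitimate since $p-1>0$), this is exactly what converts a power of $|x|$ back into a member of $J$. Throughout I will use the arithmetic identities $(p-1)q=p$ and $(p-1)(q-1)=1$, both consequences of $\frac1p+\frac1q=1$, together with the facts (recalled in the excerpt) that $N_p$ is a bijection with inverse $N_q$.

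For the inclusion $N_p(J_p)\subseteq\{x^*: x\in J_q\}$, take a nonzero $x\in J_p$ with polar decomposition $x=u|x|$. By definition $N_p(x)=\|x\|_p^{-(p-2)}|x|^{p-1}u^*$, so $N_p(x)^*=\|x\|_p^{-(p-2)}\,u|x|^{p-1}$. By Proposition \ref{propfunctiononJC}(1), applied to the identity triple-isomorphism of $J$ with $f(t)=t^{p-1}$, the element $u|x|^{p-1}$ lies in $J$; and since $(p-1)q=p$ we have $\||x|^{p-1}\|_q^q=\mathrm{tr}(|x|^p)<\infty$, so $u|x|^{p-1}\in S^q(H)$. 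Hence $N_p(x)^*\in J\cap S^q(H)=J_q$, and therefore $N_p(x)=\bigl(N_p(x)^*\bigr)^*\in\{z^*: z\in J_q\}$.

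For the reverse inclusion, two small observations make it automatic. First, $N_p$ commutes with the adjoint, $N_p(z^*)=N_p(z)^*$: writing $z=u|z|$ one checks $|z^*|=u|z|u^*$ and $z^*=u^*|z^*|$, and the claimed identity is then a direct computation from the definition of $N_p$, using $|z|^{p-1}u^*u=|z|^{p-1}$. Second, $N_q$ is the inverse of $N_p$. Now take $y\in J_q$; the inclusion already proved, with the roles of $p$ and $q$ exchanged, gives $N_q(y)^*\in J_p$. Applying $N_p$ and combining the two observations,
\[
N_p\bigl(N_q(y)^*\bigr)=N_p\bigl(N_q(y)\bigr)^*=y^*,
\]
so $y^*\in N_p(J_p)$. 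This yields $\{x^*:x\in J_q\}\subseteq N_p(J_p)$ and completes the equality.

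The only delicate points are bookkeeping rather than conceptual. One must ensure the supports line up so that $u|x|^{p-1}$ really equals $N_p(x)^*$ up to the scalar (this uses $|x|^{p-1}u^*u=|x|^{p-1}$ and the corresponding identity for $|z^*|$), and one must verify that the normalizing scalars cancel: this is precisely where $(p-1)(q-1)=1$ enters, through the relation $N_q=N_p^{-1}$, since it forces $|x|^{(p-1)(q-1)}=|x|$. The essential structural input is that $J$ need not be self-adjoint, so the forward inclusion cannot be obtained by merely taking adjoints; it genuinely requires the functional-calculus membership supplied by Proposition \ref{propfunctiononJC}.
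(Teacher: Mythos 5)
Your proof is correct and follows essentially the same route as the paper's: the forward inclusion via Proposition \ref{propfunctiononJC} applied to $f(t)=t^{p-1}$, and the reverse inclusion from the facts that $N_p$ commutes with the adjoint and that $N_q$ is its inverse. You simply spell out the details (the membership $u|x|^{p-1}\in S^q(H)$ and the computation $N_p(N_q(y)^*)=y^*$) that the paper leaves implicit.
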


\begin{proof}
If $x\in J_p$ is a non-zero operator with polar decomposition \(x=u|x|\), then by Proposition \ref{propfunctiononJC}, $u|x|^{p-1}\in J$. We deduce the inclusion \[N_p(J_p)\subset\{x^*,\quad x\in J_q\}.\]

The other inclusion follows from the fact that the map $N_p$ is adjoint preserving and bijective, with $N_q$ as its inverse.
\end{proof}

Recall that $X$ is a non-degenerate, positively 1-complemented, and indecomposable subspace of $S^2(H)$.  We fix $1<p<2$ and $2<q<\infty$ such that $\frac{1}{p}+\frac{1}{q}=1$. Then by Proposition \ref{XpXq}, $X_p$ and $X_q$ are non-degenerate, indecomposable, positively 1-complemented subspaces of $S^p(H)$ and $S^q(H)$ respectively.

We will assume that $X_q$ is of one of the types described in Theorem  \ref{THM1}, and examine the implications for $X_p$ and $X$. The following proposition handles the types of symmetric matrices, anti-symmetric matrices and odd dimensional spinorial spaces.

\begin{proposition}\label{fincas126}
Let $K$ be a Hilbert space, and $J\subset B(K)$ be a $JC^*$-triple invariant under $*$ (i.e. for every $t\in J$, $t^*\in J$), that is weak*-closed in $B(H)$. Let $a\in S^q(H_1)$ be a positive injective operator on some Hilbert space $H_1$, with $\|a\|_q=1$ and let $V : K\overset{2}{\otimes}H_1\to H$ be a unitary operator.
Assume that the subspace $X_q$ may be described as follows: \[X_q = \{V(t\otimes a)V^*,\quad t\in J_q\}.\]

Then \[ X_p = \{V(y\otimes a^{q-1})V^*,\quad y\in J_p\}, \quad \text{and}\quad X = \{V(z\otimes a^{\frac{q}{2}})V^*,\quad z\in J_2\},\]
\end{proposition}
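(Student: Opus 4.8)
The two equalities are established separately, beginning with $X_p$. Since $V_q=V_p^{*}$, Proposition \ref{link_P_P*} identifies the ranges of $V_p$ and $V_q$ through the duality map; concretely $X_p=N_q(X_q)$, where $N_q:S^q(H)\to S^p(H)$ is the inverse of $N_p$. It therefore suffices to evaluate $N_q$ on the generators $V(t\otimes a)V^{*}$ of $X_q$. I would first record two covariance properties that are immediate from the definition of $N_q$ via polar decomposition: for a unitary $V$ one has $N_q(VxV^{*})=VN_q(x)V^{*}$, and for $t\in S^q(K)$ and $a$ positive injective with $\|a\|_q=1$ one has $N_q(t\otimes a)=N_q(t)\otimes a^{q-1}$ (the normalisation $\|a\|_q=1$ absorbs the constant, and $(|t|\otimes a)^{q-1}=|t|^{q-1}\otimes a^{q-1}$). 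Combining them gives $N_q(V(t\otimes a)V^{*})=V(N_q(t)\otimes a^{q-1})V^{*}$. Finally, since $J$ is self-adjoint, Proposition \ref{Np(JC*)} (with the roles of $p$ and $q$ exchanged) yields $N_q(J_q)=\{x^{*}: x\in J_p\}=J_p$, and $N_q$ restricts to a bijection $J_q\to J_p$. Letting $y=N_q(t)$ range over $J_p$ then gives $X_p=\{V(y\otimes a^{q-1})V^{*}: y\in J_p\}$.

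For $X$ I would use the identity $X=\overline{h^{\alpha}X_qh^{\alpha}}^{\,\|\cdot\|_2}$ from (\ref{link_Vp_X_Vq}), where $\alpha=\tfrac12-\tfrac1q$. By Proposition \ref{XpXq} the element $h^{2/q}\in X_q$ is positive, injective and of full support, so writing $h^{2/q}=V(t_0\otimes a)V^{*}$ with $t_0\in J_q$ forces $t_0$ to be positive, injective and of dense range. Raising to the power $q/2$ gives $h=V(t_0^{q/2}\otimes a^{q/2})V^{*}$, hence $h^{\alpha}=V(b^{\alpha}\otimes c^{\alpha})V^{*}$ with $b:=t_0^{q/2}$ and $c:=a^{q/2}$. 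Using $c^{\alpha}ac^{\alpha}=a^{q\alpha+1}=a^{q/2}$, a direct computation yields
\[
h^{\alpha}X_qh^{\alpha}=\{V(b^{\alpha}tb^{\alpha}\otimes a^{q/2})V^{*}: t\in J_q\}.
\]
Since $\|a^{q/2}\|_2=\|a\|_q^{q/2}=1$ and $V$ is unitary, the map $z\mapsto V(z\otimes a^{q/2})V^{*}$ is an isometry of $S^2(K)$ into $S^2(H)$, and an isometric embedding intertwines norm-closures; the whole question therefore reduces to the single identity
\[
\overline{\{b^{\alpha}tb^{\alpha}: t\in J_q\}}^{\,\|\cdot\|_2}=J_2 .
\]

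This last identity is the heart of the argument. The inclusion $\subseteq$ is formal: for $t\in J_q$ one writes $b^{\alpha}tb^{\alpha}=\{b^{\alpha},t^{*},b^{\alpha}\}$, which lies in $J$ because $b^{\alpha}\in J$ (continuous functional calculus applied to the positive element $t_0$, Proposition \ref{propfunctiononJC}) and $t^{*}\in J$; Hölder's inequality with $\tfrac{\alpha}{2}+\tfrac1q+\tfrac{\alpha}{2}=\tfrac12$ places it in $S^2$, so $b^{\alpha}tb^{\alpha}\in J_2$, and $J_2$ is closed in $S^2$ because $J$ is weak*-closed. For density I would regularise the unbounded inverse of $b^{\alpha}$: given $z\in J_2$, choose continuous functions $\phi_\varepsilon$ with $\phi_\varepsilon(0)=0$ and $\phi_\varepsilon(\lambda)=\lambda^{-\alpha}$ for $\lambda\ge\varepsilon$, set $g_\varepsilon:=\phi_\varepsilon(b)\in J$, and put $t_\varepsilon:=g_\varepsilon z g_\varepsilon=\{g_\varepsilon,z^{*},g_\varepsilon\}\in J_q$. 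Then $b^{\alpha}t_\varepsilon b^{\alpha}=\chi_\varepsilon(b)\,z\,\chi_\varepsilon(b)$ with $\chi_\varepsilon(\lambda)=\lambda^{\alpha}\phi_\varepsilon(\lambda)$ satisfying $0\le\chi_\varepsilon\le1$ and $\chi_\varepsilon(b)\to 1$ strongly (as $s(b)=1$), whence $\chi_\varepsilon(b)z\chi_\varepsilon(b)\to z$ in $S^2$. This gives the reverse inclusion and hence the identity; transporting back through the isometry produces $X=\{V(z\otimes a^{q/2})V^{*}: z\in J_2\}$.

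The main obstacle is exactly this density step: the approximants of an arbitrary $z\in J_2$ must simultaneously stay inside the triple system $J$ and be honest images $b^{\alpha}tb^{\alpha}$. The mechanism that makes both possible is the stability of the $JC^{*}$-triple $J$ under continuous functional calculus of its positive elements and under the triple product $\{\cdot,\cdot,\cdot\}$, which is what lets the bounded cut-offs $g_\varepsilon=\phi_\varepsilon(b)$ and the sandwiched elements $g_\varepsilon z g_\varepsilon$ be manufactured within $J$; once this is in place, the convergence $\chi_\varepsilon(b)z\chi_\varepsilon(b)\to z$ is a routine Hilbert–Schmidt estimate using $\chi_\varepsilon(b)\to 1$ strongly together with $z\in S^2$.
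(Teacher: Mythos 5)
Your proposal is correct. The first identity is proved exactly as in the paper: both compute $N_q$ on the generators $V(t\otimes a)V^*$, use the covariance of $N_q$ under unitary conjugation and under tensoring with the normalized positive operator $a$, and then invoke Propositions \ref{link_P_P*} and \ref{Np(JC*)} together with the self-adjointness of $J$ to identify $N_q(J_q)$ with $J_p$. For the second identity, your forward inclusion coincides with the paper's (write $h^\alpha$ via $t_0$, use Proposition \ref{propfunctiononJC} to keep $t_0^{q\alpha/2}$ inside $J$, the triple product to keep $b^\alpha t b^\alpha$ inside $J$, H\"older to land in $S^2$, and weak*-closedness of $J$ to close up). Where you genuinely diverge is the reverse inclusion. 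The paper uses the \emph{other} half of (\ref{link_Vp_X_Vq}), namely $X_p=\overline{h^\alpha Xh^\alpha}^{\|\cdot\|_p}$: it shows that for $x=V(z\otimes a^{q/2})V^*$ one has $h^\alpha xh^\alpha\in X_p$ (using the already established description of $X_p$), hence $V_p(h^\alpha xh^\alpha)=h^\alpha xh^\alpha$, and then passes through the finite-rank cut-offs $p_N$ and the intertwining relation $V_p(p_Nh^\alpha xh^\alpha p_N)=h^\alpha P(p_Nxp_N)h^\alpha$ to conclude $P(x)=x$ from the injectivity and dense range of $h^\alpha$. You instead prove directly that $b^\alpha J_qb^\alpha$ is $\|\cdot\|_2$-dense in $J_2$ by regularizing the unbounded inverse of $b^\alpha$ with cut-off functions $\phi_\varepsilon$ vanishing at $0$, which Proposition \ref{propfunctiononJC} and the triple product keep inside $J$; the convergence $\chi_\varepsilon(b)z\chi_\varepsilon(b)\to z$ in $S^2$ is then routine since $\chi_\varepsilon(b)\to 1$ strongly with $\|\chi_\varepsilon(b)\|\le 1$ and $s(b)=1$. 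Your route has the merit of needing only the single identity $X=\overline{h^\alpha X_qh^\alpha}^{\|\cdot\|_2}$ and neither the projection $P$ nor the description of $X_p$ for this step, at the cost of an explicit approximation argument; the paper's route leans on the machinery already built around $V_p$ and $P$ and so is shorter on the page. Both are valid.
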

\begin{proof}
For the first equality, we only need to compute $N_q(X_q)$. We may always assume that $\|a\|_q=1$.

Let $t\in J_q$, and set $x:= V(t\otimes a)V^* \in X_q$. We have \[|x| = V(|t|\otimes a)V^*,\quad \text{and}\quad \|x\|_q=\|t\|_q\|a\|_q = \|t\|_q.\] If $t = u|t|$ is the polar decomposition of $t$, then the polar decomposition of $x$ is \[x= V(u\otimes 1)V^*|x|,\] and we obtain \[N_q(x) = \dfrac{|x|^{q-1}(V(u\otimes 1)V^*)^*}{\|t\|_q^{q-2}} = \dfrac{V(|t|^{q-1}\otimes a^{q-1})(u^*\otimes 1)V^*}{\|t\|_q^{q-2}} = V(N_q(t)\otimes a^{q-1})V^*.\]

Hence \[X_p = \{V(N_q(t)\otimes a^{q-1})V^*,\quad t\in J_q\} = \{V(y\otimes a^{q-1})V^*,\quad y\in J_p\},\] by Propositions \ref{link_P_P*} and \ref{Np(JC*)}.

\medskip

For the second equality, we use (\ref{link_Vp_X_Vq}), applying the two identities separately for each inclusion. First, we may compute the space \(h^\alpha X_q h^\alpha\), with $\alpha = \frac{1}{2}- \frac{1}{q}$. Note that $h^{2/q}\in X_q$, so there exists a positive element $t_0\in J_q$, such that \( h^{2/q} = V(t_0\otimes a)V^*.\) We deduce that \(h^\alpha = V(t_0^{\frac{q\alpha}{2}}\otimes a^{\frac{q\alpha}{2}})V^*,\) and we have \[ h^\alpha X_q h^\alpha = \{V(t_0^{\frac{q\alpha}{2}}yt_0^{\frac{q\alpha}{2}}\otimes a^{1+q\alpha})V^*,\quad y\in J_q\}.\]
Note that \(1+q\alpha = \frac{q}{2}\). Since $t_0^\frac{q\alpha}{2} \in S^{2/\alpha}(H)$, and $\dfrac{1}{2} = \dfrac{\alpha}{2} + \dfrac{1}{q} + \dfrac{\alpha}{2}$, we deduce by Hölder inequality that $t_0^{\frac{q\alpha}{2}}yt^{\frac{q\alpha}{2}} \in S^2(H)$.

In addition, $t_0$ is a positive element of $J$, so by Proposition \ref{propfunctiononJC}, we deduce that $ t_0^{\frac{q\alpha}{2}}\in J$. Then, for every $y\in J_q$ we have $y^*\in J_q$, so the triple product $\{t_0^{\frac{q\alpha}{2}},y^*,t_0^{\frac{q\alpha}{2}}\} = t_0^{\frac{q\alpha}{2}}yt_0^{\frac{q\alpha}{2}}$ remains in $J$.
We deduce the following inclusion \begin{equation}\label{isomJ_2}
    h^\alpha X_q h^\alpha \subset \{V(z\otimes a^{q/2})V^*,\quad z\in J_2\}.
\end{equation}
Note that $J\subset B(H)$ is weak*-closed, hence the subspace $J_2$ of $S^2(H)$ is closed. The space in the right-hand side of (\ref{isomJ_2}) is isometric to $J_2\subset S^2(H)$, so it is closed subspace of $S^2(H)$, and we have \[X = \overline{h^\alpha X_q h^\alpha}^{\|\cdot\|_2} \subset \{V(z\otimes a^{q/2})V^*,\quad z\in J_2\}.\]

\medskip

Conversely, let $z\in J_2$, and consider $x:= V(z\otimes a^{q/2})V^*$. Let us show that \(P(x)=x.\)
For the same reasons as above, we have \(h^\alpha x h^\alpha \in X_p\), and we deduce that \[V_p(h^\alpha x h^\alpha) = h^\alpha x h^\alpha.\]
As in the previous subsection, we denote $p_N$ the orthogonal projection onto $\text{span}\{e_1,\ldots ,e_N\},$ where $(e_N)_N$ is an orthonormal basis resulting from the diagonalization of $h$. Then, we both have \[V_p(p_Nh^\alpha x h^\alpha p_N)\xrightarrow[N\to +\infty]{\|\cdot\|_p}V_p(h^\alpha x h^\alpha) = h^\alpha x h^\alpha,\] and \[V_p(p_Nh^\alpha x h^\alpha p_N) = h^\alpha P(p_Nxp_N)h^\alpha \xrightarrow[N\to +\infty]{\|\cdot\|_p}h^\alpha P(x) h^\alpha.\]
Since $h^\alpha$ is an injective operator with dense range, we deduce the equality \[P(x) =x.\]

\end{proof}


We continue with a proposition that settles the two remaining types.

\begin{proposition}\label{fincas35}
Let $J, J'\subset B(K)$ be unital weak*-closed $JC^*$-triples on a Hilbert space $K$, $\psi : J\to J'$ be a unital triple-isomorphism which restricts to an isometry $\psi: J_p\to J_p'$, let $a_1\in S^p(H_1)$, $a_2\in S^q(H_2)$ be two positive injective operators on Hilbert spaces, and assume that we have an orthogonal decomposition \(H=H_1'\overset{\perp}{\oplus}H_2'\), with two unitaries \(V_i: K\overset{2}{\otimes}H_i\to H_i'\subset H\), for $i=1,2$ such that the space $X_q$ may be described as \[X_q = \left\{V_1(t\otimes a_1)V_1^* + V_2(\psi(t)\otimes a_2)V_2^*,\quad t\in J_q\right\}.\]
Then we have \[X_p = \left\{V_1(y\otimes a_1^{q-1})V_1^* + V_2(\psi(y)\otimes a_2^{q-1})V_2^*,\quad y\in J_p\right\},\] and \[X = \left\{V_1(z\otimes a_1^{q/2})V_1^* + V_2(\psi(z)\otimes a_2^{q/2})V_2^*,\quad z\in J_2\right\}.\]
\end{proposition}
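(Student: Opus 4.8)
The plan is to run the single-block argument of Proposition \ref{fincas126} again, the only new feature being the presence of two summands. Because $\Ran(V_1)=H_1'$ and $\Ran(V_2)=H_2'$ are orthogonal, we have $V_1^*V_2=0=V_2^*V_1$, so the two blocks consist of \emph{disjoint} operators and all products, moduli, polar parts and $S^r$-norms split across them (cf.\ \eqref{relation_norm_disjoint}). The guiding principle is that the second block is always the $\psi$-image of the first, so every computation performed on block $1$ transports to block $2$ by applying $\psi$. Two preliminary points are needed. First, scaling $a_1,a_2$ by a common positive factor leaves all the subspaces in the statement unchanged, so I may normalize $\|a_1\|_q^q+\|a_2\|_q^q=1$. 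Second, although only $S^p$-isometry of $\psi$ is assumed, the fact that $\psi$ is a triple-isomorphism forces it to preserve singular values with multiplicity: applying Proposition \ref{propfunctiononJC} to $uf(|t|)$ gives $\psi(uf(|t|))=wf(|\psi(t)|)$, and combining with the $S^p$-isometry yields $\|f(|t|)\|_p=\|f(|\psi(t)|)\|_p$ for every continuous $f$ with $f(0)=0$, which pins down the eigenvalue list of $|\psi(t)|$. Hence $\psi$ is isometric for every $S^r$-norm, in particular on $J_q$ and $J_2$.

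For the first equality I would use $X_p=N_q(X_q)$ from Proposition \ref{link_P_P*}. Given $t\in J_q$ with polar decomposition $t=u|t|$, set $x=V_1(t\otimes a_1)V_1^*+V_2(\psi(t)\otimes a_2)V_2^*\in X_q$. Disjointness gives $|x|=V_1(|t|\otimes a_1)V_1^*+V_2(|\psi(t)|\otimes a_2)V_2^*$ with partial isometry $V_1(u\otimes 1)V_1^*+V_2(w\otimes 1)V_2^*$, where $\psi(t)=w|\psi(t)|$; the normalization and the $S^q$-isometry of $\psi$ give $\|x\|_q=\|t\|_q$. Expanding $N_q(x)=|x|^{q-1}u_x^*/\|x\|_q^{q-2}$ and splitting across blocks produces $V_1(N_q(t)\otimes a_1^{q-1})V_1^*$ in the first block; for the second block I would use that $\psi$ is self-adjoint (Lemma \ref{JCautoadj}) and commutes with the odd functional calculus (Proposition \ref{propfunctiononJC}) to get $N_q(\psi(t))=\psi(N_q(t))$. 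Thus $N_q(x)=V_1(y\otimes a_1^{q-1})V_1^*+V_2(\psi(y)\otimes a_2^{q-1})V_2^*$ with $y=N_q(t)$. Since $N_q(J_q)=J_p$ by Proposition \ref{Np(JC*)} (using $J=J^*$) and $N_q$ is a bijection, $y$ ranges exactly over $J_p$, giving the claimed form of $X_p$.

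For the description of $X$ I would start from $X=\overline{h^\alpha X_q h^\alpha}^{\,\|\cdot\|_2}$, with $\alpha=\tfrac12-\tfrac1q$ (see \eqref{link_Vp_X_Vq}). By Proposition \ref{XpXq}, $h^{2/q}\in X_q$ is positive with full support, so $h^{2/q}=V_1(t_0\otimes a_1)V_1^*+V_2(\psi(t_0)\otimes a_2)V_2^*$ for some injective $t_0\ge 0$ in $J$; raising to the $(q\alpha/2)$-th power across the disjoint blocks and using $\psi(t_0^{q\alpha/2})=\psi(t_0)^{q\alpha/2}$ (Proposition \ref{propfunctiononJC}) gives $h^\alpha=V_1(t_0^{q\alpha/2}\otimes a_1^{q\alpha/2})V_1^*+V_2(\psi(t_0^{q\alpha/2})\otimes a_2^{q\alpha/2})V_2^*$. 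Conjugating a general element $V_1(y\otimes a_1)V_1^*+V_2(\psi(y)\otimes a_2)V_2^*$ ($y\in J_q$) by $h^\alpha$, each exponent becomes $q\alpha+1=q/2$, the first block becomes $V_1(z\otimes a_1^{q/2})V_1^*$ with $z=t_0^{q\alpha/2}\,y\,t_0^{q\alpha/2}=\{t_0^{q\alpha/2},y^*,t_0^{q\alpha/2}\}\in J$ (in $S^2$ by Hölder), and, since $\psi$ preserves the triple product and is self-adjoint, the second block is exactly $V_2(\psi(z)\otimes a_2^{q/2})V_2^*$. As $\psi$ is $S^2$-isometric and the blocks are disjoint, $z\mapsto V_1(z\otimes a_1^{q/2})V_1^*+V_2(\psi(z)\otimes a_2^{q/2})V_2^*$ is an isometry on $J_2$, whose image is closed (because $J_2=J\cap S^2$ is closed). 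Hence $X$ is contained in $\{V_1(z\otimes a_1^{q/2})V_1^*+V_2(\psi(z)\otimes a_2^{q/2})V_2^*:z\in J_2\}$.

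The reverse inclusion repeats the approximation argument of Proposition \ref{fincas126}. For $z\in J_2$ put $x=V_1(z\otimes a_1^{q/2})V_1^*+V_2(\psi(z)\otimes a_2^{q/2})V_2^*$; conjugating by $h^\alpha$ raises each $a_i$ to the power $q-1$ and replaces $z$ by $t_0^{q\alpha/2}z\,t_0^{q\alpha/2}\in J_p$, so $h^\alpha x h^\alpha\in X_p$ by the first equality and therefore $V_p(h^\alpha x h^\alpha)=h^\alpha x h^\alpha$. Using the definition \eqref{defVp} of $V_p$ together with the spectral projections $p_N$ of $h$ exactly as in Proposition \ref{fincas126}, one has $V_p(p_N h^\alpha x h^\alpha p_N)=h^\alpha P(p_N x p_N)h^\alpha\to h^\alpha P(x)h^\alpha$, while the left-hand side converges to $h^\alpha x h^\alpha$; injectivity of $h^\alpha$ then forces $P(x)=x$, i.e.\ $x\in X$. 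The main obstacle, and the only step with no counterpart in the single-block case, is controlling the second block: everything rests on $\psi$ simultaneously preserving the triple product, being self-adjoint (Lemma \ref{JCautoadj}), commuting with the continuous functional calculus (Proposition \ref{propfunctiononJC}), and being isometric for the $S^q$- and $S^2$-norms, the last of which must first be extracted from the $S^p$-isometry hypothesis via the singular-value argument above.
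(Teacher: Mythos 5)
Your proposal is correct and follows essentially the same route as the paper: the first identity via $X_p=N_q(X_q)$ computed blockwise using $N_q(\psi(t))=\psi(N_q(t))$, and the second via $X=\overline{h^\alpha X_q h^\alpha}^{\|\cdot\|_2}$ with the decomposition of $h^\alpha$ through a positive $t_0\in J_q$, the triple-product transport of conjugation to the second block, and the $p_N$-approximation argument of Proposition \ref{fincas126} for the reverse inclusion. Your singular-value argument upgrading the assumed $S^p$-isometry of $\psi$ to an isometry on $J_q$ and $J_2$ is a worthwhile addition, since the paper uses $\|\psi(t)\|_q=\|t\|_q$ without comment; note also that your formula for $X_p$ with the exponents $a_i^{q-1}$ agrees with the statement, whereas the displayed conclusion of the paper's own computation drops these exponents (an evident typo there).
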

\begin{proof}
First, we determine $N_q(X_q)$. We may always assume $\|a_1\|_q^q + \|a_2\|_q^q = 1$. The computations are very similar to those in the previous proposition.
For $t\in J_q$, let $x = V_1(t\otimes a_1)V_1^* + V_2(\psi(t)\otimes a_2)V_2^*\in X_q$. Then we have \[|x| = V_1(|t|\otimes a_1)V_1^* + V_2(|\psi(t)|\otimes a_2)V_2^*,\] and \begin{equation*}
\begin{split}
    \|x\|_q^q &= \|V_1(t\otimes a_1)V_1^*\|_q^q + \|V_2(\psi(t)\otimes a_2)V_2^*\|_q^q \\ &= \|t\otimes a_1\|_q^q + \|\psi(t)\otimes a_2\|_q^q\\ &= \|t\|_q^q\|a_1\|_q^q + \|\psi(t)\|_q^q\|a_2\|_q^q = \|t\|_q^q,
\end{split}
\end{equation*} using the identity $\|a_1\|_q^q + \|a_2\|_q^q = 1$ and the fact that $\psi$ is an isometry. Denote by \(t=u|t|\) and \(\psi(t) = w|\psi(t)|\) their polar decompositions, then the polar decomposition of $x$ is given by \[x = (V_1(u\otimes 1)V_1^* + V_2(w\otimes 1)V_2^*)|x|.\]
We can compute 
\begin{equation*}
\begin{split}
        N_q(x) &= \dfrac{|x|^{q-1}(V_1(u\otimes 1)V_1^* + V_2(w\otimes 1)V_2^*)^*}{\|x\|_q^{q-2}}\\ &= \dfrac{(V_1(|t|^{q-1}\otimes a_1)V_1^* + V_2(|\psi(t)|^{q-1}\otimes a_2)V_2^*)(V_1(u\otimes 1)V_1^* + V_2(w\otimes 1)V_2^*)^*}{\|t\|_q^{q-2}}\\ &= \dfrac{V_1(|t|^{q-1}u^*\otimes a_1)V_1^* + V_2(|\psi(t)|^{q-1}w^*\otimes a_2)V_2^*}{\|t\|_q^{q-2}}\\  &= V_1(N_q(t)\otimes a_1)V_1^* + V_2(N_q(\psi(t))\otimes a_2)V_2^*.
\end{split}
\end{equation*}
Applying Proposition \ref{propfunctiononJC} with the function $f : s\in\Rdb_+\mapsto s^{q-1}$ and by Proposition \ref{Np(JC*)}, we obtain \begin{equation*}
    \begin{split}
        N_q(\psi(t)) &= \dfrac{|\psi(t)|^{q-1}w^*}{\|\psi(t)\|_q^{q-2}}\\ &= \dfrac{1}{\|t\|_q^{q-2}}(wf(|\psi(t)|))^*\\ &= \dfrac{1}{\|t\|_q^{q-2}} \psi(uf(|t|))^*\\ &= \dfrac{1}{\|t\|_q^{q-2}} \psi(f(|t|)u^*)\\ &= \psi\left(\dfrac{1}{\|t\|_q^{q-2}} |t|^{q-1}u^* \right) = \psi(N_q(t)).
    \end{split}
\end{equation*}
We also used the fact that $N_q$ is adjoint preserving, and the equality \(\|\psi(t)\|_q = \|t\|_q.\) We deduce \begin{equation*}
    \begin{split}
       X_p =  N_q(X_q) &= \{V_1(N_q(t)\otimes a_1)V_1^* + V_2(\psi(N_q(t))\otimes a_2)V_2^*,\quad t\in J_q\}\\ &= \{V_1(y\otimes a_1)V_1^* + V_2(\psi(y)\otimes a_2)V_2^*,\quad y\in J_p\}.
    \end{split}
\end{equation*}

 The second equality may also be proven in a similar way as in the previous proposition. For the first inclusion $\subset$, we need to write the operator $h^\alpha$ as: \[h^\alpha = V_1(t_0^{\frac{\alpha q}{2}}\otimes a_1^{\frac{\alpha q}{2}})V_1^* + V_2(\psi(t_0^{\frac{\alpha q}{2}})\otimes a_2^{\frac{\alpha q}{2}})V_2^*,\] where $t_0$ is a positive element of $J_q$. Note that $J$ and $J'$ are operator systems, and since $\psi : J\to J'$ is a unital contraction, it is a positive map (see \cite[Proposition 2.11, p.16]{Paulsen}). We deduce, using Proposition \ref{propfunctiononJC}, that \(\psi(t_0)^{\frac{\alpha q}{2}} = \psi(t_0^\frac{\alpha q}{2})\) and for all $y\in J_q$, we have $\psi(t_0^{\frac{\alpha q}{2}})\psi(y)\psi(t_0^{\frac{\alpha q}{2}}) = \psi(t_0^{\frac{\alpha q}{2}}yt_0^{\frac{\alpha q}{2}})$. We can show that \[ X\subset \left\{V_1(z\otimes a_1^{q/2})V_1^* + V_2(\psi(z)\otimes a_2^{q/2})V_2^*,\quad z\in J_2\right\}.\]
The converse inclusion $\supset$ is established in the same manner as in Proposition \ref{fincas126}.
\end{proof}

Now, we can prove Theorem \ref{THM1} for $p=2$.
\begin{proof}
Consider $X$ an indecomposable positively 1-complemented subspace of the space $S^2(H)$. As explained just after Theorem \ref{THM1}, we may assume that $X\subset S^2(H)$ is non-degenerate. Then, with the construction of Subsection 5.1, for $1<p<2$ and $2<q<\infty$ such that $\frac{1}{p} + \frac{1}{q} =1$, we obtain two non-degenerate indecomposable positively 1-complemented subspaces \[X_p\subset S^p(H),~X_q\subset S^q(H).\]

Since $1<q\neq 2<\infty$, we can apply Theorem \ref{THM1} to $X_q$. Assume that we are in one of the following case: There exist $a\in S^q(H_1)$ positive injective, and \begin{enumerate}
    \item there exist a countable index set $I$, a unitary operator $O_1\in\S_I$, such that \[X_q\simp O_1\S_I^q\otimes a.\]
    \item There exist a countable index set $I$, a unitary operator $O_2\in\A_I$, such that \[X_q\simp O_2\A_I^q\otimes a.\]
    \item There exist an integer $N\geq 2$, a unitary operator $v\in \E_{2N}$, such that \[X_q\simp v\E_{2N}^q\otimes a.\]
\end{enumerate}
Note that the spaces $O_1\S_I$, $O_2\A_I$ and $v\E_{2N}$ are unital $JC^*$-triples. By Lemma \ref{JCautoadj}, these spaces are invariant under $*$, and we can apply Proposition \ref{fincas126} to them. So, we are in one of these three situations.
\begin{enumerate}
    \item \(X\simp O_1\S_I^2\otimes a^{\frac{q}{2}}.\)
    \item \(X\simp O_2\A_I^2\otimes a^{\frac{q}{2}}.\)
    \item \(X\simp v\E_{2N}^2\otimes a^{\frac{q}{2}}\).
\end{enumerate}
Note that $a^{\frac{q}{2}}$ is a positive injective operator in $S^2(H)$.

Likewise, assume there exist $a_1\in S^q(H_1)$, $a_2\in S^q(H_2)$ two positive injective operators such that we are in one of the following two cases:
\begin{enumerate}[start=4]
    \item there exists a countable index set $I$, such that \[X_q\simp \{(w\otimes a_1, w^\top\otimes a_2),\quad w\in S^q_I\}.\]
    \item There exist $N\geq 2$, and a unitary operator $v\in \F_N$, such that \[X_q \simp \{(vx\otimes a_1, \sigma(v)\sigma(x)\otimes a_2),\quad x\in \F_N^q\} = \{(y\otimes a_1, \sigma(v)\sigma(v^*y)\otimes a_2),\quad y\in v\F_N^q\}.\]
\end{enumerate}
Note that $B(\ell^2_I)$, $v\F_N$ and $\sigma(v)\F_N$ are unital $JC^*$-triples. In addition, the maps $\top : B(\ell^2_I)\to B(\ell^2_I)$ and $\psi :v\F_N\to \sigma(v)\F_N$ defined by \(\psi(y) = \sigma(v)\sigma(v^*y)\) are unital triple-isomorphism and isometries on $S^p_I$ and $v\F_N^p$ respectively. So we can apply Proposition \ref{fincas35}, and we are in one of these situations: 
\begin{enumerate}[start=4]
    \item \(X\simp \{(w\otimes a_1^{\frac{q}{2}}, w^\top\otimes a_2^{\frac{q}{2}}),\quad w\in S^2_I\}\)
    \item \(X\simp \{(vx\otimes a_1^{\frac{q}{2}}, \sigma(v)\sigma(x)\otimes a_2^{\frac{q}{2}}),\quad y\in \F_N^2\},\)
\end{enumerate} with $a_1^{\frac{q}{2}}$, $a_2^{\frac{q}{2}}$ two positive injective operators.

\end{proof}

\section{Appendix: Equivalence in Types 5 and 6}\label{sectionAppendix}
The spaces called spinorial spaces (types 5 and 6) in Theorem \ref{thmAF} are not originally defined in the same way in the memoirs of Arazy and Friedman (see \cite[Section 2.c and 2.d]{AF2} and \cite{AF3}). The purpose of this section is to show that the definition of elementary subspaces of type 5 (respectively type 6) given by Arazy-Friedman is equivalent to the definition of subspaces of type 5 (resp. type 6) adopted in the present article. We begin by defining the type 5 and 6 in the sense of Arazy-Friedman.

We use the same notations as in Section 2, which are used to define Types 4, 5 and 6. Let $N\geq 2$ and let $\La_N^e$ (resp. $\La_N^o$) be the subspace of $\La_N$ spanned by even tensors (resp. odd tensors). In particular, we have \(\La_N = \La_N^e \overset{\perp}{\oplus}\La_N^o.\) For $1\leq k\leq N$, recall that $c_k : \La_N\to \La_N$ denotes the creation operator, defined by $x\mapsto e_k\wedge x$. Let \(x_k : \La_N^e \to \La_N^o\) and \(\Tilde{x}_k : \La_N^e\to \La_N^o\) be the restrictions of $c_k$ and $c_k^*$ to \(\La_N^e\) respectively.
Define \[AH(N) := \text{span}\{x_k,\Tilde{x}_k,\quad 1\leq k\leq N\},\quad BH(N):= \{x^*,\quad x\in AH(N)\}\] and \[ DAH(N) := \text{span}\{x_j,\tilde{x}_j, x_1+\tilde{x}_1,~2\leq j\leq N\}.\] We write $AH(N)^p$ (respectively $BH(N)^p$ and $DAH(N)^p$) to denote the space $AH(N)$ (resp. $BH(N)$ and $DAH(N)$) endowed with the norm of $S^p(\La_N^e, \La_N^o)$ (resp. $S^p(\La_N^o, \La_N^e)$ and $S^p(\La_N^e, \La_N^o)$). Let $T : AH(N)^p\to BH(N)^p$ be the linear map defined by \[T(x_k) = \Tilde{x}_k^*\quad \text{ and }\quad T(\Tilde{x}_k) = x_k^*\quad 1\leq k\leq N.\] This map is a surjective isometry; see \cite[Section 2]{MRR} for more details.

\begin{definition}\label{defAHN AF}
Let $1\leq p<\infty$ and let $H$ be a Hilbert space.
\begin{enumerate}
    \item Let $N\geq 2$. A subspace $X$ of the space $S^p(H)$  is called elementary subspace of type 5 (in the sense of Arazy-Friedman) when $X$  is equivalent to the subspace $$\{(x\otimes a_1, T(x)\otimes a_2),\quad x\in AH(N)^p\},$$ for some operators $a_1\in S^p(H_1)$ and $a_2\in S^p(H_2)$.
    \item Let $N\geq 3$. A subspace $X$ of the space $S^p(H)$  is called elementary subspace of type 6 (in the sense of Arazy-Friedman) when $X$  is equivalent to the subspace $$\{x\otimes a,\quad x\in DAH(N)^p\},$$ for some operator $a\in S^p(H_1)$.
\end{enumerate}
\end{definition}

The following proposition immediately implies that the subspaces of type 5 (resp. type 6) in the sense of Arazy-Friedman coincide with the subspaces of type 5 (resp. type 6) in the sense of the present article.

\begin{proposition}\label{propequivFnAH}Let $1\leq p<\infty$ and let $N\geq 2$. Let $a,a_1\in S^p(H_1)$, let $a_2\in S^p(H_2)$.
\begin{itemize}
    \item The space \(\{(x\otimes a_1, T(x)\otimes a_2),\quad x\in AH(N+1)^p\}\) is equivalent to the spinorial space of even dimension \(\{(x\otimes a_1, \sigma(x)\otimes a_2),\quad x\in\F_N^p\}\).
    \item The space \(\{x\otimes a,\quad x\in DAH(N+1)^p\}\) is equivalent to the spinorial space of odd dimension \(\{x\otimes a,\quad x\in\E_{2N}^p\}\).
\end{itemize}
\end{proposition}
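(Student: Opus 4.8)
The plan is to deduce both equivalences from one structural fact: after identifying $\La_{N+1}$ with $\La_N\overset{2}{\otimes}\Cdb^2$, the spaces $AH(N+1)$ and $\F_N$ (respectively $DAH(N+1)$ and $\E_{2N}$) are two incarnations of the same span of spin operators. The first step is to rewrite $AH(N+1)$ intrinsically: since $x_k+\tilde x_k$ and $(x_k-\tilde x_k)/i$ are exactly the compressions to $S^p(\La_{N+1}^e,\La_{N+1}^o)$ of the spin system $\hat s_{\pm 1},\dots,\hat s_{\pm(N+1)}$ built from the creation operators $c_1,\dots,c_{N+1}$ on $\La_{N+1}$, the space $AH(N+1)$ is precisely the linear span of these $2(N+1)$ compressed spin operators. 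This turns the problem into a comparison between the \emph{compressed} spin system on $\La_{N+1}$ and the \emph{genuine} spin system (\ref{defFn}) on $\La_N$.

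The heart of the argument is the splitting-off of one creation mode. Writing $\La_{N+1}=\La_N\overset{2}{\otimes}\Cdb^2$ via the Jordan--Wigner/Koszul identification, one has $c_k=c_k^{(N)}\otimes 1$ for $1\le k\le N$ and $c_{N+1}=\Gamma_N\otimes c^{(1)}$, where $\Gamma_N$ is the parity (chirality) operator on $\La_N$. Since every $\hat s_j$ reverses parity, it is off-diagonal for the decompositions $\La_{N+1}^{e}=(\La_N^{e}\otimes 1)\oplus(\La_N^{o}\otimes e_{N+1})$ and $\La_{N+1}^{o}=(\La_N^{o}\otimes 1)\oplus(\La_N^{e}\otimes e_{N+1})$. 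I would then introduce the two unitaries $u_e:\La_N\to\La_{N+1}^{e}$ and $u_o:\La_N\to\La_{N+1}^{o}$ sending the even (resp. odd) part of a vector into the $\otimes 1$ (resp. $\otimes e_{N+1}$) slot, and compute $u_o^{*}\hat s_{\pm k}u_e$. A direct calculation gives the spin operators $s_{\pm k}$ on $\La_N$ for $1\le k\le N$, while the split mode $k=N+1$ produces scalar multiples of the identity and of $\Gamma_N$, the latter being a scalar multiple of the top product $s_{-N}s_N\cdots s_{-1}s_1$. Hence the $S^p$-isometry $y\mapsto u_o^{*}yu_e$ carries $AH(N+1)^p$ onto $\F_N^p$ (see (\ref{defFn})), and transporting the grading likewise identifies $DAH(N+1)^p$ with $\E_{2N}^p$ (see (\ref{defE2N})): in the latter case the distinguished mode contributes only the identity, consistently with $\E_{2N}$ having no top element $s_{-N}s_N\cdots s_{-1}s_1$.

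With these isometries I would assemble the two $\sim$-equivalences. Packaging $u_e,u_o$ into block-diagonal unitaries $U,V$ on the relevant direct sums and computing $VYU^{*}$ reduces the first assertion of the Proposition to a single intertwining identity between the companion maps, namely that the map induced on the second block by conjugation matches the Arazy--Friedman map $T$ applied to the first block, for every $z\in\F_N$. On the elements $1$ and $s_j$ this holds at once; the only delicate point, and the main obstacle, is the sign on the top product, where the naive conjugation differs from what $\sigma$ prescribes precisely by the factor $(-1)^N$. This is exactly the discrepancy recorded in the text after (\ref{defsigma}): for odd $N$ the matching is automatic, while for even $N$ one absorbs the offending sign by inserting the $*$-isomorphism $\pi:s_j\mapsto -s_j$ (implemented by conjugation with $\Gamma_N$), using that $\sigma$ agrees with $\pi\tau'=\tau$ on $\F_N$. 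This produces $\sigma$ rather than the bare conjugation and closes the even case.

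Finally, the type $6$ statement is the simpler of the two and follows from the same mode-splitting identification applied to $DAH(N+1)$ and $\E_{2N}$. Here the spaces carry a single operator $a$ and no companion map, so the sign subtlety of the even case does not arise: the equivalence is implemented directly by the single pair of unitaries coming from $u_e,u_o$, and one only checks that $y\mapsto u_o^{*}yu_e$ maps $DAH(N+1)$ onto $\E_{2N}$ (after a harmless parity twist of one unitary to normalise the degree-one images). Throughout, the fact that all identifications are implemented by unitaries guarantees that they are $S^p$-isometric for every $1\le p<\infty$, so that the correspondences respect the norms of $AH(N+1)^p$, $DAH(N+1)^p$, $\F_N^p$ and $\E_{2N}^p$; the remaining verifications are the routine bookkeeping of the scalar phases produced by the split mode.
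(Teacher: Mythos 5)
Your route is genuinely different from the paper's. The paper does not compute anything on $\La_{N+1}$ directly: it imports from \cite{MRR} the injective $*$-homomorphisms $\pi_0,\pi_1:\mathcal{C}_{2N}\to\mathcal{C}_{2N+1}$ and $\pi:\mathcal{C}_{2N+1}\to B(\La_{N+1})$ together with the identities $\pi\pi_0(F_N)=ivAH(N+1)$, $\pi\pi_0(E_{2N})=ivDAH(N+1)$, $\pi\pi_1(F_N)=ivBH(N+1)$ and the intertwining $\pi\pi_1\tilde\sigma(y)=ivT((iv)^*\pi\pi_0(y))$, and then converts these trace-preserving $*$-homomorphisms into isometries $W_0,W_1$ via Lemma \ref{equiv_*isom}. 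Your Jordan--Wigner mode-splitting re-derives by hand what those $*$-homomorphisms encode; it is more self-contained and more explicit, and the core computation is correct: splitting off the last mode, the compressions of $\hat s_{\pm k}$ ($k\le N$) become $s_{\pm k}$ on $\La_N$ and the split mode contributes $1$ and the chirality $\Gamma_N=i^Ns_{-N}s_N\cdots s_{-1}s_1$, so $u_o^*AH(N+1)u_e=\F_N$ on the nose. The price is that the sign bookkeeping which \cite{MRR} has already done for you must now be carried out explicitly, and this is where your sketch is too optimistic.

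Two concrete gaps. First, the $DAH$ case. The distinguished mode of $DAH(N+1)$ carries the combination $x_1+\tilde x_1$, i.e.\ the compression of $\hat s_1=c_1+c_1^*$, and (after relabelling so that this is the split mode) its image under $y\mapsto u_o^*yu_e$ is $\Gamma_N$, not the identity; the identity is produced by the \emph{other} split-mode operator $(c-c^*)/i$, which $DAH(N+1)$ does not contain. So the mode-splitting isometry sends $DAH(N+1)$ onto $\mathrm{Span}\{s_{\pm j},\ \Gamma_N\}$, which is not $\E_{2N}$, and no parity twist of $u_e$ or $u_o$ alone repairs this (conjugating or pre/post-composing with $\Gamma_N$ turns the $s_j$ into $\pm s_j\Gamma_N$ while turning $1$ into $\Gamma_N$, so you trade one defect for another). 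What does work is to multiply on the left by the unitary $\Gamma_N$, obtaining $\mathrm{Span}\{1,\ \Gamma_Ns_{\pm j}\}$ where the $i\Gamma_Ns_{\pm j}$ form a spin system of $2N$ elements, and then invoke uniqueness of the irreducible representation of the Clifford algebra on $2N$ generators to conjugate this spin system onto $(s_{\pm j})$ by a single unitary of $\La_N$. That extra step (or an equivalent one) must appear; it is precisely the content the paper outsources to \cite{MRR}.

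Second, the intertwining of $T$ with $\sigma$. On the generators, $T$ sends each corner $\hat s_{\pm k}|_{\La^e\to\La^o}$ to the opposite corner, i.e.\ to its adjoint, so with the naive choice of second-component unitaries the induced map on $\F_N$ fixes all $s_{\pm k}$ \emph{and} fixes $\Gamma_N$ while sending $1\mapsto -1$; this disagrees with $\sigma$ on \emph{both} $1$ and the top product, for every $N$. Your assertion that the identity and the $s_j$ "hold at once" and that "for odd $N$ the matching is automatic" is therefore not correct as stated (the odd/even dichotomy in the remark after (\ref{defsigma}) concerns the relation between $\sigma$ and the transpose, not this intertwining). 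The discrepancy $\rho\sigma^{-1}=-\mathrm{Ad}(\Gamma_N)$ \emph{can} be absorbed by replacing the second-component unitaries by $u_e\Gamma_N$ and $-u_o\Gamma_N$, so the conclusion survives, but this verification is the actual content of the even-dimensional case and cannot be dismissed as routine phase bookkeeping. With these two points written out, your proof would be a valid and more elementary alternative to the paper's.
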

\begin{remark}
The spaces described in Definition \ref{defAHN AF} are precisely those appearing in the classification of indecomposable 1-complemented subspaces in \cite{AF2} and \cite{AF3}. In the present work, we used a description with the spaces $\F_N$ and $\E_{2N}$, as it is more convenient for the study of the positivity of the contractive projection onto these spaces.
In \cite{MRR}, a link between $AH(N+1)$, $BH(N+1)$ and $\F_N$ is given (as well as between $DAH(N+1)$ and $\E_{2N}$). We briefly review it here, with a slight extension provided by the following lemma.
\end{remark}

We use the following elementary lemma to prove Proposition \ref{propequivFnAH}, and we rely on \cite[Section 5]{MRR}.
 \begin{lemma}\label{equiv_*isom}
Let $K$ be a finite-dimensional Hilbert space, and let $H$ be a Hilbert space. Let $\rho : B(K)\to B(H)$ be an injective $*-$homomorphism that preserves the trace. Then there exists an isometry $U : K\to H$ such that \[\rho(x) = UxU^*,\quad x\in B(K).\]
\end{lemma}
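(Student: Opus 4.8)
The plan is to exploit the simplicity of $B(K)\cong M_n(\Cdb)$ (where $n=\dim K$) together with the trace-preserving hypothesis, which is precisely what forces $\rho$ to be a genuine unitary conjugation rather than an ampliation. First I would fix an orthonormal basis $(e_i)_{1\le i\le n}$ of $K$ and the associated matrix units $E_{ij}\in B(K)$, characterized by $E_{ij}^*=E_{ji}$, $E_{ij}E_{kl}=\delta_{jk}E_{il}$, and $\sum_i E_{ii}=I_K$. Writing $v_{ij}:=\rho(E_{ij})$, the fact that $\rho$ is a $*$-homomorphism transports all these relations into $B(H)$; in particular each $p_i:=v_{ii}$ is an orthogonal projection, and each $v_{i1}$ is a partial isometry with $v_{i1}^*v_{i1}=v_{1i}v_{i1}=p_1$ and $v_{i1}v_{i1}^*=v_{i1}v_{1i}=p_i$.

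The key observation, and the one place where the trace hypothesis enters, is that $\operatorname{tr}_H(p_i)=\operatorname{tr}_H(\rho(E_{ii}))=\operatorname{tr}_K(E_{ii})=1$, so that each $p_i$ is a rank-one projection. Consequently $\Ran(p_1)$ is one-dimensional, and I would choose a unit vector $\xi$ spanning it, then define $U:K\to H$ on the basis by $Ue_i:=v_{i1}\xi$. To see that $U$ is an isometry, I would compute, using $v_{j1}^*v_{i1}=v_{1j}v_{i1}=\delta_{ij}\,p_1$ and $p_1\xi=\xi$, that $\langle Ue_i,Ue_j\rangle=\delta_{ij}$; hence $U$ carries an orthonormal basis of the finite-dimensional space $K$ to an orthonormal system in $H$ and extends to an isometry.

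Finally I would check the conjugation identity. Writing $\eta_i:=v_{i1}\xi$, a unit vector in the line $\Ran(p_i)$, the fact that $v_{1l}=v_{l1}^*$ is the adjoint of the partial isometry sending $\xi$ to $\eta_l$ gives $v_{1l}\zeta=\langle\zeta,\eta_l\rangle\,\xi$ for every $\zeta\in H$, so that $v_{kl}\zeta=v_{k1}v_{1l}\zeta=\langle\zeta,\eta_l\rangle\,\eta_k$ (here rank-one-ness of the $p_i$ is used a second time). On the other hand, the operator $UE_{kl}U^*$ acts by $\zeta\mapsto\langle\zeta,Ue_l\rangle\,Ue_k=\langle\zeta,\eta_l\rangle\,\eta_k$. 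These coincide, so $\rho(E_{kl})=v_{kl}=UE_{kl}U^*$ for all $k,l$, and by linearity $\rho(x)=UxU^*$ on all of $B(K)$.

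Since the argument is elementary, there is no serious obstacle; the only point worth flagging is conceptual rather than technical, namely that the trace-preserving assumption is exactly what rules out higher multiplicity. A general injective $*$-homomorphism of $M_n$ is only unitarily equivalent to the ampliation $x\mapsto x\otimes I_{\mathcal M}$ on $\Cdb^n\otimes\mathcal M$; preservation of the trace collapses the multiplicity space $\mathcal M$ to a line (via $\operatorname{tr}_H(p_i)=1$) and thereby produces a single isometry $U$. Injectivity of $\rho$ plays no independent role here, being already implied by $p_1\neq 0$.
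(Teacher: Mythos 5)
Your proof is correct. Note that the paper itself offers no proof of this lemma (it is introduced as ``the following elementary lemma'' and used directly), so there is nothing to compare against; your argument via matrix units --- using trace preservation to force each $\rho(E_{ii})$ to be a rank-one projection, defining $U$ on an orthonormal basis by $Ue_i=\rho(E_{i1})\xi$, and verifying $\rho(E_{kl})=UE_{kl}U^*$ as rank-one operators --- is the standard one and fills the gap completely. Your closing remark is also accurate: since $B(K)$ is simple, injectivity follows from $\operatorname{tr}_H(\rho(E_{11}))=1\neq 0$, so that hypothesis is indeed redundant given trace preservation.
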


\begin{proof}[Proof of Proposition \ref{propequivFnAH}]
For $1\leq k\leq n$, consider \(w_k = c_k + c_k^* \in B(\La_n)\), and let \(\mathcal{C}_n = C^*\langle w_1, \ldots, w_n\rangle \subset B(\La_n)\) be the $C^*$ algebra generated by the family $(w_1,\ldots, w_n)$. 

Now, let $n=2N$ be an even integer. Define the following two subspaces of $\mathcal{C}_{2N} \subset B(\La_{2N})$: \[E_{2N} := \text{span}\{1,w_1,\ldots, w_{2N}\}, \text{ and } F_N := \text{span}\{1,w_1,\ldots, w_{2N}, w_1\cdots w_{2N}\}.\] Let $\Tilde{\sigma} : F_N\to F_N$ be the linear map defined by \(\Tilde{\sigma}(1)=1,\)  \(\Tilde{\sigma}(w_k) = w_k, \text{ for every }1\leq k\leq 2N,\) and \(\tilde{\sigma}(w_1\cdots w_{2N}) = -w_1\cdots w_{2N}.\) Let $v \in B(\La_{N+1})$ be the unitary operator defined by \[v := c_{N+1} + c_{N+1}^*.\]
According to Section 5 and to the proof of \cite[Theorem 5.11, p.871]{MRR}, there exist injective $*-$homomorphisms \(\pi_0 : \mathcal{C}_{2N}\to \mathcal{C}_{2N+1}\), \(\pi_1 : \mathcal{C}_{2N}\to \mathcal{C}_{2N+1}\), and an injective unital $*$-homomorphism $\pi : \mathcal{C}_{2N+1}\to B(\La_{N+1})$ such that \begin{equation}\label{lienFnAHn}
    \pi\pi_0(F_N) = ivAH(N+1),\quad\pi\pi_0(E_{2N}) = ivDAH(N+1), \text{ and } \pi\pi_1(F_N) = ivBH(N+1).
\end{equation} Moreover, we have \begin{equation}\label{transpositions}
    \pi\pi_1\tilde{\sigma}(y) = iv T((iv)^*\pi\pi_0(y)),\quad y\in F_N.
\end{equation}
Using the elements $s_j$, $\F_N$ and $\E_{2N}$ defined in (\ref{defs_j}), (\ref{defFn}) and (\ref{defE2N}), we apply \cite[Lemma 5.9, p.869]{MRR}. There exists an $*-$isomorphism $\Phi : B(\La_N)\to \mathcal{C}_{2N}$ such that \[\Phi(s_j) = w_{2N-2(j-1)}\quad \text{and}\quad \Phi(s_{-j})= w_{2N+1-2j},\quad 1\leq j\leq N.\] Then \[\Phi(\F_N) = F_N,\quad \Phi(\E_{2N}) = E_{2N}, \quad \text{and }\quad \sigma = \Phi^{-1}\circ\tilde{\sigma}\circ\Phi.\]
Note that $\Phi$ and $\pi$ are unital, and that $\pi_0$ and $\pi_1$ are trace preserving as explained just after \cite[(5.6), p.867]{MRR}, noting that the trace $\Tr$ of the article \cite{MRR} is the normalised trace. We obtain that the $*-$homomorphisms $\pi\pi_0\Phi$ and $\pi\pi_1\Phi$ preserve the trace. We apply Lemma \ref{equiv_*isom}: there exist isometries $W_0,W_1 : \La_N\to \La_{N+1}$ such that \[\pi\pi_0\Phi(x) = W_0xW_0^*,\quad \text{and}\quad \pi\pi_1\Phi(x) = W_1xW_1^*.\]
From this and from equalities (\ref{lienFnAHn}), we deduce \begin{equation}\label{equiv_FnAHn}
    ivAH(N+1) = W_0\F_N W_0^*;~~ ivBH(N+1)= W_1\F_NW_1^*;~~ ivDAH(N+1)= W_0\E_{2N}W_0^*.
\end{equation}
Using (\ref{transpositions}), we deduce that for every $x\in AH(N+1)$, there exists $y\in\F_N$ such that \[T(x) = (iv)^*W_1\sigma(y)W_1^*.\]
Let $x\in AH(N+1)$, and set $z = (x\otimes a_1, T(x)\otimes a_2)$. 
Then, there exists $y\in\F_N$ such that $ivx = W_0yW_0^*$.
We have \begin{equation*}
    \begin{split}
         z = (x\otimes a_1, T(x)\otimes a_2) &= ((iv)^*W_0yW_0^*\otimes a_1, T((iv)^*W_0yW_0^*)\otimes a_2)\\ &=  ((iv)^*W_0yW_0^*\otimes a_1, T((iv)^*\pi\pi_0\Phi(y))\otimes a_2)\\ &= ((iv)^*W_0yW_0^*\otimes a_1, (iv)^*\pi\pi_1(\Phi(\sigma(y)))\otimes a_2)\\ &= ((iv)^*W_0yW_0^*\otimes a_1, (iv)^*W_1\sigma(y)W_1^*\otimes a_2)\\ &= ((iv)^*W_0\otimes 1, (iv)^*W_1\otimes 1)(y\otimes a_1, \sigma(y)\otimes a_2)(W_0\otimes 1, W_1\otimes 1)^*.
    \end{split}
\end{equation*}
Conversely, for any $y\in\F_N$, there exists $x\in AH(N+1)$ such that \[(y\otimes a_1, \sigma(y)\otimes a_2) = ((iv)^*W_0\otimes 1, (iv)^*W_1\otimes 1)^*(x\otimes a_1, T(x)\otimes a_2)(W_0\otimes 1, W_1\otimes 1).\]
We deduce the following equivalence \[\{(x\otimes a_1, T(x)\otimes a_2),\quad x\in AH(N+1)^p\}\sim \{(x\otimes a_1, \sigma(x)\otimes a_2),\quad x\in\F_N^p\}.\]
For the second equivalence the computations are similar.
\end{proof}

\vspace{2cm}
\textbf{Acknowledgment.} The author would like to thank her thesis supervisor, Christian Le Merdy, for his valuable advice and support.

\end{document}